\newtheorem{theorem}{Theorem}[section]
\newtheorem{definition}[theorem]{Definition}
\newtheorem{corollary}[theorem]{Corollary}
\newtheorem{lemma}[theorem]{Lemma}
\newtheorem{proposition}[theorem]{Proposition}
\newtheorem{remark}[theorem]{Remark}
\title{\Huge\sffamily\slshape Ruelle Operator for Continuous Potentials and DLR-Gibbs Measures}
\author{\large
  Leandro Cioletti
  \\[-0.3cm]
  \footnotesize Departamento de Matem\'atica - UnB
  \\[-0.3cm]
  \footnotesize 70910-900, Bras\'ilia, Brazil
  \\[-0.3cm]
  \footnotesize\texttt{cioletti@mat.unb.br}
  \and
  \large
  Artur O. Lopes
  \\[-0.3cm]
  \footnotesize Departamento de Matem\'atica - UFRGS
  \\[-0.3cm]
  \footnotesize 91509-900, Porto Alegre, Brazil
  \\[-0.3cm]
  \footnotesize\texttt{arturoscar.lopes@gmail.com}
  \and
  \large
  Manuel Stadlbauer
  \\[-0.3cm]
  \footnotesize Departamento de Matem\'atica - UFRJ
  \\[-0.3cm]
  \footnotesize 21941-909, Rio de Janeiro, Brazil
  \\[-0.3cm]
  \footnotesize\texttt{manuel@im.ufrj.br}
}
\date{\small\today}
\begin{document}
\makeatletter
\def\blfootnote{\gdef\@thefnmark{}\@footnotetext}
\let\@fnsymbol\@roman
\makeatother

\maketitle
\begin{abstract}
In this work we study the Ruelle Operator associated to
a continuous potential defined on a countable product of a compact  metric space.
We prove a generalization of Bowen's criterion
for the uniqueness of the eigenmeasures. One of the main results of the article is to show that a probability is DLR-Gibbs (associated to
a continuous translation invariant specification), if and only if, is an eigenprobability for the transpose of the Ruelle operator.

Bounded extensions of the Ruelle operator
to the Lebesgue space of integrable functions, with respect to
the eigenmeasures, are studied and the problem of existence of
maximal positive eigenfunctions for them is considered. One of our main
results in this direction is the existence of such positive eigenfunctions
for Bowen's potential in the setting of a compact and metric alphabet.
We also present a version of Dobrushin's Theorem in the setting of Thermodynamic Formalism.
\end{abstract}

\noindent
{\small \textbf{Keywords}:
	Thermodynamic Formalism, Ruelle operator, continuous potentials,
	Eigenfunctions, Equilibrium states, DLR-Gibbs Measures, uncountable alphabet.}
\\
\noindent
{\small \textbf{MSC2010}: 37D35, 28Dxx, 37C30.}
\blfootnote{The authors are partially supported by CNPq-Brazil.}

\section{Introduction}\label{secao-introducao}

The classical Ruelle operator needs no introduction and nowadays
is a key concept of Thermodynamic Formalism. This operator was defined 1968 for infinite
dimension  by David Ruelle in
his seminal paper \cite{MR0234697} and since then has had attracted the attention
of the Dynamical Systems community. Remarkable applications of this
operator to Hyperbolic dynamical systems and Statistical Mechanics
were presented by Ruelle, Sinai and Bowen,
see \cite{MR2423393,MR0234697,MR0399421}, and in the presence of
a conformal measure, the action of the Ruelle operator and the transfer
operator from abstract ergodic theory coincide.
Nowadays transfer operators are present in several applications in pure
and applied mathematics and are fruitfully applied in many areas of active development,
see \cite{MR1793194} for comprehensive overview of the works
before 2000.

The the classical theory, Ruelle's operator is associated with the full shift $\sigma:\Omega \to \Omega$ on
the symbolic space $\Omega\equiv M^{\mathbb{N}}$, for $M=\{1,\ldots,n\}$, and acts on the space of H\"older
continuous functions.
%
In its classical form, the Ruelle operator $\mathscr{L}_{f}$ for a given continuous function $f:\Omega \to \mathbb{R}$
 is defined by $\mathscr{L}_{f}(\varphi)=\psi$,
where  $\psi$  is given by, for any $x\in\Omega$,
\[
\psi(x)
\,=\,
\sum_{y \in\Omega: \,\, \sigma(y)=x}
\!\!\!\!\!
e^{f(y)}\, \varphi(y).
\]
This operator is a useful tool for constructing equilibrium states, which are defined as those probability measures which maximize the variational problem
\begin{align}\label{variational-problem}
\sup_{\mu\in \mathscr{P}_{\sigma}(\Omega,\mathscr{F})}
\{h_{\mu}(\sigma) + \int_{\Omega} f\, d\mu\},
\end{align}
as proposed, e.g., by Ruelle in 1967 (\cite{MR0217610}) and Walters in 1975 (\cite{MR0390180}). In here, $h_{\mu}(\sigma)$ refers to the Kolmogorov-Sinai entropy of $\mu$
and $\mathscr{P}_{\sigma}(\Omega,\mathscr{F})$ to the
set of all $\sigma$-invariant Borel probability measures over $\Omega$. Even though the existence of the solution
to the variational problem can be easily obtained through abstract theorems
of convex analysis, the Ruelle operator approach
provides additional informations like uniqueness or decay of correlations
(see \cite{MR1793194} and \cite{MR1085356}). In the slightly more general situation of an expanding map on a compact and metric space, this approach was successfully implemented around the eighties and nineties for Hölder continuous potentials as well as for more general potentials in Walter's class  $W(\Omega,\sigma)$ or Bowen's class $B(\Omega,\sigma)$, see
\cite{Ruelle:1989,MR1841880,MR0466493,MR1783787,MR2342978}.

However, the setting in the references above require that the number of preimages of each point is finite, which excludes symbolic spaces like $\Omega=(\mathbb{S}^{n})^{\mathbb{N}}$ which are related
to several famous models of Statistical Mechanics.
For example, $n=1$ gives  us the so-called XY model, for $n=3$ we obtain the Heisenberg model and for $n=4$ the toy model for the Higgs sector of the Standard Model,
see \cite{MR595651,MR2864625,MR0269252,MR887102,GriffithsI-JMP,MR1552524,PhysRevLett.20.589}
for more details.

In \cite{MR2864625}  the authors
used the idea of an \textit{a priori} measure $p:\mathscr{B}(\mathbb{S}^1)\to [0,1]$
to circumvent the problem of uncountable alphabets
and developed the theory of the Ruelle operator for H\"older potentials
on $(\mathbb{S}^1)^{\mathbb{N}}$ with the dynamics
given by the left shift map.
This approach also works for H\"older potentials
when the unit circle $\mathbb{S}^1$ is replaced by a more
general compact metric space $M$, but one has to be careful about the choice of the
\textit{a priori} measure, see \cite{MR3377291} for details.

A similar setting is considered in \cite{MU}. The main
difference is that  the dynamics is more general, given by what the authors called a modified
shift, which is a map of the form $x\longmapsto (T(x_1),x_2,x_3,\ldots)$, where $T:M\to M$ is
only assumed to be an endomorphism.  Of course, if $T$ is the identity map we recover the usual
left shift. For this more general dynamical system a Thermodynamic Formalism is
constructed and a transfer operator is defined by using  an a priori measure as here.
Their main uniqueness result is based on the so-called Dini condition, while here we
adapt to the uncountable alphabet setting the classical Walters and Bowen's condition.

In this more general setting, the operator is defined as
\[\mathscr{L}_{f} (\varphi)\,(x) = \int_{M} e^{f(ax)}\, \varphi(ax)\, dp(a),\]
where $ax:=(a,x_1,x_2,\ldots)$.
A full support condition is imposed on the \textit{a priori} measure in \cite{MR3377291}
but this is not a strong restriction since in the majority of the applications, there is a natural
choice for this measure which always satisfies this condition.
For instance, for the classical Ruelle operator $\mathscr{L}_{f}$,
the metric space is the finite set $M=\{1,2,..,n\}$
and one usually considers $p$ is the  counting measure, and if $M$
is a general compact group, the one might consider the Haar measure.


Some of these results have a counterpart when $M$ is not compact. In case of countable alphabets, the applications
are motivated through applications to non-uniformly expanding or hyperbolic dynamical systems, see \cite{Sarig2015} and references therein. On the other hand, the $\phi^4$-model from Statistical Mechanics also motivates the study
of alphabets which are Polish spaces. In this setting, equilibrium states might only exist as positive linear functionals, but for summable Hölder potentials, the Ruelle operator still has a spectral gap (see \cite{CiolettiSilvaStadlbauer:2019}).


We point out that is very common in the literature in Statistical Mechanics that the main concepts are presented via translation invariant specifications (see \cite{MR2807681} and \cite{Cioletti20176139}). The so called DLR probabilities are defined in such way. We will present some new results in Thermodynamic Formalism under such point of view. We consider the case where the alphabet is a compact metric space.

In Statistical Mechanics as well as in Thermodynamic Formalism, existence and multiplicity
of DLR-Gibbs Measures plays an important role (see \cite{Sarig2015}), in particular due to
Dobrushin's interpretation of phase transitions. Even though there is no universal definition
of phase transition, they are nowadays understood as either the existence of more than one DLR-Gibbs measure, more than one
 equilibrium state or more than one eigenprobability for the dual of the Ruelle operator and so on
(see \cite{MR3350377,Cioletti20176139,MR2807681} for more details).

The goal of the present paper is to follow the Ruelle operator approach
for general continuous potentials defined over the infinite cartesian product of a general compact
metric space. A key observation in here is the characterization of extremal DLR-Gibbs measures
by their tail sigma-algebras (Theorem \ref{extreme vs exact}) and that
the set of DLR-Gibbs measures coincides with eigenprobabilities of the
dual of the Ruelle operator (Theorem \ref{mainDLR}). Moreover,
by extending the action of Ruelle's operator to the space of
integrable functions in Section \ref{secao-ruelle-L1},
Theorems \ref{extreme vs exact} and \ref{mainDLR} establish a further
characterization of extremal DLR-Gibbs measures through exactness and,
in particular, through Lin's characterization of exactness (see Corollary \ref{Lin}).

From the viewpoint of Dobrushin's interpretation of phase transitions,
Theorem \ref{mainDLR} allows to employ classical ideias from dynamical systems
in order to study phase transitions for
one-sided one-dimensional translation invariant statistical mechanics systems.
Namely, for potentials
satisfying a condition similar to the one by Bowen, we obtain in Theorem \ref{teo-unicidade-d1}
that there exists a unique DLR-Gibbs measure, and that this measure is exact.
Furthermore, we also develop a $C(\Omega)$-perturbation theory for the Ruelle operator and
present a constructive approach to solve the classical variational problem
for continuous potentials (see Section \ref{secao-convergencia-L1-operador-ruelle}).

Thereafter, we study in  Section \ref{secao-existencia-auto-funcao} the existence of integrable eigenfunctions
of the Ruelle operator for potentials beyond Bowen's class. Under
 a mild hypothesis on the potential and by approximating the potential
 uniformly by potentials $(f_n)$, we prove that $\limsup h_{f_n}$  is
 a non-trivial integrable eigenfunction associated to the maximal eigenvalue.
 An further relevant result in here is that $\limsup_{n\to\infty} \mathscr{L}^n_{f}(1)/\lambda_f^n$
 is an eigenfunction of $\mathscr{L}_{f}$ under fairly general conditions, which implies
 that a potential in the Bowen class admits an eigenfunction which is bounded
 from above and below (see Corollary \ref{cor-autofun-Bowen}).



\section{Preliminaries}\label{secao-preliminares}
Here and subsequently $(M,d)$ denotes a compact metric space endowed with
a Borel probability measure $p$ which is assumed to be fully supported on
$M$. Let $\Omega$ denote the infinite cartesian product $M^{\mathbb{N}}$
and $\mathscr{F}$ be the $\sigma$-algebra generated by its cylinder sets.
We will consider the dynamics on $\Omega$ given by
the left shift map $\sigma:\Omega\to\Omega$
which is defined, as usual, by $\sigma(x_1,x_2,\ldots)=(x_2,x_3,\ldots)$.
We use the notation $C(\Omega)$ for the space of all real continuous
functions on $\Omega$.
When convenient we call an element $f\in C(\Omega)$ a potential and
unless stated otherwise all the potentials are assumed to be  general
continuous functions.
The Ruelle operator associated to the potential $f$ is a mapping
$\mathscr{L}_f:C(\Omega)$  to $C(\Omega)$ that sends
$\varphi\mapsto \mathscr{L}_{f}(\varphi)$ which is defined for each
$x\in\Omega$ by
\begin{align}
\label{def-operador-de-ruelle}
\mathscr{L}_{f}(\varphi)(x)
=
\int_{M}\exp(f(ax))\varphi(ax)\, dp(a),
\ \ \text{where}\ \ ax:=(a,x_1,x_2,\ldots).
\end{align}
Due to compactness of $\Omega$  in the product topology
and the Riesz-Markov theorem we have that $C^*(\Omega)$
is isomorphic to $\mathscr{M}_s(\Omega,\mathscr{F})$,
the space of all signed Radon measures.
Therefore we can define $\mathscr{L}^*_{f}$, the dual of the Ruelle operator,
as the unique continuous map from $\mathscr{M}_s(\Omega,\mathscr{F})$
to itself satisfying for each $\gamma\in\mathscr{M}_s(\Omega,\mathscr{F})$
the following identity
\begin{align}\label{eq-dualidade}
\int_{\Omega} \mathscr{L}_{f}(\varphi)\, d\gamma
=
\int_{\Omega} \varphi\, d[\mathscr{L}^*_{f}\gamma]
\qquad \forall \varphi\in C(\Omega).
\end{align}
It follows from the positivity of $\mathscr{L}_f$ that
the map
$
\gamma
\mapsto
\mathscr{L}^*_{f}(\gamma)/
\mathscr{L}^*_{f}(\gamma)(1)
$
sends the space of all Borel probability measures
$\mathscr{P}(\Omega,\mathscr{F})$ to itself.
Since $\mathscr{P}(\Omega,\mathscr{F})$ is a convex and
compact in the weak topology (which is Hausdorff in this case)
and the mapping
$
\gamma
\mapsto
\mathscr{L}^*_{f}(\gamma)/
\mathscr{L}^*_{f}(\gamma)(1)
$
is continuous, the Schauder-Tychonoff theorem ensures the existence of
at least one Borel probability measure $\nu$ such that
$
\mathscr{L}^*_{f}(\nu) = \mathscr{L}^*_{f}(\nu)(1)\cdot \nu.
$
Notice that this eigenvalue
$
\lambda \equiv \mathscr{L}^*_{f}(\nu)(1)
$
is positive
but strictly speaking it could depend
on the choice of the fixed point
when it is not unique, however in any case such eigenvalues trivially
satisfies $\exp(-\|f\|_{\infty})\leq \lambda \leq \exp(\|f\|_{\infty})$
so we can always work with
\begin{align}
\label{definicao2-auto-valor-do-dual}
\lambda_{f}
=
\sup
  \left\{
	\mathscr{L}^*_{f}(\nu)(1):
	\begin{matrix}
	\nu\in \mathscr{P}(\Omega,\mathscr{F}) \ \text{and}\
	\nu\ \text{is a fixed point for}\
	\\
	\gamma
	\mapsto
	\mathscr{L}^*_{f}(\gamma)/
	\mathscr{L}^*_{f}(\gamma)(1)
	\end{matrix}
  \right\}.
\end{align}
Of course, from the compactness of $\mathscr{P}(\Omega,\mathscr{F})$
and continuity of $\mathscr{L}_{f}^*$, the supremum is attained
and therefore the set defined below is not empty.

\begin{definition}[$\mathscr{G}^{*}(f)$] Let $f$ be a continuous potential
and $\lambda_f$ given by \eqref{definicao2-auto-valor-do-dual}.
We define
\[
\mathscr{G}^{*}(f)
=
\{
	\nu\in\mathscr{P}(\Omega,\mathscr{F}):
	\mathscr{L}^*_f\nu=\lambda_f\nu
\}.
\]

\end{definition}

To study the eigenfunctions of $\mathscr{L}_{f}$,
where $f$ is a general continuous potential,
we will need the following version of the Ruelle-Perron-Frobenius theorem for H\"older potentials (see \cite{MR2864625} and \cite{MR3377291} for the proof).

We consider the metric $d_{\Omega}$ on $\Omega$ given by
$
d_{\Omega} (x,y)
=
\sum_{n=1}^{\infty} 2^{-n}d(x_n,y_n)
$
and for any fixed $0< \gamma\leq 1$ we denote by $C^{\gamma}(\Omega)$
the space of all $\gamma$-H\"older continuous functions,
i.e, the set of all functions $\varphi:\Omega\to\mathbb{R}$ satisfying
\[
\mathrm{Hol}_{\gamma}(\varphi)
=
\sup_{x,y\in\Omega: x\neq y}
\dfrac{|\varphi(x)-\varphi(y)|}{d_{\Omega}(x,y)^{\gamma}}
<+\infty.
\]

\begin{theorem}[Ruelle-Perron-Frobenius]\label{teo-RPF-holder}
Let $(M,d)$ be a compact metric space, $\mu$ a
Borel probability  measure of full support on $M$ and
$f$ be a potential in $C^{\gamma}(\Omega)$,
where $0<\gamma<1$. Then
$\mathscr{L}_f: C^{\gamma}(\Omega) \to C^{\gamma}(\Omega)$
has a simple positive eigenvalue of maximal modulus
$\lambda_f$ and there is a strictly positive function $h_f$
satisfying $\mathscr{L}_{f}(h_f)=\lambda_{f}h_{f}$
and a Borel probability measure $\nu_{f}$ for which
$\mathscr{L}^{*}_f(\nu_f)= \lambda_{f}\nu_{f}$ and
$\mathscr{L}^{*}_f(\nu_f)(1)= \lambda_{f}$.
\end{theorem}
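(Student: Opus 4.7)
The plan is to follow the classical Ruelle-Perron-Fröbenius strategy, adapted to the present setting with a general compact alphabet $M$ and an \textit{a priori} measure $p$. Existence of an eigenmeasure $\nu_f$ with $\mathscr{L}^*_f \nu_f = \lambda_f \nu_f$ and $\lambda_f > 0$ has already been produced by the Schauder-Tychonoff argument recalled in the preliminaries, so the real work is to construct the positive eigenfunction $h_f$ and then to upgrade to simplicity of the eigenvalue and maximality of modulus.

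For the eigenfunction I would work with the normalized operator $\mathscr{L}_g = \lambda_f^{-1}\mathscr{L}_f$ attached to $g = f - \log\lambda_f$, so that $\int \mathscr{L}_g\varphi\, d\nu_f = \int \varphi\, d\nu_f$ and in particular $\int \mathscr{L}_g^n(1)\, d\nu_f = 1$ for every $n$. The key input is a bounded-distortion estimate of Hölder type: for $x,y\in\Omega$ with $d_\Omega(x,y) < 2^{-k}$ and any $a\in M^n$, the points $ax$ and $ay$ agree on their first $n+k$ coordinates, so applying $\sigma^j$ for $0\leq j\leq n-1$ still leaves them coinciding on $n+k-j$ coordinates; summing the Hölder bound on $f$ over these iterates yields
\begin{equation*}
|S_n f(ax) - S_n f(ay)| \;\leq\; \mathrm{Hol}_\gamma(f)\cdot C_\gamma \cdot 2^{-k\gamma} \;\leq\; C'\, d_\Omega(x,y)^\gamma,
\end{equation*}
uniformly in $n$ and in $a$. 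Inserting this into the integral definition of $\mathscr{L}_g^n(1)(x)$ shows that the family $\{\mathscr{L}_g^n(1)\}_{n\geq 1}$ is uniformly bounded and $\gamma$-Hölder equicontinuous, so Arzelà-Ascoli extracts a subsequential uniform limit $h_f \in C^\gamma(\Omega)$. This $h_f$ is non-trivial since $\int h_f\, d\nu_f = 1$, and passing to the limit in $\mathscr{L}_f \mathscr{L}_g^n(1) = \lambda_f \mathscr{L}_g^{n+1}(1)$ yields $\mathscr{L}_f h_f = \lambda_f h_f$. Strict positivity then follows from the pointwise lower bound $\mathscr{L}_g^n(1)(x) \geq \exp(-n\|g\|_\infty)$ combined with the full-support assumption on $p$.

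The step I expect to be the main obstacle is proving that $\lambda_f$ is \emph{simple} and of maximal modulus, rather than merely an eigenvalue obtained by compactness. The clean route is via Birkhoff's theorem on the Hilbert projective metric applied to the cone of strictly positive continuous functions: a uniform refinement of the distortion estimate above shows that for some $n_0$ the operator $\mathscr{L}_g^{n_0}$ maps this cone into a subcone of finite projective diameter, hence acts as a strict contraction in the projective metric. Uniqueness of the positive eigendirection follows, and by duality the same contraction pins down $\nu_f$ uniquely among eigenmeasures. The contraction rate in turn produces the spectral gap that makes $\lambda_f$ the eigenvalue of maximal modulus of $\mathscr{L}_f$ acting on $C^\gamma(\Omega)$.
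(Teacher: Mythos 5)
You should note at the outset that the paper does not actually prove Theorem \ref{teo-RPF-holder}: it is imported verbatim from \cite{MR2864625,MR3377291}, so the only meaningful comparison is with the standard argument of those references, which your outline does follow in broad strokes (distortion estimate for $S_n f$, normalized iterates, projective-metric contraction). Three of your steps, however, have genuine gaps as written. First, Arzel\`a--Ascoli only gives a \emph{subsequential} uniform limit $\mathscr{L}_g^{n_k}(1)\to h_f$, and you cannot ``pass to the limit in $\mathscr{L}_f\mathscr{L}_g^{n}(1)=\lambda_f\mathscr{L}_g^{n+1}(1)$'': the shifted family $\mathscr{L}_g^{n_k+1}(1)$ need not converge to the same limit along that subsequence. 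The standard repair is to apply Arzel\`a--Ascoli to the Ces\`aro averages $h_N=N^{-1}\sum_{n=0}^{N-1}\mathscr{L}_g^{n}(1)$, for which $\|\mathscr{L}_g h_N-h_N\|_\infty=N^{-1}\|\mathscr{L}_g^{N}(1)-1\|_\infty\to 0$; alternatively the cone contraction gives true (not subsequential) convergence, but then it must be established \emph{before} this step, not after. Second, the lower bound $\mathscr{L}_g^{n}(1)(x)\ge e^{-n\|g\|_\infty}$ degenerates as $n\to\infty$ and proves nothing about positivity of the limit; the correct uniform bound $\mathscr{L}_g^{n}(1)\ge e^{-C'}$ follows instead from your own distortion estimate combined with $\int_\Omega \mathscr{L}_g^{n}(1)\,d\nu_f=1$, which forces $\mathscr{L}_g^{n}(1)(y)\ge 1$ at some $y$. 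The full-support hypothesis on the \textit{a priori} measure is not what drives this.

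Third, and most substantively, Birkhoff's theorem cannot be run on the cone of \emph{all} strictly positive continuous functions: the image of that cone under $\mathscr{L}_g^{n_0}$ has infinite projective diameter, because the operator regularizes the weights $e^{S_{n_0}f(a\,\cdot)}$ but not the modulus of continuity of an arbitrary positive test function. Concretely, a positive continuous $\varphi$ concentrated near a single preimage $a_0x$ makes the ratio $\sup_z \mathscr{L}_g^{n_0}\varphi(z)/\inf_z \mathscr{L}_g^{n_0}\varphi(z)$ arbitrarily large, so the Hilbert distance from $\mathscr{L}_g^{n_0}\varphi$ to $\mathscr{L}_g^{n_0}1$ is unbounded. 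One must instead contract the log-H\"older cone $\Lambda_C=\{h>0:\ h(x)\le e^{C d_\Omega(x,y)^{\gamma}}h(y)\ \text{for all}\ x,y\}$, showing $\mathscr{L}_g(\Lambda_C)\subset\Lambda_{C'}$ with $C'<C$ for $C$ large and $\Lambda_{C'}$ of finite diameter inside $\Lambda_C$; this is precisely where the H\"older hypothesis and the factor $1/2$ contraction of $d_\Omega$ under passage to preimages enter. With these three corrections your plan is the standard proof and does deliver existence of $h_f$ and $\nu_f$, simplicity of $\lambda_f$, and the spectral gap on $C^{\gamma}(\Omega)$.
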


\section{The Pressure of Continuous Potentials}
\label{secao-pressao}

The next proposition is an extension of Corollary 1.3 in \cite{MR1143171}.
Here $M$ is allowed to be any general compact metric space. It is worth mentioning that Sarig in \cite{Sarig2015} obtained a similar result for the Gurevich pressure
with respect to countable alphabets, but the techniques employed in our proof are much simpler.

\begin{proposition}\label{prop-limite-pressao}
Let $f\in C(\Omega)$ be a potential and
$\lambda_{f}$ given by \eqref{definicao2-auto-valor-do-dual}.
Then, for any $x\in\Omega$ we have
\[
\lim_{n\to\infty}\frac{1}{n}\log \mathscr{L}^n_{f}(1)(\sigma^n x)
=
\log\lambda_f.
\]
\end{proposition}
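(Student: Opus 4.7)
My plan is to establish the uniform two-sided bound
\[
e^{-V_n}\lambda_f^n \ \leq\ \mathscr{L}_f^n(1)(y)\ \leq\ e^{V_n}\lambda_f^n,\qquad \forall\, y\in\Omega,
\]
where $V_n=\sum_{j=1}^{n}\mathrm{var}_j(f)$ and $\mathrm{var}_j(f)=\sup\{|f(u)-f(v)| : u_i=v_i \text{ for } 1\leq i\leq j\}$. Since $f$ is continuous on the compact space $\Omega$ (product topology), it is uniformly continuous, so $\mathrm{var}_j(f)\to 0$ and therefore $V_n/n\to 0$ by the Ces\`aro theorem. After taking logarithms and dividing by $n$, the above bound yields convergence \emph{uniformly} in $y$; specializing to $y=\sigma^n x$ gives the statement, independently of $x$.

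The main step is a distortion estimate. Expanding the iterated operator,
\[
\mathscr{L}_f^n(1)(y)
=
\int_{M^n} \exp\!\Bigl(\sum_{k=0}^{n-1} f\bigl(\sigma^k(a_1\cdots a_n\, y)\bigr)\Bigr)\, dp^{\otimes n}(a_1,\ldots,a_n).
\]
For fixed $(a_1,\ldots,a_n)$, the points $\sigma^k(a_1\cdots a_n\,y)$ and $\sigma^k(a_1\cdots a_n\,y')$ agree in their first $n-k$ coordinates, so the $k$-th summand differs by at most $\mathrm{var}_{n-k}(f)$ and the total Birkhoff-type sum by at most $V_n=\sum_{j=1}^n\mathrm{var}_j(f)$. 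Exponentiating and integrating against $p^{\otimes n}$ yields
\[
e^{-V_n}\, \mathscr{L}_f^n(1)(y')\ \leq\ \mathscr{L}_f^n(1)(y)\ \leq\ e^{V_n}\, \mathscr{L}_f^n(1)(y'), \qquad \forall\, y,y'\in\Omega.
\]

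To pin down the scale $\lambda_f^n$, fix any $\nu_f\in\mathscr{G}^*(f)$, whose existence was verified in the discussion immediately preceding the statement. Iterating the duality relation \eqref{eq-dualidade} gives $\int_\Omega \mathscr{L}_f^n(1)\, d\nu_f = \lambda_f^n$, so integrating the distortion estimate in the $y'$ variable against $\nu_f$ delivers the announced two-sided bound on $\mathscr{L}_f^n(1)(y)$. The only nonroutine ingredient is to verify $V_n=o(n)$, which reduces to uniform continuity of $f$ on the compact space $\Omega$ and requires no extra regularity on the potential (no H\"older, Walters, or Bowen condition). In fact, the argument gives uniform convergence in $y$, which is stronger than what the proposition asks for.
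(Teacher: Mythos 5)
Your proof is correct and follows essentially the same route as the paper's: a distortion estimate showing that $\sup_{y,y'}|S_n(f)(ay)-S_n(f)(ay')|$ is $o(n)$ by uniform continuity of $f$, followed by integration against an eigenmeasure and the duality relation to identify the scale $\lambda_f^n$. The only difference is cosmetic --- you package the sublinearity as $V_n=\sum_{j\le n}\mathrm{var}_j(f)$ with Ces\`aro, whereas the paper splits the Birkhoff sum into $(n-N_0)$ terms controlled by an $\varepsilon$ of uniform continuity plus $N_0$ terms controlled by $2\|f\|_\infty$; your formulation is arguably tidier and makes the uniformity in $y$ explicit.
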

\begin{proof}

Let $\nu\in \mathscr{G}^{*}(f)$ a fixed eigenprobability.
Without loss of generality we can assume that $\mathrm{diam}(M)=1$.
By the definition of $d_{\Omega}$ for any pair $z,w\in\Omega$
such that $z_i=w_i,\ \forall i=1,\ldots,N$ we have that
$d_{\Omega}(z,w)\leq 2^{-N}$. From uniform continuity of $f$
given $\varepsilon>0$, there is $N_0\in\mathbb{N}$, so that
$|f(z)-f(w)|<\varepsilon/2$, whenever $d_{\Omega}(z,w)<2^{-N_0}$.
If $n>2N_0$ and $a:=(a_1,\ldots,a_n)$ we claim that
\begin{align}\label{des-Snf}
\|S_n(f)(ax)-S_n(f)(ay)\|_{\infty}
\leq (n-N_0)\frac{\varepsilon}{2}+2\|f\|_{\infty}N_0,
\end{align}
where $S_n(f) \equiv f+f\circ \sigma +\ldots+ f\circ\sigma^{n-1}$.
Indeed, for any $n\geq 2N_0$ we have
\begin{align*}
|S_n(f)(ax)-S_n(f)(ay)|
&=
|
\sum_{j=0}^{n-1}  f (\sigma^j (a_1,\ldots,a_n,x)
-
\sum_{j=0}^{n-1}  f (\sigma^j (a_1,\ldots,a_n,y)
|
\\
&\hspace*{-1cm}\leq
\sum_{j=0}^{n-1-N_0}
|f (\sigma^j (a_1,\ldots,a_n,x) -  f (\sigma^j (a_1,\ldots,a_n,y) |
+
\\
&
\sum_{j=0}^{N_0}
|f (\sigma^j (a_{n-N_0},\ldots,a_n,x) -  f (\sigma^j (a_{n-N_0},\ldots,a_n,y) |
\\
&\hspace*{-1cm}\leq
(n-N_0) \frac{\varepsilon}{2} + 2N_0\|f\|_{\infty}.
\end{align*}
The last inequality comes from the uniform continuity for the first terms and
from the uniform norm of $f$ for the second ones.

We recall that for any probability space $(E,\mathscr{E},\mathbb{P})$,
$\varphi$ and $\psi$ bounded real $\mathscr{E}$-measurable
functions the following inequality holds
\begin{align}\label{des-log-integral}
\left|
\log  \int_{E} e^{\varphi(\omega)} d\mathbb{P}(\omega)
-
\log \int_{E} e^{\psi(\omega)} d\mathbb{P}(\omega)
\right|
\leq
\|\varphi-\psi\|_{\infty}.
\end{align}
From the definition of the Ruelle operator, for any $n\in\mathbb{N}$,
we have
\[
\mathscr{L}^n_{f}(1)(\sigma^n x)
=
\int_{M^n} \exp(S_n(f)(a\sigma^n x))\, \prod_{i=1}^{n}dp(a_i)
\]
and from \eqref{des-Snf} and
\eqref{des-log-integral} with
$\varphi(a)=S_n(f)(a\sigma^n x)$ and
$\psi(a)=S_n(f)(ay)$
we get for $n\geq \max\{2N_0,4\varepsilon^{-1}\|f\|_{\infty}N_0\}$
the  estimate
\[
\frac{1}{n}
|\log(\mathscr{L}^n_{f}(1)(\sigma^n x))
-\log(\mathscr{L}^n_{f}(1)(y))|
\leq
\frac{1}{n}((n-N_0)\frac{\varepsilon}{2}+\frac{2\|f\|_{\infty}N_0}{n}
\leq
\varepsilon.
\]
By using Fubini's theorem, sum and subtract $\exp(S_n(f)(a\sigma^{n}y))$,
the identity \eqref{eq-dualidade}
iteratively and the last inequality
for $n\geq \max\{2N_0,4\varepsilon^{-1}\|f\|_{\infty}N_0\}$
we obtain
\begin{align*}
&\mathscr{L}^n_{f}(1)(\sigma^n x)
=
\int_{M^n} \exp(S_n(f)(a\sigma^n x))\, \prod_{i=1}^{n}dp(a_i)
\\
&=
\int_{M^n}\int_{\Omega}
\exp(S_n(f)(a\sigma^n x))\, d\nu (y)\prod_{i=1}^{n}dp(a_i)
\\
&\leq
\exp((n-N_0)\frac{\varepsilon}{2}+2\|f\|_{\infty}N_0)
\int_{M^n}\int_{\Omega}
\exp(S_n(f)(ay))
\, d\nu_f(y)\prod_{i=1}^{n}dp(a_i)
\\
&\leq
\exp(n\varepsilon)
\int_{\Omega}
\mathscr{L}^{n}_{f}(1)( y)
\, d\nu (y)
\\
&=
\exp(n\varepsilon)
\lambda_f^n.
\end{align*}
Similarly we obtain the lower bound
$\mathscr{L}^n_{f}(1)(\sigma^n x)\geq \exp(-n\varepsilon)\lambda_f^n$
so the proposition follows.
\end{proof}

\begin{corollary} \label{cor:same lambda for all}
Let $f$ be a continuous potential. If $\nu$ and $\hat{\nu}$
are fixed points for the map $\gamma\mapsto \mathscr{L}^{*}_{f}(\gamma)/\mathscr{L}^{*}_{f}(\gamma)(1)$
then $\mathscr{L}^{*}_{f}(\nu)(1)=\mathscr{L}^{*}_{f}(\hat{\nu})(1)=\lambda_{f}$.
\end{corollary}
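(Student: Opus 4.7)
The plan is to exploit the fact that the proof of Proposition \ref{prop-limite-pressao} actually yields a uniform (in $y \in \Omega$) comparison between $\mathscr{L}^n_{f}(1)(y)$ and $\lambda_f^n$, not merely the pointwise limit that is stated. Once we have this uniform comparison, integrating against a candidate eigenmeasure immediately forces its eigenvalue to coincide with $\lambda_f$.

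First I would set up notation: let $\nu,\hat\nu$ be two fixed points of $\gamma\mapsto\mathscr{L}^*_{f}(\gamma)/\mathscr{L}^*_{f}(\gamma)(1)$, and put $\alpha:=\mathscr{L}^*_{f}(\nu)(1)$ and $\beta:=\mathscr{L}^*_{f}(\hat\nu)(1)$. By definition of $\lambda_f$ we have $\alpha,\beta\leq \lambda_f$, so it suffices to prove the reverse inequalities. Iterating the duality relation \eqref{eq-dualidade} together with $\mathscr{L}^*_{f}\nu=\alpha\nu$ gives
\[
\alpha^n=\int_{\Omega}1\,d[(\mathscr{L}^*_{f})^{n}\nu]=\int_{\Omega}\mathscr{L}^{n}_{f}(1)\,d\nu,
\qquad
\beta^n=\int_{\Omega}\mathscr{L}^{n}_{f}(1)\,d\hat\nu.
\]

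Next I would revisit the core estimate \eqref{des-Snf} from the proof of Proposition \ref{prop-limite-pressao}. There the bound $|S_n(f)(ax)-S_n(f)(ay)|\leq (n-N_0)\varepsilon/2+2\|f\|_{\infty}N_0$ holds for every pair $x,y\in\Omega$ and every $a\in M^n$, with $N_0$ depending only on $\varepsilon$ and on the modulus of continuity of $f$. Consequently the resulting comparison $\exp(-n\varepsilon)\lambda_f^n\leq \mathscr{L}^{n}_{f}(1)(y)\leq \exp(n\varepsilon)\lambda_f^n$ is valid for all $n$ sufficiently large and \emph{uniformly} in $y\in\Omega$ (since $\sigma^n$ is surjective on $\Omega$, the variable $\sigma^n x$ runs over the whole space, so the estimate in the proof of the proposition is actually a two-sided uniform bound).

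Integrating this uniform two-sided bound against $\nu$ and $\hat\nu$ respectively yields, for every $\varepsilon>0$ and every large $n$,
\[
\exp(-n\varepsilon)\lambda_f^n\leq \alpha^n\leq \exp(n\varepsilon)\lambda_f^n,
\qquad
\exp(-n\varepsilon)\lambda_f^n\leq \beta^n\leq \exp(n\varepsilon)\lambda_f^n.
\]
Taking $(1/n)$-logarithms gives $|\log\alpha-\log\lambda_f|\leq\varepsilon$ and $|\log\beta-\log\lambda_f|\leq\varepsilon$, and letting $\varepsilon\downarrow 0$ concludes $\alpha=\beta=\lambda_f$.

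The only delicate point is the second paragraph above: one must verify that the estimate produced inside the proof of Proposition \ref{prop-limite-pressao} is genuinely uniform in $y$, not just the pointwise limit claimed in its statement. Everything else is a one-line consequence of the duality relation and taking logarithms, so I expect no further obstacle.
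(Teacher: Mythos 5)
Your proposal is correct and follows essentially the same route as the paper: the paper's proof also reduces the corollary to the two-sided estimate established in the proof of Proposition \ref{prop-limite-pressao} (re-run with an arbitrary fixed point in place of the supremum-achieving one), and the uniformity in the base point that you flag as the delicate step is indeed present there, since the bound \eqref{des-Snf} holds for all $x,y\in\Omega$ and all $a\in M^n$. Your observation that $\alpha^n=\int_\Omega\mathscr{L}^n_f(1)\,d\nu$ combined with the squeeze is just a repackaging of the paper's statement that $\tfrac1n\log\mathscr{L}^n_f(1)(x_0)$ converges to $\log\lambda_f(\nu)$ for every fixed point $\nu$, so no new idea is needed and no gap remains.
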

\begin{proof}
For any $x_0\in\Omega$, by repeating the same steps of the proof of the
previous proposition one shows that
$
\log(\mathscr{L}^{*}_{f}(\nu)(1))
\equiv
\log(\lambda_f(\nu))
=
\lim_{n\to\infty}\frac{1}{n}\log \mathscr{L}^n_{f}(1)(x_0)
=
\log(\lambda_f(\hat{\nu}))
=
\log(\mathscr{L}^{*}_{f}(\hat{\nu})(1)).
$
\end{proof}

\begin{definition}[Pressure Functional]
The function $p:C(\Omega)\to\mathbb{R}$ given by
$p(f)=\log\lambda_f $ is called pressure functional.
\end{definition}

In classical Thermodynamic Formalism, where $M$ is a finite alphabet,
the pressure functional usually refers to
the function $P:C(\Omega)\to \mathbb{R}$ given by
\[
P(f)\equiv
\sup_{\mu\in \mathscr{P}_{\sigma}(\Omega,\mathscr{F})}
\{h_{\mu}(\sigma) + \int_{\Omega} f\, d\mu\}.
\]
After developing some perturbation theory we will
show latter that both definitions of the Pressure
functional are equivalent for any continuous
potential, i.e., $P=p$.

Since $\Omega$ is compact and the space of all
$\gamma$-H\"older continuous functions $C^{\gamma}(\Omega)$
is an algebra that separate points and contains
the constant functions, we can apply the Stone-Weierstrass
theorem to conclude that the closure of $C^{\gamma}(\Omega)$
in the uniform topology is $C(\Omega)$.
Therefore, for any arbitrary continuous potential $f$,
there is a sequence $(f_n)_{n\in\mathbb{N}}$ of H\"older continuous
potentials such that
$\|f_n-f\|_{\infty}\to 0$, when $n\to\infty$.
For such uniform convergent sequences we will see that
$p(f_n)$ converges to $p(f)$.  Actually, in our case the sequence $(f_n)_{n\in\mathbb{N}}$
can be explicitly exhibited, see Section \ref{secao-aplicacoes}. Moreover, a much
stronger result holds. That is, the pressure functional
is Lipschitz continuous function on $C(\Omega)$.

\begin{proposition}\label{prop-pressao-lip}
If $f,g:\Omega\to\mathbb{R}$ are two arbitrary continuous
potentials then the following inequality holds $|p(f)-p(g)|\leq \|f-g\|_{\infty}$.
\end{proposition}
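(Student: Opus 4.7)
The plan is to leverage Proposition \ref{prop-limite-pressao}, which gives the intrinsic formula $p(f)=\log\lambda_f=\lim_{n\to\infty}\tfrac{1}{n}\log\mathscr{L}^n_f(1)(\sigma^n x)$ for every $x\in\Omega$, and combine it with the elementary log–integral inequality \eqref{des-log-integral} already introduced earlier in the section.

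First, I would write the $n$-th iterate of the Ruelle operator explicitly as an integral over $M^n$: for any $y\in\Omega$,
\[
\mathscr{L}^n_f(1)(y)=\int_{M^n}\exp\bigl(S_n(f)(ay)\bigr)\prod_{i=1}^n dp(a_i),
\]
with the analogous identity for $g$. Since $|S_n(f)(z)-S_n(g)(z)|\leq n\|f-g\|_\infty$ pointwise in $z$, the inequality \eqref{des-log-integral} applied with $E=M^n$, $P=p^{\otimes n}$, $\varphi(a)=S_n(f)(ay)$, $\psi(a)=S_n(g)(ay)$ yields
\[
\bigl|\log\mathscr{L}^n_f(1)(y)-\log\mathscr{L}^n_g(1)(y)\bigr|\leq n\|f-g\|_\infty.
\]

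Next I would pick any $x\in\Omega$, specialize $y=\sigma^n x$, divide by $n$, and observe
\[
\left|\tfrac{1}{n}\log\mathscr{L}^n_f(1)(\sigma^n x)-\tfrac{1}{n}\log\mathscr{L}^n_g(1)(\sigma^n x)\right|\leq \|f-g\|_\infty.
\]
By Proposition \ref{prop-limite-pressao} the two sequences on the left converge to $\log\lambda_f$ and $\log\lambda_g$ respectively, so passing to the limit $n\to\infty$ gives $|p(f)-p(g)|=|\log\lambda_f-\log\lambda_g|\leq\|f-g\|_\infty$, which is precisely the claim.

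There is no substantive obstacle here: the only point worth being careful about is that we apply the log–integral estimate uniformly in $y$ so that evaluation at $\sigma^n x$ is legitimate, and that the existence of both limits (already guaranteed by Proposition \ref{prop-limite-pressao}) allows us to split the limit of a difference into a difference of limits.
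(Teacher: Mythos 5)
Your proposal is correct and follows exactly the route the paper intends: its proof of this proposition is stated as an immediate consequence of Proposition \ref{prop-limite-pressao} together with the inequality \eqref{des-log-integral}, which is precisely the combination you spell out (via the bound $|S_n(f)-S_n(g)|\leq n\|f-g\|_{\infty}$ and passage to the limit). Your write-up simply makes explicit the details the paper leaves to the reader.
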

\begin{proof}
The proof is an immediate consequence of Proposition \ref{prop-limite-pressao}
and inequality \eqref{des-log-integral}.
\end{proof}

\begin{corollary}\label{cor-converg-lambdan}
Let $(f_n)_{n\in\mathbb{N}}$ be a sequence of continuous potentials
such that $f_n\to f$ uniformly, then $p(f_n)\to p(f)$.
In particular, $\lambda_{f_n}\to\lambda_{f}$.
\end{corollary}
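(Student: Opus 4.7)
The plan is to deduce this corollary directly from the Lipschitz continuity of the pressure functional established in Proposition \ref{prop-pressao-lip}, so there is essentially no obstacle to overcome; the work has already been done.

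First I would apply Proposition \ref{prop-pressao-lip} to the pair $(f_n, f)$, which yields the bound
\[
|p(f_n) - p(f)| \leq \|f_n - f\|_{\infty}.
\]
By hypothesis the right-hand side tends to zero as $n\to\infty$, so $p(f_n)\to p(f)$, proving the first assertion. This step is the whole content of the corollary's first claim and requires nothing beyond the previous proposition.

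For the ``in particular'' statement, I would simply unfold the definition of the pressure functional, namely $p(g) = \log \lambda_g$ for every continuous potential $g$. The bounds $\exp(-\|g\|_\infty)\leq \lambda_g \leq \exp(\|g\|_\infty)$ recalled in Section \ref{secao-preliminares} ensure that $\lambda_g > 0$, so the logarithm is applied at a point where the exponential is a continuous inverse. Composing with $\exp$ gives
\[
\lambda_{f_n} = \exp(p(f_n)) \longrightarrow \exp(p(f)) = \lambda_f,
\]
which is the desired conclusion. No step here requires any additional estimate beyond what was already proved, so the entire proof amounts to two short invocations; the conceptual work sits in Proposition \ref{prop-limite-pressao} and Proposition \ref{prop-pressao-lip}, which together encode the uniform control of $n^{-1}\log \mathscr{L}^n_f(1)$ in terms of $\|f\|_\infty$.
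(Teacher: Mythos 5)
Your proof is correct and follows exactly the route the paper intends: the corollary is an immediate consequence of the Lipschitz bound $|p(f_n)-p(f)|\leq\|f_n-f\|_{\infty}$ from Proposition \ref{prop-pressao-lip}, followed by exponentiation using $p(g)=\log\lambda_g$ and the positivity of $\lambda_g$. Nothing further is needed.
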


\section{DLR-Gibbs Measures and Eigenmeasures}
\label{secao-equiv-dlr-automedidas}

In this section we discuss the concept of specifications in the setting of
Thermodynamic Formalism. Some of its elementary properties
for finite state spaces are discussed in details within this framework
in  \cite{Cioletti20176139}.

For each $n\in \mathbb{N}$, we define the projection on the $n$-th
coordinate
$\pi_n:\Omega\to M$ by $\pi_n(x)=x_n$. We use the notation
$\mathscr{F}_n$ to denote the sigma-algebra generated by the
projections $\pi_1,\ldots,\pi_n$.
On the other hand, the notation $\mathscr{T}_n$ stands for the sigma-algebra
generated by the collection of projections $\{\pi_{k}: k\geq n+1\}$.
The so-called tail sigma-algebra (or terminal sigma-algebra) is defined as $\mathscr{T}\equiv \cap_{n\in\mathbb{N}} \mathscr{T}_n$.
A cylinder set in $\Omega$ is a set of the form $\pi^{-1}_{1}(E_1)\cap \ldots \cap\pi^{-1}_{k}(E_k)$,
where $E_1,\ldots,E_k\in \mathscr{B}(M)$, the Borel sigma-algebra of $M$.

\bigskip

Let $f\in C(\Omega)$ a potential and for each $n\in\mathbb{N}$,
$x\in\Omega$ and $E\in\mathscr{F}$ consider the mapping
$K_n:\mathscr{F}\times\Omega\to [0,1]$ given by
\begin{align}\label{def-Kn-ruelle}
K_n(E,x)\equiv
\frac{\mathscr{L}_{f}^{n}(1_E)(\sigma^n(x)) }
{\mathscr{L}^{n}_{f}(1)(\sigma^n(x))}.
\end{align}
For any fixed $x\in\Omega$, the monotone convergence theorem implies that the
map $\mathscr{F}\ni E\mapsto K_n(E,x)$ is a
probability measure. For any fixed measurable
set $E\in\mathscr{F}$ follows from the Fubini theorem that
the map $x\mapsto K_n(E,x)$ is $\mathscr{T}_{n}$-measurable.
So $K_n$ is a \textbf{probability kernel} from $\mathscr{T}_{n}$
to $\mathscr{F}$.

Notice that, for any $\varphi\in C(\Omega)$, the kernel $K_n(\varphi,x)$ is
well defined due to the right hand side of \eqref{def-Kn-ruelle}.
It is easy to see (using the rhs of \eqref{def-Kn-ruelle})
that they are proper kernels, meaning that
for any bounded $\mathscr{T}_{n}$-measurable function
$\varphi$, we have $K_n(\varphi,x)=\varphi(\sigma^n(x))$.
The above probability kernels have the following important property.
For any fixed continuous function $\varphi$, the map
$x\mapsto K_n(\varphi,x)$ is continuous as a consequence of the Lebesgue dominated
convergence theorem. We refer to this saying that $(K_n)_{n\in\mathbb{N}}$
has the Feller property.

\medskip

\begin{definition} \label{esp} A Gibbsian specification with parameter set $\mathbb{N}$
in the translation invariant setting is an abstract family of
probability Kernels $K_n:(\mathscr{F},\Omega)\to [0,1]$,
$n\in\mathbb{N}$ such that

\begin{enumerate}
		\item[a)]
		$\Omega\ni x\to K_{n}(E,x)$ is $\sigma^{n}\mathscr{F}$-measurable
		function for any $E\in\mathscr{F}$;
		
		\item[b)] $\mathscr{F}\ni E\mapsto K_{n}(E,x)$ is a probability measure
		for any $x\in\Omega$;
		
		\item[c)] for any $n,r\in\mathbb{N}$ and any bounded $\mathscr{F}$-measurable function
		$f:\Omega\to\mathbb{R}$ we have \textbf{the compatibility condition}, i.e.,
		\begin{align*}
		K_{n+r}(f,x)
		=
		\int_{\Omega} K_{n}(f,\cdot) dK_{n+r}(\cdot,x)
		\equiv
		K_{n+r}(K_n(f,\cdot),x).
		\end{align*}
\end{enumerate}
\end{definition}

\begin{remark}
The classical definition of specification as given in \cite{MR2807681}
requires even in our setting a larger family of probability kernels.
To be more precise we
have to define a probability kernel for any finite subset
 $\Lambda\subset\mathbb{N}$
and the kernels $K_{\Lambda}$ have to satisfy a), b) and a generalization of c).
In the translation invariant setting (in the sense that the one site influence has same expression for every site) on the lattice $\mathbb{N}$, the formalism can be simplified and one
needs only to consider the family $K_n$, $n\in \mathbb{N},$
as defined above.

Strictly speaking, in order to use the results in \cite{MR2807681},
one first has to extend our specifications to
any set $\Lambda=\{n_1,\ldots,n_r\}$, but this can
be consistently done by putting $K_{\Lambda}\equiv K_{n_r}$.
This simplified definition adopted here is further justified by the
fact that the DLR-Gibbs measures, compatible with a specification
with parameter set $\mathbb{N}$,
are completely determined by the kernels indexed in
any cofinal collection of
subsets of $\mathbb{N}$. So here we are taking advantage of
this result to define our kernels only on the cofinal
collection of subsets of $\mathbb{N}$ of the form
$\{1,\ldots,n\}$ with $n\in\mathbb{N}$. Therefore, when writing
$K_n$, we are really thinking, in terms of
the general definition of specifications,
of $K_{\{1,\ldots,n\}}$.
\end{remark}

\medskip
The only specifications needed here are
the ones given in (\ref{def-Kn-ruelle}),
which are defined in terms of any continuous potential $f$.
Notice that in the translation invariant setting,
the construction in (\ref{def-Kn-ruelle})
for the lattice $\mathbb{N}$ extends the usual construction
made in terms of regular interactions. But in any case, the kernels in
(\ref{def-Kn-ruelle}) give us particular constructions of
quasilocal specifications which allow us to use some of the
results from \cite{MR2807681}. We refer the reader to
\cite{Cioletti20176139} and \cite{MR1783787} for results about specifications
 when the dynamics have the finite pre-images property.

\medskip

Using $\mathscr{L}^n_f(\varphi\cdot \psi \circ \sigma^n) = \mathscr{L}^n_f(\varphi) \psi$ one easily obtains the following identity   for any  $r,n\in\mathbb{N}$, $x\in\Omega$ and $\varphi\in C(\Omega)$ (see, e.g., the proof of Theorem 4.3 in \cite{Cioletti20176139}).
\begin{align}\label{rel-consist-operador-ruelle}
  \mathscr{L}^{n+r}_f (\varphi) (\sigma^{n+r} (x))
  =
  \mathscr{L}^{n+r}_f
  \Big(
  \frac{\mathscr{L}^{n}_f\,\,(\varphi) (\,\sigma^n (\cdot)\,)}
  {\mathscr{L}^{n}_f(1) (\sigma^n (\cdot)\,)}
  \Big)
  (\sigma^{n+r} (x)).
\end{align}
The above identity immediately implies for the Kernels
defined by \eqref{def-Kn-ruelle} that
\begin{align}\label{eq-dlr}
K_{n+r}(f,x)
=
\int_{\Omega} K_{n}(f,\cdot) dK_{n+r}(\cdot,x)
\equiv
K_{n+r}(K_n(f,\cdot),x).
\end{align}
We refer to the above set of identities
as compatibility conditions for the family of probability kernels
$(K_n)_{n\in\mathbb{N}}$ or simply DLR equations.
Similar kernels are also considered in \cite{MR1783787}, but here
we are working with a dynamical system that may have uncountable many elements
in the preimage of any point.

\begin{definition}
We say that $\mu\in \mathscr{P}(\Omega,\mathscr{F})$
is a DLR-Gibbs measure for the continuous
potential $f$ if for any $n\in \mathbb{N}$ and any continuous function
$\varphi:\Omega \to \mathbb{R}$ we have for $\mu$-almost all $x$
that
\[
\mathbb{E}_\mu[\varphi|\mathscr{T}_{n}](x)=
\int_{\Omega} \varphi(y) \,d K_n(y,x).
\]
The set of all DLR-Gibbs measures for $f$ is
denoted by $\mathscr{G}^{DLR}(f)$.

\end{definition}

One very important and elementary result on DLR-Gibbs measure
is the equivalence between the two conditions below:
\begin{itemize}
	\item[a)] $\mu\in\mathscr{G}^{DLR}(f)$;
	
	\item[b)] for any $n\in \mathbb{N}$
				  and $E\in\mathscr{F}$ we have that
				  $\mu(E)=\int_{\Omega} K_{n}(E,\cdot)\, d\mu$.
\end{itemize}

We now prove that $\mu\in\mathscr{G}^{DLR}(f)$ is not
empty. The result of the next lemma for countable $M$ can also be found in \cite{Sarig2015}. For these state spaces it is also possible to allow some less regularity than continuit.

\begin{lemma}\label{lemma-star-subset-DLR}
Let $f\in C(\Omega)$ and $(K_n)_{n\in\mathbb{N}}$
as in \eqref{def-Kn-ruelle}. Then
$ \mathscr{G}^{*}(f)\subset \mathscr{G}^{DLR}(f)$.
\end{lemma}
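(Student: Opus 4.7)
The plan is to verify the defining DLR property of $\nu \in \mathscr{G}^{*}(f)$ by testing against products $\varphi \cdot (h\circ \sigma^n)$ with $\varphi, h \in C(\Omega)$. Equivalently (by the characterization recalled in the paper and the definition of conditional expectation), it suffices to show
\[
\int_{\Omega} \varphi \cdot (h\circ \sigma^n)\, d\nu
\;=\;
\int_{\Omega} K_n(\varphi,x)\cdot h(\sigma^n x)\, d\nu(x)
\]
for every continuous $\varphi,h$ and every $n\in\mathbb{N}$, and then extend by a standard monotone class argument to all bounded $\sigma^n(\mathscr{F})$-measurable test functions.

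The first step is an algebraic identity that expresses the fact that $\mathscr{L}_f^n$ acts only on the first $n$ coordinates: since for any $a = (a_1,\ldots,a_n)\in M^n$ and $y\in\Omega$ one has $\sigma^n(ay)=y$, the definition of the operator yields
\[
\mathscr{L}_f^{n}\bigl(\varphi\cdot (h\circ\sigma^n)\bigr)(y)
= h(y)\,\mathscr{L}_f^{n}(\varphi)(y).
\]
The second step is to use the eigenmeasure property $\mathscr{L}_f^*\nu = \lambda_f\nu$ iterated $n$ times, which gives the duality $\int \mathscr{L}_f^n(\psi)\,d\nu = \lambda_f^n \int \psi\, d\nu$ for all $\psi\in C(\Omega)$. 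Applied to $\psi = \varphi\cdot (h\circ\sigma^n)$ and combined with the algebraic identity, this produces
\[
\int \varphi\cdot (h\circ\sigma^n)\, d\nu
= \lambda_f^{-n}\int h\cdot \mathscr{L}_f^n(\varphi)\, d\nu.
\]

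The third step computes the right-hand side of the DLR equation in the same way. Set $G := h\cdot \mathscr{L}_f^n(\varphi)/\mathscr{L}_f^n(1)$, which is continuous since $\mathscr{L}_f^n(1)$ is strictly positive. Applying the same identity with $\varphi$ replaced by the constant $1$ and $h$ replaced by $G$, together with the duality,
\[
\int K_n(\varphi,\cdot)\cdot (h\circ\sigma^n)\, d\nu
= \int (G\circ\sigma^n)\, d\nu
= \lambda_f^{-n}\int G\cdot \mathscr{L}_f^n(1)\, d\nu
= \lambda_f^{-n}\int h\cdot \mathscr{L}_f^n(\varphi)\, d\nu.
\]
Comparing with the second step gives the desired equality on continuous test functions.

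Finally, since continuous functions $h$ are weak-$*$ dense in the bounded $\sigma^n(\mathscr{F})$-measurable functions (via a monotone class / dominated convergence argument built from the Feller property of $K_n$ already noted in the text), the identity extends to all bounded $\sigma^n(\mathscr{F})$-measurable test functions $\psi$, which, together with the $\sigma^n(\mathscr{F})$-measurability of $x\mapsto K_n(\varphi,x)$, yields $\mathbb{E}_\nu[\varphi\mid\sigma^n(\mathscr{F})](x) = K_n(\varphi,x)$ $\nu$-a.s., i.e.\ $\nu\in\mathscr{G}^{DLR}(f)$. I do not anticipate a serious obstacle here: the whole argument rests on the invariance identity $\sigma^n(ay)=y$ and the eigenmeasure equation; the only mild care is in the final measure-theoretic extension, which is routine in this compact-metric setting.
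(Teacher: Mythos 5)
Your argument is correct and follows essentially the same route as the paper's proof: both rest on the pull-out identity $\mathscr{L}_f^{n}(\varphi\cdot(h\circ\sigma^n))=h\cdot\mathscr{L}_f^{n}(\varphi)$ (a consequence of $\sigma^n(ay)=y$), the iterated duality $\int\mathscr{L}_f^n(\psi)\,d\nu=\lambda_f^n\int\psi\,d\nu$, and a final monotone class extension to $\sigma^n(\mathscr{F})$-measurable test functions. The only cosmetic difference is that you evaluate the two sides of the DLR identity separately and meet in the middle, whereas the paper transforms one side into the other via the compatibility relation \eqref{rel-consist-operador-ruelle}, which you effectively re-derive in the special case needed.
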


\begin{proof}
Let $\nu$ be such that $\mathscr{L}^*_f\nu=\lambda_f\nu$ and
$\varphi$ a bounded $\mathscr{F}$-measurable function.
Notice that the quotient appearing in the first
integral below is $\mathscr{T}_{n}$-measurable.
Therefore for any bounded $\mathscr{F}$-measurable $\psi$
the following equality holds.
\begin{align*}
\int_{\Omega} (\varphi \circ \sigma^n) (x)\,
\frac{\mathscr{L}_{ f}^n \, (\psi )\, (\sigma^n (x))}
{ \mathscr{L}_{\,f}^n \, (1 )\, (\sigma^n (x)) }\, d\,\nu (x)
&=
\int_{\Omega}\,
\frac{\mathscr{L}_{f}^n \, (\psi \,  (\varphi \circ \sigma^n) )\, (\sigma^n (x))}
{ \mathscr{L}_{\,f}^n \, (1 )\, (\sigma^n (x)) }\, d\,\nu (x)
\\[0.4cm]
&\hspace*{-1.5cm}=
\int_{\Omega}\, \frac{1}{\lambda^n}\,\mathscr{L}_{ f}^n
	\left[  \frac{\mathscr{L}_{ f}^n \, (\psi \,  (\varphi \circ \sigma^n) )\, (\sigma^n (\cdot))}
				{ \mathscr{L}_{\,f}^n \, (1 )\, (\sigma^n (\cdot)) }
	\right](x)\, d\,\nu (x).
\end{align*}
By using the equation \eqref{rel-consist-operador-ruelle}
we see that the rhs above is equal to
\begin{align*}
\int_{\Omega}\,
\frac{1}{\lambda^n}\,\mathscr{L}_{ f}^n
		(\psi \,  (\varphi \circ \sigma^n) )(x)
		\, d\,\nu (x)
&=
\int_{\Omega}  \psi(x)\, (\varphi \circ \sigma^n)(x)\,\, d\,\nu (x).
\end{align*}
Since $\varphi$ is an arbitrary $\mathscr{F}$-measurable
function we can conclude that
$$
	\nu[E|\mathscr{T}_{n}](y)
	=
	\frac{\mathscr{L}_{f}^n \, (I_E )\, (\sigma^n (y))}
	{ \mathscr{L}_{f}^n \, (1 )\, (\sigma^n (y)) }
	\quad
	\nu-\text{a.s.}
$$
so the equation (\ref{def-Kn-ruelle}) implies that $\nu\in\mathscr{G}^{DLR}(f)$.
\end{proof}

\begin{corollary}
Let $f\in C(\Omega)$ be a potential and $(K_n)_{n\in\mathbb{N}}$
the specification defined by \eqref{def-Kn-ruelle}.
If $\mathscr{G}^{DLR}(f)$ is a singleton, then
$ \mathscr{G}^{*}(f) = \mathscr{G}^{DLR}(f)$.
\end{corollary}

The next lemma  establishes the reverse inclusion between
$\mathscr{G}^{DLR}(f)$ and   $\mathscr{G}^{*}(f)$.
Before, we state an interesting result from specification theory
which essentially says that the tail sigma-algebra $\mathscr{T}$
has to be trivial with respect to an extremal DLR-Gibbs measure.

%
%
%
%
\begin{theorem}\label{extreme vs exact}
Let $(K_n)_{n\in\mathbb{N}}$ be the specification given
in \eqref{def-Kn-ruelle}. Then the following conclusion
holds. A probability measure $\mu\in \mathscr{G}^{DLR}(f)$ is extreme
in $\mathscr{G}^{DLR}(f)$ if and only if $\mu$ is trivial
on $\mathscr{T}$. Consequently,
if $\mu$ is extreme in $\mathscr{G}^{DLR}(f)$, then every
$\mathscr{T}$-measurable function
$f$ is constant $\mu$ a.s..
\end{theorem}
The proof of the theorem adapted to our setting can be found in the appendix below.

\begin{lemma} \label{cheese}
Let $f\in C(\Omega)$ be a potential and $(K_n)_{n\in\mathbb{N}}$
defined as in \eqref{def-Kn-ruelle}. If $\mu$ is an extreme element in $\mathscr{G}^{DLR}(f)$
then $\mu\in \mathscr{G}^{*}(f)$.
\end{lemma}

%

\begin{proof}
This proof is based on the following algebraic identity
\begin{equation}\label{eq-aux-Lemma-DLR-automedida}
K_n(\mathscr{L}_{f}(\varphi),\sigma(y))
=
K_n(\mathscr{L}_{f}(1),\sigma(y))\ K_{n+1}(\varphi,y)
\end{equation}
which holds for every $n\in \mathbb{N}$, $y\in \Omega$ and $\varphi\in C(\Omega)$.
Indeed,
\begin{align*}
K_n(\mathscr{L}_{f}(\varphi),\sigma(y))
=
\frac{\mathscr{L}^{n+1}_{f}(\varphi)(\sigma^{n+1}(y))}{\mathscr{L}^{n}_{f}(1)(\sigma^{n+1}(y))}
&=
\frac{\mathscr{L}^{n+1}_{f}(1)(\sigma^{n+1}(y))}{\mathscr{L}^{n}_{f}(1)(\sigma^{n+1}(y))} \frac{\mathscr{L}^{n+1}_{f}(\varphi)(\sigma^{n+1}(y))}{\mathscr{L}^{n+1}_{f}(1)(\sigma^{n+1}(y))}
\\
&=
K_n(\mathscr{L}_{f}(1),\sigma(y))\ K_{n+1}(\varphi,y).
\end{align*}
Let $\mu \in\mathscr{G}^{DLR}(f)$.
By definition of a DLR-Gibbs measure and the convergence of backward martingales,
we have
$\mathbb{E}_\mu[\varphi|\mathscr{T}_{n}] = K_n(\varphi,\,\cdot\,)$
converges almost surely for any $\varphi \in C(\Omega)$.
It follows from separability of $C(\Omega)$ that there exists a set
of full measure such that
$\int_{\Omega} \varphi\,  d\mu_y := \lim_{n\to\infty} K_n(\varphi,y)$ for all $\varphi \in C(\Omega)$
and $y$ in this set. By completing the sigma-algebra, we moreover may assume that the set $\Omega'$ defined as the set of $y$ such that $\lim_{n\to\infty} K_n(\varphi,y)$ for all $\varphi \in C(\Omega)$ is measurable. We now show that $\sigma^{-1}(\Omega') = \Omega'$. In order to do so, assume that $y \in \sigma^{-1}(\Omega')$. It then follows from \eqref{eq-aux-Lemma-DLR-automedida} that
\begin{align}\label{eq-aux1-quociente-Lnf}
\int_{\Omega} \mathscr{L}_{f}(\varphi) d\mu_{\sigma y}
&=
\lim_{n\to\infty} K_n(\mathscr{L}_{f}(\varphi),\sigma(y))
=
\lim_{n \to \infty} K_n(\mathscr{L}_{f}(1),\sigma(y))\ K_{n+1}(\varphi,y)
\nonumber\\[0.3cm]
&=
{\int_{\Omega} \mathscr{L}_{f}(1) d\mu_{\sigma y}}
\ \lim_{n \to \infty}
 K_{n+1}(\varphi,y)
=
\lambda_{\sigma y}\ \lim_{n \to \infty}    K_n(\varphi,y),
\end{align}
where $\lambda_{\sigma y} \equiv \int_{\Omega} \mathscr{L}_{f}(1)\, d\mu_{\sigma y}$
exists since $\sigma y \in \Omega'$. Hence,  $\lim_{n \to \infty}  K_n(\varphi,y)$
exists and therefore, 
 $\sigma^{-1}(\Omega') \subset \Omega'$. 
 Now assume that $y \in \Omega'$. By substituting $\varphi$ with $\varphi \circ \sigma / \mathscr{L}_{f}(1) \circ \sigma$, the same argument implies that  
 \begin{align*}\label{eq-aux1-quociente-Lnf}
 \int \frac{\varphi \circ \sigma}{\mathscr{L}_{f}(1) \circ \sigma} d\mu_{y}
 &
 = \lim_{n \to \infty} \frac{K_n(\mathscr{L}_{f}(\varphi \circ \sigma / \mathscr{L}_{f}(1) \circ \sigma),\sigma(y))}{K_n(\mathscr{L}_{f}(1),\sigma(y))}\\ & = \lim_{n \to \infty} \frac{K_n(\varphi),\sigma(y))}{K_n(\mathscr{L}_{f}(1),\sigma(y))}. 
 \end{align*}
Setting $\varphi=1$, we obtain that $\lim_{n} K_n(\mathscr{L}_{f}(1),\sigma(y))$ exists. Hence, by repeating the argument for general $\varphi$, it follows that also $\lim_{n} K_n(\varphi,\sigma(y))$ exists. Hence, $\sigma(y) \in \Omega'$
and consequently $\sigma^{-1}(\Omega') = \Omega'$. 

Hence, we have by \eqref{eq-aux1-quociente-Lnf}, that 
\[
 \int_{\Omega} \varphi d\mu_{y} = \frac{1}{\lambda_{\sigma y}} \int_{\Omega} \mathscr{L}_{f}(\varphi) d\mu_{\sigma y}  
\]
for $\mu$-almost $y$. In particular,  $\int_{\Omega} \varphi d\mu_{y} =  \int_{\Omega} \varphi d\mu_{\tilde{y}} $ whenever $\sigma(y) = \sigma(\tilde{y})$. In particular, the map $y \mapsto \int_{\Omega} \varphi d\mu_{y}$ is trivial with respect to $\mathscr{T}$ and therefore, as $\mu$ is extremal, constant. Furthermore, assume that $\lambda = \lambda_y$ a.s.. Then, 
by applying bounded convergence three times,
\begin{align*}
 \frac{1}{\lambda}\int_{\Omega} \mathscr{L}_{f}(\varphi) d\mu & = \lim_{n \to \infty}  \frac{1}{\lambda} \int_{\Omega} K_n(\mathscr{L}_{f}(\varphi))(y)  d\mu
=   \int_{\Omega} \lim_{n \to \infty} \frac{K_n(\mathscr{L}_{f}(\varphi))(y)}{K_n(\mathscr{L}_{f}(1))(y)}  d\mu \\
& = \int \lim_{n \to \infty} K_n(\varphi)(y) d\mu = \int \varphi d\mu.
\end{align*}
From Corollary \ref{cor:same lambda for all} it is a simple matter to check that $\lambda = \int_{\Omega} \mathscr{L}_{f}(1)\, d\mu$ is the spectral
radius of $\mathscr{L}_{f}$ acting on $C(\Omega)$ and therefore
$\mathscr{L}_{f}^\ast (\mu) = \lambda_f \mu$.
That is, $\mu\in \mathscr{G}^{*}(f)$.
\end{proof}

Observe that the combination of Theorem \ref{extreme vs exact}
with Lemma \ref{cheese} identifies the extreme elements in
$\mu \in\mathscr{G}^{DLR}(f)$ with the set of exact, conformal measures,
that is those elements in $ \mathscr{G}^{*}(f)$
for which $\mathscr{T}$ is trivial. In particular, after
extending the action of $\mathscr{L}_{f}$ to $L^1(\mu)$,
Lin's criterion provides a further characterization of these measure
as given in Corollary \ref{Lin} below.
As an immediate corollary of Lemma \ref{cheese}, we obtain the main
result of this section.

\begin{theorem} \label{mainDLR} Let $f\in C(\Omega)$ and $(K_n)_{n\in\mathbb{N}}$
as in \eqref{def-Kn-ruelle}. Then
$\mathscr{G}^{DLR}(f)= \mathscr{G}^{*}(f)$.
\end{theorem}

\begin{proof} By Lemma  \ref{lemma-star-subset-DLR},  for any continuous potential $f$,
we have  $\mathscr{G}^{*}(f)  \subset \mathscr{G}^{DLR}(f)$.
 On the other hand, Lemma \ref{cheese} ensures $\mathrm{ex}(\mathscr{G}^{DLR}(f))\subset \mathscr{G}^{*}(f)$. By
compactness and the Krein-Milman theorem it  follows that
$\mathscr{G}^{DLR}(f)\subset \mathscr{G}^{*}(f)$, thus proving the theorem.
\end{proof}

\section{Uniqueness Theorem for Eigenprobabilities}
\label{secao-teo-unicidade}
\begin{theorem}\label{teo-unicidade}
Let $f$ be a continuous potential and
$(K_n)_{n\in\mathbb{N}}$ be the specification defined as
in \eqref{def-Kn-ruelle}. Suppose that there is a constant $c>0$
such that for every cylinder set $F\in\mathscr{F}$
there is $n\in\mathbb{N}$ such that
\[
K_n(F,x)\geq c K_n(F,y)
\]
for all $x,y\in\Omega$.
Then, the set
$
\mathscr{G}^{*}(f)
$
has only one element.
\end{theorem}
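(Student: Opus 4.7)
The plan is to reduce the uniqueness statement to a triviality property of the DLR simplex. By Lemma \ref{lemma-star-subset-DLR} we have $\mathscr{G}^{*}(f)\subset\mathscr{G}^{DLR}(f)$, so it suffices to prove that $\mathscr{G}^{DLR}(f)$ is a singleton. The set $\mathscr{G}^{DLR}(f)$ is non-empty, convex and weakly compact (it is weakly closed in $\mathscr{P}(\Omega,\mathscr{F})$ thanks to the Feller property of the kernels $K_n$), so by the Krein--Milman theorem it is enough to show that it has at most one extreme point.

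The first step is to transfer the hypothesis from cylinders to the algebra $\mathcal{A}$ of all finite disjoint unions of cylinders. Given $F=\bigsqcup_{i=1}^{k}C_{i}\in\mathcal{A}$, take indices $n_{1},\ldots,n_{k}$ supplied by the hypothesis for $C_{1},\ldots,C_{k}$, and put $n=\max_{i}n_{i}$. Iterating the compatibility identity \eqref{eq-dlr} gives $K_{n}(C_{i},x)=\int K_{n_{i}}(C_{i},z)\,dK_{n}(z,x)$ for each $i$, so $K_{n}(C_{i},\cdot)$ takes values in $[\min K_{n_{i}}(C_{i},\cdot),\,\max K_{n_{i}}(C_{i},\cdot)]$ and therefore $K_{n}(C_{i},x)\geq c\,K_{n}(C_{i},y)$ for all $x,y$. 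Summing over $i$ yields $K_{n}(F,x)\geq c\,K_{n}(F,y)$ for all $x,y$. For any pair $\nu_{1},\nu_{2}\in\mathscr{G}^{DLR}(f)$ and any $F\in\mathcal{A}$ with associated $n$, the DLR equations then give
\begin{equation*}
\nu_{1}(F)=\int K_{n}(F,y)\,d\nu_{1}(y)\geq \min_{y}K_{n}(F,y)\geq c\,\max_{y}K_{n}(F,y)\geq c\,\nu_{2}(F),
\end{equation*}
and by symmetry $\nu_{2}(F)\geq c\,\nu_{1}(F)$.

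Now suppose for contradiction that $\mathscr{G}^{DLR}(f)$ admits two distinct extreme points $\mu_{1},\mu_{2}$. By Theorem \ref{teo-B} each $\mu_{i}$ is trivial on the tail $\sigma$-algebra $\bigcap_{n}\sigma^{n}\mathscr{F}$; combining this with the DLR identity $\mathbb{E}_{\mu_{i}}[1_{A}|\sigma^{n}\mathscr{F}]=K_{n}(A,\cdot)$ and the backward martingale theorem yields $K_{n}(A,\cdot)\to\mu_{i}(A)$ $\mu_{i}$-almost surely for every $A\in\mathscr{F}$, so $\mu_{1}$ and $\mu_{2}$ concentrate on disjoint tail-measurable sets (the sets on which the limit equals $\mu_{1}(A)$ versus $\mu_{2}(A)$ for some separating $A$), i.e.\ $\mu_{1}\perp\mu_{2}$. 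Choose $A\in\mathscr{F}$ with $\mu_{1}(A)=1$ and $\mu_{2}(A)=0$. Since $\mathcal{A}$ generates $\mathscr{F}$, for any $\epsilon>0$ there exists $F\in\mathcal{A}$ with $(\mu_{1}+\mu_{2})(F\triangle A)<\epsilon$, whence $\mu_{1}(F)>1-\epsilon$ and $\mu_{2}(F)<\epsilon$. The inequality $\mu_{2}(F)\geq c\,\mu_{1}(F)$ then forces $\epsilon>c(1-\epsilon)$, which fails for every $\epsilon<c/(1+c)$. This contradiction shows that $\mathscr{G}^{DLR}(f)$ has at most one extreme point, so $\mathscr{G}^{*}(f)\subset\mathscr{G}^{DLR}(f)$ reduces to a singleton.

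The main obstacle is the mutual-singularity step: to assert that two distinct extreme DLR measures must live on disjoint sets one needs Theorem \ref{teo-B} together with the backward martingale theorem applied to the kernels $K_{n}(A,\cdot)$, and one has to check carefully that tail-triviality forces $\mathbb{E}_{\mu}[1_{A}|\bigcap_{n}\sigma^{n}\mathscr{F}]$ to be the constant $\mu(A)$ $\mu$-a.s.\ for every extreme $\mu$. The remaining ingredients are bookkeeping: propagating the hypothesis across the algebra $\mathcal{A}$ via \eqref{eq-dlr}, and the standard approximation of an arbitrary $A\in\mathscr{F}$ by an element of the generating algebra $\mathcal{A}$ in $(\mu_{1}+\mu_{2})$-measure.
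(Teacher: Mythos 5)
Your proof is correct, but it follows a genuinely different route from the one the paper uses for this theorem. The paper's argument shows that \emph{every} $\mu\in\mathscr{G}^{DLR}(f)$ is tail-trivial (hence extreme): it takes a tail event $E_0$ with $\mu(E_0)>0$, uses the appendix results to see that the conditioned measure $\mu(\cdot\,|\,E_0)$ is again DLR, applies the hypothesis on cylinders plus the monotone class theorem to get $\mu(\cdot\,|\,E_0)\ge c\,\mu$ on all of $\mathscr{F}$, and concludes $\mu(E_0)=1$; uniqueness then follows because a set in which every element is extreme cannot contain a nontrivial convex combination. You instead compare two \emph{arbitrary} extreme DLR measures directly, derive $\nu_1(F)\ge c\,\nu_2(F)$ on the generating algebra (your propagation of the hypothesis to $n=\max_i n_i$ via the compatibility identity, and the $\inf/\sup$ sandwich, are both sound), and play this against the mutual singularity of distinct extreme points obtained from tail-triviality and backward martingale convergence. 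The two proofs use the same core estimate but different auxiliary machinery: the paper needs the conditioning construction $\mu\mapsto\mu(\cdot\,|\,E_0)$ from its appendix and gets the stronger intermediate statement that all DLR measures for such a specification are tail-trivial, while your argument needs only the singularity of distinct extremes and is in fact essentially the same scheme the paper deploys later for Theorem \ref{teo-unicidade-d1} (the Bowen-type condition), so your approach has the merit of unifying the two uniqueness proofs. Your closing approximation of a separating set $A$ by an element of the algebra in $(\mu_1+\mu_2)$-measure is a legitimate substitute for the paper's monotone class step.
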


\begin{proof}
Because of Lemma \ref{lemma-star-subset-DLR} it is enough to show that $\mathscr{G}^{DLR}(f)$
is a singleton.
Suppose that $\mathscr{G}^{DLR}(f)$ contains
two distinct elements $\mu$ and
$\nu$. Then the convex combination
$(1/2)(\mu+\nu)\in \mathscr{G}^{DLR}(f)\setminus \mathrm{ex}(\mathscr{G}^{DLR}(f))$,
where $\mathrm{ex}(\mathscr{G}^{DLR}(f))$ denotes the set of extreme measures
of $\mathscr{G}^{DLR}(f)$. Therefore it is sufficient to show that
$\mathscr{G}^{DLR}(f)\subset \mathrm{ex}(\mathscr{G}^{DLR}(f))$
.

Let $\mu \in \mathscr{G}^{DLR}(f)$, $E_0\in \mathscr{T}$
and suppose that $\mu(E_0)>0$. The existence of such set is ensured by the
Theorem 7.7 item (c) in \cite{MR2807681}, which says that any element
$\mu\in\ \mathscr{G}^{DLR}(f)$ is uniquely determined by its restriction to the
tail sigma-algebra $\mathscr{T}$ (see Corollary \ref{ap5} in the Appendix).
Since $\mu(E_0)>0$ the probability measure
$\nu\equiv \mu(\cdot|E_0)\in \mathscr{G}^{DLR}(f)$,
see Theorem 7.7 (b) in \cite{MR2807681} (or, see Proposition \ref{ap2} and Corollary \ref{ap4} in the  Appendix).

We now prove that for all $E\in\mathscr{F}$, we have
$\nu(E) \geq c\mu(E)$ for some $c>0$. Fix a cylinder set $F\in\mathscr{F}$.  Then,
for $n$ sufficiently large, it follows from the characterization of the DLR-Gibbs
measures and from the hypothesis that
\begin{align*}
\nu(F)
=
\int_{\Omega} K_{n}(F,x)\, d\nu(x)
&=
\int_{\Omega} \left[\int_{\Omega} K_{n}(F,x)\, d\nu(x)\right] d\mu(y)
\\
&\geq
c
\int_{\Omega} \left[\int_{\Omega} K_{n}(F,y)\, d\nu(x)\right] d\mu(y)
\\
&=
c\int_{\Omega} \left[\int_{\Omega} K_{n}(F,y)\, d\mu(y)\right]d\nu(x)
\\
&=
c \mu(F).
\end{align*}
Using the monotone class theorem we may conclude that for
all $E\in\mathscr{F}$ we have $\nu(E)\geq c\mu(E)$.
In particular, $
0=\nu(\Omega\setminus E_0)\geq c\mu(\Omega\setminus E_0)$
therefore $\mu(E_0)=1$. Consequently $\mu$ is trivial on
$\cap_{j\in\mathbb{N}}\mathscr{T}_{j}$.
Hence another application of Theorem 7.7 (a) of
\cite{MR2807681} (or, see Corollary \ref{ap4}) ensures that $\mu$ is extreme.
\end{proof}

As a  consequence of this theorem
we prove the uniqueness of the eigenmeasures for the dual of Ruelle operator
associated to a potential $f:\Omega\to\mathbb{R}$ satisfying the following conditions:

\begin{itemize}
	\item (Walters)
	\vspace*{-0.88cm}
	\begin{align}\label{Walters-condition}
	\lim_{d(x,y)\to 0}
	\sup_{n\in\mathbb{N}}\
	\sup_{a\in M^n}\ |S_{n}(f)(ax)-S_{n}(f)(ay)|=0;
	\end{align}
	
	\item (Bowen)
	\vspace*{-0.88cm}
	\begin{align}\label{Bowen-condition}
	D
	\equiv
	\sup_{n\in\mathbb{N}}\
	\sup_{ \substack{x,y\in\Omega; \\ x_i=y_i, i=1,\ldots,n}}\ |S_{n}(f)(x)-S_{n}(f)(y)|<\infty.
	\end{align}
\end{itemize}

Of course, a potential $f$ satisfying the Walters condition satisfies
the Bowen condition. What we are calling here Bowen's condition is actually a
generalization to uncountable alphabets of classical Bowen's condition, see \cite{MR1783787}.
\begin{theorem}\label{teo-unicidade-d1}
Let $f$ be a continuous potential satisfying
\[
	D
	\equiv
	\sup_{n\in\mathbb{N}}\
	\sup_{ \substack{x,y\in\Omega; \\ x_i=y_i, i=1,\ldots,n}}\ |S_{n}(f)(x)-S_{n}(f)(y)|<\infty
\]
then the set
$
\mathscr{G}^{*}(f)
$
is a singleton and  $\mathscr{G}^{*}(f) = \mathscr{G}^{DLR}(f)$.
\end{theorem}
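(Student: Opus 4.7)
The plan is to verify the hypothesis of Theorem \ref{teo-unicidade} with the constant $c = e^{-2D}$, and then conclude immediately. Fix a cylinder set $F$ which is $\mathscr{F}_m$-measurable for some $m\in\mathbb{N}$, and take any $n\geq m$. The first step is to rewrite the numerator and denominator of $K_n(F,x)$ using the integral form of the Ruelle operator:
\[
\mathscr{L}_f^{n}(1_F)(\sigma^n x)
=
\int_{M^n}\! e^{S_n(f)(a_1,\ldots,a_n,x_{n+1},x_{n+2},\ldots)}\,
1_F(a_1,\ldots,a_m)\,\prod_{i=1}^n dp(a_i),
\]
where I have used that $n\geq m$ so that $1_F$ only depends on $a_1,\ldots,a_m$ and in particular is independent of the tail of $x$.

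Next, I would exploit the Bowen-type hypothesis. For any $a=(a_1,\ldots,a_n)\in M^n$ and any $x,y\in\Omega$, the two points $(a_1,\ldots,a_n,x_{n+1},\ldots)$ and $(a_1,\ldots,a_n,y_{n+1},\ldots)$ agree on their first $n$ coordinates, so the hypothesis gives
\[
\bigl|S_n(f)(a_1,\ldots,a_n,x_{n+1},\ldots) - S_n(f)(a_1,\ldots,a_n,y_{n+1},\ldots)\bigr|\leq D.
\]
Exponentiating and integrating yields the two pointwise comparisons
\[
e^{-D}\,\mathscr{L}_f^{n}(1_F)(\sigma^n y)
\leq
\mathscr{L}_f^{n}(1_F)(\sigma^n x)
\leq
e^{D}\,\mathscr{L}_f^{n}(1_F)(\sigma^n y),
\]
and the analogous inequalities with $1_F$ replaced by $1$. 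Taking the ratio defining $K_n$, I obtain
\[
K_n(F,x) \geq e^{-2D}\, K_n(F,y)\qquad \forall x,y\in\Omega.
\]

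This verifies the hypothesis of Theorem \ref{teo-unicidade} with $c=e^{-2D}>0$, which is uniform in the cylinder $F$ (any choice of $n\geq m$ works), and the conclusion of that theorem gives that $\mathscr{G}^{*}(f)$ is a singleton. There is no real obstacle here; the only subtlety is making sure the cylinder $F$ is fixed of some finite depth $m$ and then choosing $n$ at least as large, so that the indicator factor pulls out of the dependence on the tail of the boundary condition $x$. The passage from cylinders to the full $\sigma$-algebra is handled inside the proof of Theorem \ref{teo-unicidade} itself (via the monotone class argument used there), so nothing further is needed at this stage.
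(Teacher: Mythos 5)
Your proof is correct: the kernel estimate you derive is exactly the ``claim'' established at the start of the paper's own proof (with the same constant $e^{2D}$ and the same use of the Bowen bound on $|S_n(f)(a_1\ldots a_n\sigma^n x)-S_n(f)(a_1\ldots a_n\sigma^n y)|$), and your observation that a cylinder of depth $m$ pulls out of the tail dependence once $n\geq m$ is the right justification. Where you diverge is in how uniqueness is extracted from that estimate. You feed the estimate into Theorem \ref{teo-unicidade} with $c=e^{-2D}$ and stop --- which is in fact what the paper's own prose promises (``A consequence of this theorem is the following generalization\dots''), but not what its written proof does. The paper instead runs an independent second half: it takes two distinct extreme elements of $\mathscr{G}^{DLR}(f)$, realizes them as thermodynamic limits of $K_n(\cdot,y)$ and $K_n(\cdot,z)$ via Theorem 7.12 of Georgii, passes the comparison through Urysohn approximations of indicators of open cylinders, upgrades $\mu(C)\leq e^{2D}\nu(C)$ to $\mu\leq e^{2D}\nu$ by a monotone class argument, and contradicts the mutual singularity of distinct extreme DLR measures. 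Your route is shorter and avoids all the topological regularization (Urysohn, approximation by continuous functions of bounded depth), at the price of leaning on the tail-conditioning argument inside Theorem \ref{teo-unicidade}; the paper's route is self-contained modulo Georgii's structure theory and yields the slightly more quantitative byproduct that any two DLR measures satisfy the two-sided domination $e^{-2D}\nu\leq\mu\leq e^{2D}\nu$. Both reductions end by invoking Lemma \ref{lemma-star-subset-DLR} to pass from $\mathscr{G}^{DLR}(f)$ back to $\mathscr{G}^{*}(f)$, which you correctly leave to the quoted theorem.
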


\begin{proof}
	Let $D$ be the constant as in the above theorem
	and $C$ a cylinder such that its basis is contained
	in the set $\{1,\ldots,p\}$, i.e., for every $n\geq p$
	we have $1_{C}(x_1\ldots x_n\sigma^{n}(z))=1_{C}(x_1\ldots x_n\sigma^{n}(y))$
	for all $y,z\in\Omega$ and $x_1,\ldots, x_n \in M$. We claim that for any choice of $y,z\in\Omega$ and for all
	$n\geq p$, we have
	\[
	e^{-2D}K_{n}(C,z)\leq K_{n}(C,y)\leq e^{2D}K_{n}(C,z).
	\]	
By definition of $D$ we have,
uniformly in $n\in\mathbb{N}$, $x,y,z\in\Omega$,
the following inequality
$
-D
\leq
S_n(f)(x_1\ldots x_n\sigma^{n}(z))-S_n(f)(x_1\ldots x_n\sigma^{n}(y))
\leq
D
$
which immediately imply the inequalities
$
\exp(-D)\exp(-S_n(f)(x_1\ldots x_n\sigma^{n}(z)))
\leq
\exp(-S_n(f)(x_1\ldots x_n\sigma^{n}(y)))
$
and
$	
\exp(-S_n(f)(x_1\ldots x_n\sigma^{n}(y)))		
\leq		
\exp(D)\exp(-S_n(f)(x_1\ldots x_n\sigma^{n}(z))).
$
Using theses two previous inequalities we get that
\begin{align}\label{eq-des-Lnfxy}
e^{-D}\mathscr{L}_{f}^{n}(1)(\sigma^{n}(z))
\leq
\mathscr{L}_{f}^{n}(1)(\sigma^{n}(y))
\leq
e^{D}\mathscr{L}_{f}^{n}(1)(\sigma^{n}(z))
\end{align}
and also
\begin{align*}
K_n(C,y)
=
\frac{\mathscr{L}_{f}^{n}(1_{C})(\sigma^{n}(y))}{\mathscr{L}_{f}^{n}(1)(\sigma^{n}(y))}
\leq
\frac{e^D \mathscr{L}_{f}^{n}(1_{C})(\sigma^{n}(z))}
{e^{-D}\mathscr{L}_{f}^{n}(1)(\sigma^{n}(z)) }
=
e^{2D}	K_n(C,z).
\end{align*}
Analogously we obtain $e^{-2D}K_{n}(C,z)\leq K_{n}(C,y)$ and
so the claim is proved.

Let $\mu$ and $\nu$ be distinct extreme measures in $\mathscr{G}^{DLR}(f)$.
Since we are assuming that $M$ is compact, it follows from Theorem 7.12 of \cite{MR2807681}
that there exist $y,z\in\Omega$ such that both
measures $\mu$ and $\nu$ are thermodynamic limits of
$K_n(\cdot,y)$ and $K_n(\cdot,z)$, respectively, when
$n\to\infty$.
Given an open cylinder set $C$ such that its basis is contained
in the set $\{1,\ldots,p\}$ there is an increasing sequence of closed cylinders
$C_1\subset C_2\subset\ldots$ such that for all $k\in\mathbb{N}$
the basis of $C_k$ is contained in the set $\{1,\ldots,p\}$,
and $\bigcup_{k\in\mathbb{N}} C_k = C$.
By Urysohn's lemma for each $k\in\mathbb{N}$ there is a continuous function
$\varphi_k:\Omega\to [0,1]$ such that $1_{C_k}\leq \varphi_k\leq 1_{C}$
and $\varphi_k\to 1_{C}$ pointwise. Since
$C$ and $(C_{k})_{k\in\mathbb{N}}$ have their basis contained in $\{1,\ldots,p\}$,
then the function $\varphi_k$
can be chosen as a continuous function depending only on its first $p$ coordinates.

By using the claim and a standard approximation arguments we get, for any fixed $k$,
the inequality
$
K_{n}(\varphi_{k},y)
\leq
e^{2D} K_{n}(\varphi_k,z)
$
for all $n\geq p$. By taking the limits, when $n$ goes to infinity
and next when $k$ goes to infinity we get
$
\mu(C) \leq  e^{2D} \nu(C).
$
Clearly the collection
$\mathscr{D}=\{E\in\mathscr{F}: \mu(E)\leq e^{2D\beta}\nu(E) \}$
is a monotone class. Since it contains the open cylinder sets,
which is stable under intersections,
we have that $\mathscr{D}=\mathscr{F}$.
Therefore $\mu \leq e^{2D\beta}\nu $, in particular $\mu\ll\nu$.
This contradicts the fact that two distinct extreme DLR-Gibbs measures
are mutually singular, therefore $\mathscr{G}^{DLR}(f)$ is a singleton
and by Lemma \ref{lemma-star-subset-DLR} we are done.
\end{proof}

This result generalizes two conditions for uniqueness
presented in two recent works by the authors when general
compact state space $M$ is considered, see
\cite{cioletti-silva-2016} and \cite{MR3377291}.
In fact, the above theorem
generalizes the H\"older, Walters (weak and stronger as introduced in
\cite{cioletti-silva-2016}) and Bowen conditions because
it can be applied for potentials defined
on $\Omega=M^{\mathbb{N}}$, where
the state space $M$ is any general compact metric space.

\subsection{Dobrushin Uniqueness Theorem}

In this section we prove an uniqueness theorem in the high temperature regime ($\beta$ small) for potentials not satisfying Bowen's condition in \eqref{Bowen-condition}. This result applies
for a very large class of potentials which live outside the H\"older, Walters and Bowen spaces.
Its proof is  based on the Dobrushin Uniqueness Theorem suitably adapted to our setting.

For each positive integer $n$ let $\Lambda_n\subset \mathbb{N}$
be a finite set such that $1\in \Lambda_n$. Denote by
$\pi_{\Lambda_n}:\Omega\to M^{\Lambda_n}$ the natural projection from $\Omega$
onto $ M^{\Lambda_n}$. For each $n\geq 1$ let $f_n:M^{|\Lambda_n|}\to \mathbb{R}$ be a continuous function and
suppose that $\sum_{n=1}^{\infty}\|f_n\|_{\infty}<+\infty$.
Consider the continuous potential $f:\Omega\to\mathbb{R}$ given by
\[
f(x) = \sum_{n=1}^{\infty} f_n\circ \pi_{\Lambda_n}(x).
\]

The next theorem is a version of Dobrushin's Theorem for Thermodynamic Formalism. We point out that in   \cite{Cioletti20176139} it was described a natural way to connect the classical  setting of Thermodynamic with interactions, specifications, etc... (which is more close to the classical setting of Statistical Mechanics). We will follow such point of view here.

\begin{theorem}
Let $f$ be as  above and suppose that
$\sum_{n\geq 1}  |\Lambda_n| \|f_n\|_{\infty}<+\infty$.
Then there exists $\beta_{D}\in (0,\infty)$ such that for any
$\beta<\beta_{D}$, the set $\mathscr{G}^{*}(\beta f)$ is a singleton.
\end{theorem}

\begin{proof}
Consider the interaction $\Phi\equiv (\Phi_{\Lambda})_{\Lambda\subset \mathbb{N}}$
given by: $\Phi_{\Lambda}\equiv 0$ if $\Lambda \neq k+\Lambda_n$ for some $k,n\in\mathbb{N}$;
$\Phi_{\Lambda_n}(x)= f_n\circ \pi_{\Lambda_n}(x)$ and
$\Phi_{k+\Lambda_n}(x)= f_n\circ \pi_{k+\Lambda_n}(x)$.

Note that
\[
H^{\Phi}_{\{1,\ldots,n\}}(x)
=
\sum_{\substack {\Lambda\cap \{1,\ldots,n\} \neq \emptyset\\ 0<|\Lambda|<+\infty }}
\Phi_{\Lambda}(x)
=
\sum_{i=1}^{n}
\sum_{\substack {\Lambda\ni i \\ 0<|\Lambda|<+\infty }}
\Phi_{\Lambda}(x)
=
S_n(f)(x).
\]
Let $\gamma^{\Phi}$ be the specification determined by $H^{\Phi}_{\Lambda}$.
Since the DLR Gibbs measures are completely determined in a cofinal collection
of volumes, we have that $\mathscr{G}(\gamma^{\beta\Phi})=\mathscr{G}^{DLR}(\beta f)$.

From the construction of $\Phi$ we have
\[
\sup_{i\in\mathbb{N}}\sum_{i\ni \Lambda} (|\Lambda|-1)\|\Phi_{\Lambda}\|_{\infty}
=
\sum_{n\geq 1} (|\Lambda_n|-1)\|f_{n}\|_{\infty}
<+\infty
\]
and it therefore follows from the Dobrushin Uniqueness Theorem (see \cite{MR2807681},  Theorem 8.7 and Proposition 8.8) that $\mathscr{G}(\gamma^{\beta\Phi})$ is a singleton whenever
\[
\beta < \beta_{D}\equiv \frac{2}{\sum_{n\geq 1} (|\Lambda_n|-1)\|f_{n}\|_{\infty}}.
\]

Since Lemma \ref{lemma-star-subset-DLR} ensures that for any continuous potential $f$
we have $\mathscr{G}^{*}(\beta f)\subset \mathscr{G}^{DLR}(\beta f)$ the result follows.
\end{proof}

We apply the above theorem in the following case. We take $E=\{-1,1\}$,
the a priori measure $p$ as the normalized counting measure and fix $0<\varepsilon<1$.
Consider the potential
\[
f(x) = \sum_{n=2}^{\infty}\frac{x_1x_{n}}{n^{1+\varepsilon}}.
\]

In the literature this potential is sometimes called Dyson potential (see \cite{Cioletti20176139}).
In this case, for any $\beta>0$ we have
\[
D
\equiv
\sup_{n\in\mathbb{N}}\
\sup_{ \substack{x,y\in\Omega; \\ x_i=y_i, i=1,\ldots,n}}\ |S_{n}(\beta f)(x)-S_{n}(\beta f)(y)|=\infty.
\]
The above equality implies that $f$ is not in H\"older, Walters and Bowen spaces.
Of course, this potential can be rewritten as
$
f(x) = \sum_{n=1}^{\infty} f_n\circ \pi_{\Lambda_n}(x),
$
by taking $\Lambda_n=\{1,n\}$ and $f_n\circ\pi_{\Lambda_n}(x) = x_1x_n/n^{1+\varepsilon}$.
Then
\[
\sum_{n\geq 1} (|\Lambda_n|-1)\|f_{n}\|_{\infty}
=
\sum_{n\geq 1} \frac{1}{n^{1+\varepsilon}} = \zeta(1+\varepsilon).
\]
Now applying the above theorem (taking $|\Lambda_n|=2$) we get that $\mathscr{G}^{*}(\beta f)$
is a singleton for any choice of $\beta< 2\zeta(1+\varepsilon)^{-1}$.

By taking $\Lambda_n=\{1,n,n+1\}$, $n \geq 2$,  one can get results for potentials of the form
$
f(x) = \sum_{n=2}^{\infty}x_1x_{n} x_{n+1}/n^{1+\varepsilon}
$
or more generally for potentials of the form
$
f(x) = \sum_{n=2}^{\infty}\, a_n\, x_1\,x_{n} \,x_{n+1},
$
where $\sum a_n$ is absolutely convergent.
In this case, to estimate the critical temperature, one considers the expression $\sum_{n\geq 1} 3\|f_{n}\|_{\infty}.$

\section{The Extension of the Ruelle Operator to the Lebesgue Space
$\pmb{L^{1}(\Omega,\mathscr{F},\nu_f)}$}\label{secao-ruelle-L1}

Let $f$ be a fixed continuous potential and $\nu_f$ the Borel probability measure obtained above. In this section we show how to construct a
bounded linear extension of the operator
$\mathscr{L}_f: C(\Omega)\to C(\Omega)$
acting on $L^{1}(\Omega,\mathscr{F},\nu_f)$,
by abusing notation also called $\mathscr{L}_f$, and under suitable assumptions
prove the existence of an almost surely non-negative eigenfunction
$\varphi_f\in L^1(\Omega,\mathscr{F},\nu_f)$
associated to the eigenvalue $\lambda_f$ constructed in the previous section.
\begin{proposition}\label{prop-ext-Lf-L1}
	Fix a continuous potential $f$ and let $\lambda_f$ and $\nu_f$ be the
	eigenvalue and eigenmeasure of $\mathscr{L}^{*}$,
	respectively.
	Then
	the Ruelle operator $\mathscr{L}_{f}:C(\Omega)\to C(\Omega)$
	can be uniquely extended to a bounded linear operator
	$
	\mathscr{L}_{f}:
	L^1(\Omega,\mathscr{F},\nu_f)
	\to
	L^1(\Omega,\mathscr{F},\nu_f)
	$.
	Moreover, this extension has operator norm
	$\|\mathscr{L}_{f}\|=\lambda_{f}$.
\end{proposition}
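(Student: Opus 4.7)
The plan is to first bound $\|\mathscr{L}_{f}\varphi\|_{L^{1}(\nu_f)}$ by $\lambda_{f}\|\varphi\|_{L^{1}(\nu_f)}$ for $\varphi\in C(\Omega)$, then invoke density of $C(\Omega)$ in $L^{1}(\Omega,\mathscr{F},\nu_f)$ (which holds because $\nu_f$ is a regular Borel probability measure on the compact metric space $\Omega$, so continuous functions are $L^{1}$-dense via Lusin/Tietze) to produce a unique bounded linear extension with operator norm at most $\lambda_f$.

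For the upper bound, I would exploit positivity of the Ruelle operator: since $e^{f(ax)}\geq 0$, for any $\varphi\in C(\Omega)$ the pointwise inequality $|\mathscr{L}_{f}\varphi(x)|\leq \mathscr{L}_{f}|\varphi|(x)$ holds trivially. Integrating against $\nu_f$ and using the duality relation \eqref{eq-dualidade} together with the eigenmeasure identity $\mathscr{L}^{*}_{f}\nu_f=\lambda_f\nu_f$ yields
\[
\int_{\Omega}|\mathscr{L}_{f}\varphi|\,d\nu_f
\leq
\int_{\Omega}\mathscr{L}_{f}|\varphi|\,d\nu_f
=
\int_{\Omega}|\varphi|\,d[\mathscr{L}^{*}_{f}\nu_f]
=
\lambda_f\int_{\Omega}|\varphi|\,d\nu_f.
\]
Hence $\mathscr{L}_f$ is bounded from the dense subspace $C(\Omega)\subset L^{1}(\nu_f)$ into $L^{1}(\nu_f)$ with norm $\leq\lambda_f$, and the standard BLT (bounded linear transformation) extension theorem produces the required unique bounded linear operator on $L^{1}(\nu_f)$.

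For the reverse inequality I would simply test the operator on the constant function $\varphi\equiv 1$. Since $\mathscr{L}_{f}(1)(x)=\int_{M}e^{f(ax)}\,dp(a)>0$, we have $|\mathscr{L}_{f}(1)|=\mathscr{L}_{f}(1)$, and the eigenmeasure identity gives
\[
\|\mathscr{L}_{f}(1)\|_{L^{1}(\nu_f)}
=
\int_{\Omega}\mathscr{L}_{f}(1)\,d\nu_f
=
\int_{\Omega}1\,d[\mathscr{L}^{*}_{f}\nu_f]
=
\lambda_f
=
\lambda_f\|1\|_{L^{1}(\nu_f)},
\]
forcing $\|\mathscr{L}_{f}\|_{L^{1}(\nu_f)}\geq\lambda_f$, and thus equality.

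The only nontrivial point is to justify the density of $C(\Omega)$ in $L^{1}(\Omega,\mathscr{F},\nu_f)$, which follows from general measure theory on compact metric spaces (every finite Borel measure on a compact metric space is Radon, and hence continuous functions are dense in $L^{p}$ for $1\leq p<\infty$). Everything else is routine once positivity of $\mathscr{L}_f$ and the duality relation are combined with the eigenmeasure equation; there is no genuine analytic obstacle, only the careful bookkeeping needed to verify that the extension remains compatible with the dual relation already established on $C(\Omega)$.
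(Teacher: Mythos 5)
Your proposal is correct and follows essentially the same route as the paper: positivity of $\mathscr{L}_f$ plus the duality relation and the eigenmeasure identity give the bound $\lambda_f\|\varphi\|_{L^1}$ on the dense subspace $C(\Omega)$, and testing on $\varphi\equiv 1$ yields the reverse inequality. The only cosmetic difference is that you use the pointwise inequality $|\mathscr{L}_f\varphi|\le\mathscr{L}_f|\varphi|$ directly, whereas the paper decomposes $\varphi=\varphi^{+}-\varphi^{-}$ first; these are equivalent.
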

\begin{proof}
If $\varphi\in C(\Omega)$ then
$\varphi^{\pm}\equiv \max\{0,\pm\varphi\}\in C(\Omega)$.
Therefore, it follows from the positivity of the Ruelle operator and
\eqref{eq-dualidade} that
\begin{align*}
\|\mathscr{L}_{f}(\varphi)\|_{L^1}
&=
\int_{\Omega} |\mathscr{L}_{f}(\varphi^{+}-\varphi^{-})| \, d\nu_f
\leq
\int_{\Omega} |\mathscr{L}_{f}(\varphi^{+})|+ |\mathscr{L}_{f}(\varphi^{-})| \, d\nu_f
\\
&=
\int_{\Omega} \mathscr{L}_{f}(\varphi^{+}) +\mathscr{L}_{f}(\varphi^{-}) \, d\nu_f
=
\int_{\Omega} (\varphi^{+}+\varphi^{-}) \, d(\mathscr{L}^{*}_{f}\nu_f)
\\
&=
\lambda_{f}\int_{\Omega} (\varphi^{+}+\varphi^{-}) \, d\nu_f
=
\lambda_{f}\int_{\Omega} |\varphi| \, d\nu_f
\\
&=
\lambda_{f}\|\varphi\|_{L^1}.
\end{align*}
Since $\Omega$ is a compact Hausdorff space we have
\[
\overline{C(\Omega,\mathbb{R})}^{L^1(\Omega,\mathscr{F},\nu_f)}
=
L^1(\Omega,\mathscr{F},\nu_f),
\]
and  $\mathscr{L}_f$ therefore admits a unique continuous extension
to $L^1(\Omega,\mathscr{F},\nu_f)$.
By taking $\varphi\equiv 1$ it is easy to see that
$\|\mathscr{L}_{f}\|=\lambda_f$.
\end{proof}

\begin{proposition} \label{prop:extension to L1}
	For any fixed potential $f\in C(\Omega)$ we have that
	\[
	L^1(\Omega,\mathscr{F},\nu_f)
	=
	\Xi(f)
	\equiv
	\left\{
	\varphi\in  L^{1}(\Omega,\mathscr{F},\nu_{f}):
	\int_{\Omega} \mathscr{L}_{f}(\varphi)\, d\nu_{f} = \lambda_{f} \int_{\Omega}\varphi  d\nu_{f}
	\right\}.
	\]
\end{proposition}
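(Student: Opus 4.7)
The plan is to exploit two facts already established: the duality relation holds on $C(\Omega)$ by construction of $\nu_f$, and the extended operator $\mathscr{L}_f$ on $L^1(\Omega,\mathscr{F},\nu_f)$ is a bounded linear map of norm $\lambda_f$ by the preceding proposition. Since $C(\Omega)$ is dense in $L^1(\Omega,\mathscr{F},\nu_f)$, it will suffice to show that $\Xi(f)$ is a closed subset containing $C(\Omega)$.

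First I would check that $C(\Omega)\subset \Xi(f)$. For $\varphi\in C(\Omega)$ the identity $\mathscr{L}^*_f\nu_f=\lambda_f\nu_f$ combined with the duality relation \eqref{eq-dualidade} gives
\[
\int_{\Omega}\mathscr{L}_f(\varphi)\,d\nu_f = \int_{\Omega}\varphi\,d[\mathscr{L}^*_f\nu_f] = \lambda_f\int_{\Omega}\varphi\,d\nu_f,
\]
so continuous functions lie in $\Xi(f)$.

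Next I would argue that $\Xi(f)$ is closed in $L^1(\Omega,\mathscr{F},\nu_f)$. Consider the linear functional $T:L^1(\Omega,\mathscr{F},\nu_f)\to\mathbb{R}$ given by
\[
T(\varphi)=\int_{\Omega}\mathscr{L}_f(\varphi)\,d\nu_f - \lambda_f\int_{\Omega}\varphi\,d\nu_f.
\]
Both terms are continuous in $\varphi$: the first because $\mathscr{L}_f$ has operator norm $\lambda_f$ on $L^1$ (Proposition \ref{prop-ext-Lf-L1}) and integration against a probability measure has norm one, yielding $|\int \mathscr{L}_f(\varphi)\,d\nu_f|\leq \lambda_f\|\varphi\|_{L^1}$; the second trivially satisfies $|\lambda_f\int \varphi\,d\nu_f|\leq \lambda_f\|\varphi\|_{L^1}$. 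Hence $T$ is bounded (with $\|T\|\leq 2\lambda_f$), and $\Xi(f)=T^{-1}(\{0\})$ is closed.

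Finally, because $C(\Omega)$ is dense in $L^1(\Omega,\mathscr{F},\nu_f)$ (a standard consequence of the Riesz representation theorem together with Lusin's theorem applied to Radon measures on the compact Hausdorff space $\Omega$, as already invoked in the proof of Proposition \ref{prop-ext-Lf-L1}), the closed set $\Xi(f)$ containing the dense subset $C(\Omega)$ must coincide with the whole space, giving the claimed equality. There is no serious obstacle here; the only point requiring care is the continuity of $\varphi\mapsto \int\mathscr{L}_f(\varphi)\,d\nu_f$ in the $L^1$ norm, and this is exactly what the preceding proposition delivers.
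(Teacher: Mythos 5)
Your proof is correct and follows essentially the same route as the paper: both establish $C(\Omega)\subset\Xi(f)$ via the duality relation and then deduce $\Xi(f)=L^1(\Omega,\mathscr{F},\nu_f)$ from density of $C(\Omega)$ together with closedness of $\Xi(f)$, the latter resting on the $L^1$-boundedness of $\mathscr{L}_f$ from Proposition \ref{prop-ext-Lf-L1}. Your packaging of the closedness step as the kernel of the bounded linear functional $T$ is a slightly cleaner formulation of the explicit sequential argument the paper gives, but the underlying estimate is identical.
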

\begin{proof}
From \eqref{eq-dualidade} it follows that
$C(\Omega)\subset \Xi(f)$. Let $\{\varphi_n\}_{n\in\mathbb{N}}$
be a sequence in $C(\Omega)$ such that
$\varphi_n \to \varphi$ in $L^{1}(\Omega,\mathscr{F},\nu_{f})$. Then
\[
\left|
\int_{\Omega} \varphi_n \, d\nu_{f} - \int_{\Omega} \varphi \, d\nu_{f}
\right|
\leq
\int_{\Omega} |\varphi_n-\varphi| \, d\nu_{f} \
\xrightarrow{n\to\infty} 0
\]
and using the boundedness of $\mathscr{L}_{f}$, we can
also conclude that
\[
\left|
\int_{\Omega}\!\! \mathscr{L}_{f}(\varphi_n) \, d\nu_{f}
-\!\!\!
\int_{\Omega}\!\! \mathscr{L}_{f}(\varphi) \, d\nu_{f}
\right|
\leq
\!
\int_{\Omega}\! |\mathscr{L}_{f}(\varphi_n-\varphi)| \, d\nu_{f}
\leq
\lambda_{f}\|\varphi_n-\varphi\|_{L^1}
\xrightarrow{n\to\infty} 0.
\]
By using the above
convergences and the triangular inequality
we can see that $\Xi(f)$ is closed subset of
$L^1(\Omega,\mathscr{F},\nu_f)$
. Indeed,
\begin{multline*}
\left|
\int_{\Omega} \mathscr{L}_{f}(\varphi) \, d\nu_{f}
-
\lambda_f\int_{\Omega} \varphi \, d\nu_{f}
\right|
\leq
\\
\left|
\int_{\Omega} \mathscr{L}_{f}(\varphi) \, d\nu_{f}
-
\int_{\Omega} \mathscr{L}_{f}(\varphi_n) \, d\nu_{f}
+
\int_{\Omega} \mathscr{L}_{f}(\varphi_n) \, d\nu_{f}
-
\lambda_f\int_{\Omega} \varphi \, d\nu_{f}
\right|
\end{multline*}
and the rhs goes to zero when $n\to\infty$ therefore $\varphi\in \Xi(f)$.
Since $C(\Omega,\mathbb{R})\subset \Xi(f)$ and $\Xi(f)$
is closed in $L^1(\Omega,\mathscr{F},\nu_f)$ we have that
\[
L^1(\Omega,\mathscr{F},\nu_f)
=
\overline{C(\Omega,\mathbb{R})}^{L^1(\Omega,\mathscr{F},\nu_f)}
\subset
\overline{\Xi(f)}^{L^1(\Omega,\mathscr{F},\nu_f)}
=\Xi(f)
\subset
L^1(\Omega,\mathscr{F},\nu_f).
\qedhere
\]
\end{proof}
The above proposition also implies that the extension of $\mathscr{L}_{f}/\lambda_f$ to $L^1(\Omega,\mathscr{F},\nu_f)$ can be identified with the transfer operator associated to a non-singular measure as formulated in the following corollary.

\begin{corollary} For any $\varphi \in L^1(\Omega,\mathscr{F},\nu_f)$ and $\psi \in L^\infty(\Omega,\mathscr{F},\nu_f)$,
\[ \int  {\lambda_f}^{-1} \mathscr{L}_{f}(\varphi) \; \psi d\nu_f =   \int  \varphi \; \psi\circ \sigma d\nu_f.\]
\end{corollary}

\begin{proof} Observe that
$\mathscr{L}_{f}(\varphi ) \psi = \mathscr{L}_{f}(\varphi\cdot \psi\circ \sigma)$
for  $\varphi, \psi \in C(\Omega)$.
In order to extend this property to the situation of the Corollary
that $(\psi_n)$ is a sequence of uniformly bounded functions in
$C(\Omega)$ which converges almost surely to
$\psi \in L^\infty(\Omega,\mathscr{F},\nu_f)$.
Then, for $\varphi \in C(\Omega)$,
it follows from bounded convergence and  Proposition \ref{prop:extension to L1} that
\begin{align*}
\int  \mathscr{L}_{f}(\varphi) \; \psi d\nu_f  & = \lim_{n \to \infty}  \int  \mathscr{L}_{f}(\varphi) \; \psi_n d\nu_f = \lim_{n \to \infty}  \int  \mathscr{L}_{f}(\varphi  \;  \psi_n \circ \sigma) d\nu_f  \\
& = \lim_{n \to \infty} \lambda_f \int  \varphi \; \psi_n   \circ \sigma) d\nu_f = \lambda_f \int  \varphi\; \psi \circ \sigma d\nu_f \\
&= \int  \mathscr{L}_{f}(\varphi \; \psi \circ \sigma)  d\nu_f
\end{align*}
The assertion then follows as in the proof of Prop. \ref{prop:extension to L1} by approximation of  $\varphi$ by continuous functions in $L^1(\Omega,\mathscr{F},\nu_f)$.
\end{proof}

As a corollary of a theorem by Lin (Th. 4.1 in \cite{Lin}, or Th. 1.3.3 in \cite{Aaronson}), the extremal DLR-measures hence can be characterized through the convergence of Ruelle's operator.

\begin{corollary} \label{Lin} The DLR-Gibbs measure $\nu_f$ is extremal if and only if for any $\varphi$ in the space $L^1(\Omega,\mathscr{F},\nu_f)$ with $\int_{\Omega} \varphi\, d\nu_f =0$, we have that
\[ \lim_{n \to \infty} \| \lambda_f^{-n}\mathscr{L}^n_{f}(\varphi) \|_1 =0 .\]
\end{corollary}

\section{Strong Convergence of Ruelle Operators}
\label{secao-convergencia-L1-operador-ruelle}

\begin{proposition}
\label{prop-convergencia-Lfn-Lf}
For any fixed potential $f\in C(\Omega)$ there is a sequence
$(f_n)_{n\in\mathbb{N}}$ contained in $C^{\gamma}(\Omega)$
such that $\|f_n-f\|_{\infty}\to 0$.
Moreover, for any eigenmeasure $\nu_f$ associated to the eigenvalue $\lambda_f$
we have that $\mathscr{L}_{f_n}$ has a unique continuous
extension to an operator defined on $L^1(\Omega,\mathscr{F},\nu_f)$
and moreover, in the uniform operator norm,
$\|\mathscr{L}_{f_n}-\mathscr{L}_{f}\|_{L^1(\Omega,\mathscr{F},\nu_f)}\to 0$,
when $n\to\infty$.
\end{proposition}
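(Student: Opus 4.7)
The first assertion is a restatement of the Stone--Weierstrass argument already used just before Proposition~\ref{prop-pressao-lip}: $C^{\gamma}(\Omega)$ is a subalgebra of $C(\Omega)$ that separates points and contains the constants, hence is uniformly dense, so an approximating sequence $(f_n)_{n\in\mathbb{N}}\subset C^{\gamma}(\Omega)$ with $\epsilon_n \equiv \|f_n-f\|_\infty \to 0$ exists. I will fix such a sequence and focus on the second assertion.

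The plan is to establish a single pointwise comparison between $\mathscr{L}_{f_n}$ and $\mathscr{L}_{f}$ that will simultaneously produce the $L^1(\nu_f)$-boundedness of $\mathscr{L}_{f_n}$ (hence its unique continuous extension) and the operator-norm convergence. Starting from the elementary inequality $|e^u-e^v|\le e^{v}(e^{|u-v|}-1)$, valid for all real $u,v$, I would pull out $e^{f(ax)}$ from the integrand defining $\mathscr{L}_{f_n}(\varphi)(x)$ and bound the remaining factor uniformly in $a$ and $x$ by $e^{\epsilon_n}-1$, obtaining, for every $\varphi\in C(\Omega)$ and every $x\in\Omega$,
\[
|(\mathscr{L}_{f_n}\varphi)(x)-(\mathscr{L}_{f}\varphi)(x)|\le (e^{\epsilon_n}-1)\,\mathscr{L}_{f}(|\varphi|)(x),
\qquad
|(\mathscr{L}_{f_n}\varphi)(x)|\le e^{\epsilon_n}\,\mathscr{L}_{f}(|\varphi|)(x).
\]
The next step is to integrate these inequalities against $\nu_f$ and invoke the eigenmeasure identity $\int \mathscr{L}_f(|\varphi|)\,d\nu_f = \lambda_f \|\varphi\|_{L^1(\nu_f)}$, which is \eqref{eq-dualidade} applied to $\psi=|\varphi|$ together with $\mathscr{L}_f^{*}\nu_f=\lambda_f\nu_f$. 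The second bound then yields $\|\mathscr{L}_{f_n}\varphi\|_{L^1(\nu_f)}\le e^{\epsilon_n}\lambda_f \|\varphi\|_{L^1(\nu_f)}$, and since $C(\Omega)$ is dense in $L^1(\Omega,\mathscr{F},\nu_f)$ this produces a unique bounded linear extension of $\mathscr{L}_{f_n}$ to $L^1(\nu_f)$, exactly as in the proof of Proposition~\ref{prop-ext-Lf-L1}. The first bound, treated the same way, gives
\[
\|\mathscr{L}_{f_n}\varphi-\mathscr{L}_{f}\varphi\|_{L^1(\nu_f)}\le (e^{\epsilon_n}-1)\lambda_f\,\|\varphi\|_{L^1(\nu_f)}
\qquad \forall\,\varphi\in C(\Omega),
\]
which passes to the $L^1(\nu_f)$-closure and yields $\|\mathscr{L}_{f_n}-\mathscr{L}_{f}\|_{L^1(\nu_f)}\le (e^{\epsilon_n}-1)\lambda_f \to 0$.

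The main conceptual point, and in my view the only substantive subtlety, is that $\nu_f$ is \emph{not} an eigenmeasure for $\mathscr{L}_{f_n}^{*}$; consequently Proposition~\ref{prop-ext-Lf-L1} cannot be applied directly to extend $\mathscr{L}_{f_n}$ on $L^1(\nu_f)$ or to estimate its norm. The role of the pointwise comparison above is precisely to reduce every $L^1(\nu_f)$-integrability question for $\mathscr{L}_{f_n}$ back to one for $\mathscr{L}_{f}$, whose dual does have $\nu_f$ as eigenmeasure. Once this reduction is in place, both the existence of the extension and its uniform convergence follow from a single estimate with controlled multiplicative constant $(e^{\epsilon_n}-1)\lambda_f$, which tends to zero by the choice of $(f_n)_{n\in\mathbb{N}}$.
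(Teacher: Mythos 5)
Your proof is correct and follows essentially the same route as the paper: the paper realizes the extension via the identity $\mathscr{L}_{f_n}(\varphi)=\mathscr{L}_{f}(e^{f_n-f}\varphi)$ and the $L^1(\nu_f)$-boundedness of $\mathscr{L}_f$ from Proposition~\ref{prop-ext-Lf-L1}, which is exactly the reduction your pointwise inequality encodes. Your observation that $\nu_f$ is not an eigenmeasure for $\mathscr{L}_{f_n}^{*}$, so the extension must be routed through $\mathscr{L}_f$, correctly identifies the one subtlety, and your explicit constant $(e^{\epsilon_n}-1)\lambda_f$ matches the paper's final estimate.
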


\begin{proof}
The first statement is a direct consequence of the Stone-Weierstrass Theorem.

For any $\varphi\in L^1(\Omega,\mathscr{F},\nu_f)$
the extension of $\mathscr{L}_{f_n}$ is given
by
$\mathscr{L}_{f_n}(\varphi)\equiv \mathscr{L}_{f}(\exp(f_n-f)\varphi)$
which is well-defined due to Proposition \ref{prop-ext-Lf-L1}.
From this proposition we also obtain
the inequality:
\begin{align*}
\int_{\Omega} |\mathscr{L}_{f_n}(\varphi)|\, d\nu_f
& =
\int_{\Omega} |\mathscr{L}_{f}(\exp(f_n-f)\varphi)|\, d\nu_f
\\
& \leq
\lambda_f \|\exp(f_n-f)\|_{\infty} \|\varphi\|_{L^1(\Omega,\mathscr{F},\nu_f)}
<\infty.
\end{align*}
Since the distance in the uniform operator norm
between $\mathscr{L}_{f_n}$ and $\mathscr{L}_{f}$ can be
 bounded from above by
\begin{align*}
\|\mathscr{L}_{f_n}-\mathscr{L}_{f}\|_{L^1(\Omega,\mathscr{F},\nu_f)}
&=
\sup_{0<\|\varphi\|_{L^1}\leq 1}
\int_{\Omega}
|\mathscr{L}_{f_n}(\varphi)-\mathscr{L}_{f}(\varphi)|\, d\nu_f
\\
&\leq
\sup_{0<\|\varphi\|_{L^1}\leq 1}
\int_{\Omega}
|\mathscr{L}_{f}(\exp(f_n-f)\varphi)-\mathscr{L}_{f}(\varphi)|\, d\nu_f
\\
&\leq
\lambda_{f}
\sup_{0<\|\varphi\|_{L^1}\leq 1}
\int_{\Omega}
|\varphi||(\exp(f_n-f)-1)|\, d\nu_f
\\
&\leq
\lambda_{f}
|\exp(\|f_n-f\|_{\infty})-1)|
\sup_{0<\|\varphi\|_{L^1}\leq 1}
\int_{\Omega}
|\varphi|\, d\nu_f,
\end{align*}
we can conclude that
$\|\mathscr{L}_{f_n}-\mathscr{L}_{f}\|_{L^1(\Omega,\mathscr{F},\nu_f)}\to 0$
as $n\to\infty$.
\end{proof}

\section{Existence of the Eigenfunctions}
\label{secao-existencia-auto-funcao}

We point out that for a given continuous potential $f$ there always exists eigenprobabilities $\nu_{f}$ whereas in some situations, there is no positive and continuous eigenfunction (see, for instance, \cite{MR3350377}).
We now will show the existence of
a non-trivial
eigenfunction of $\mathscr{L}_f$ in  $L^1(\Omega,\mathscr{F},\nu_{f})$ for potentials $f$ satisfying Bowen's condition. This partially extends a result of Walters (see \cite{MR1783787}) to our case with a possibly uncountable alphabet.

\medskip

In this section we consider sequences of Borel probability measures
$(\mu_{f_n})_{n\in\mathbb{N}}$ defined by
\begin{align}\label{def-mufn}
\mathscr{F}\ni E\mapsto
\mu_{f_n}(E)
\equiv
\int_{E}h_{f_n} d\nu_{f},
\end{align}

where $f_n\in C^{\gamma}(\Omega)$
satisfies $\|f_n-f\|_{\infty}\to 0$,
and $h_{f_n}$ is the unique eigenfunction of $\mathscr{L}_{f_n}$,
which is assumed to have $L^1(\Omega,\mathscr{F},\nu_f)$
norm one. Since $\Omega$ is compact we can also assume,
up to subsequence, that $\mu_{f_n}\rightharpoonup \mu\in \mathscr{P}(\Omega,\mathscr{F})$.

%

From the definition of $\mu_{f_n}$ we immediately have that
$\mu_{f_n}\ll \nu_f$.
Notice that, in such generality, it is \textbf{not} possible to
guarantee that $\mu\ll \nu_f$. When this is true the
Radon-Nikodym theorem ensures the existence of a non-negative
function $d\mu/d\nu_{f}\in L^1(\Omega,\mathscr{F},\nu_f)$
such that for all $E\in\mathscr{F}$ we have
\begin{align}\label{eq-radom-nikodym-mu-nuf}
\mu(E) = \int_{E} \frac{d\mu}{d\nu_f}\, d\nu_{f}.
\end{align}
In what follows we give sufficient conditions for this
Radon-Nikodym derivative to be an eigenfunction of $\mathscr{L}_f$.

\begin{theorem} Let $(f_n)$ be a sequence of Hölder continuous functions which converges uniformly to $f$,
and $\mu_{f_n}$ defined as  in \eqref{def-mufn}.
If $(h_{f_n})_{n\in\mathbb{N}}$ is a relatively compact subset of
$L^1(\Omega,\mathscr{F},\nu_f)$ then up to subsequence
$\mu_{f_n} \rightharpoonup \mu$, $\mu\ll \nu_{f}$ and
$\mathscr{L}_{f}(d\mu/d\nu_{f})=\lambda_f d\mu/d\nu_{f}$.
\end{theorem}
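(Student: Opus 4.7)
The plan is to exploit the hypothesis of relative compactness of $(h_{f_n})_{n\in\mathbb{N}}$ in $L^1(\Omega,\mathscr{F},\nu_f)$ to extract a subsequence that converges in the $L^1$-norm, and then identify its limit as the Radon-Nikodym derivative $d\mu/d\nu_f$. First I would pass to a subsequence (not relabeled) such that $h_{f_n}\to h$ in $L^1(\Omega,\mathscr{F},\nu_f)$. Since $\|h_{f_n}\|_{L^1(\nu_f)}=1$ and the norm is continuous, it follows that $\|h\|_{L^1(\nu_f)}=1$; in particular $h$ is a non-trivial nonnegative element of $L^1(\Omega,\mathscr{F},\nu_f)$, as each $h_{f_n}\geq 0$ $\nu_f$-a.s.

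Next I would verify that $\mu_{f_n}\rightharpoonup \mu$, where $\mu$ is the Borel probability measure defined by $\mu(E)=\int_E h\, d\nu_f$. For any $\varphi\in C(\Omega)$ we have the bound
\[
\left|\int_\Omega \varphi\, d\mu_{f_n}-\int_\Omega \varphi\, d\mu\right|
= \left|\int_\Omega \varphi (h_{f_n}-h)\, d\nu_f\right|
\leq \|\varphi\|_\infty\, \|h_{f_n}-h\|_{L^1(\nu_f)},
\]
which tends to zero. This simultaneously establishes the weak convergence and the absolute continuity $\mu\ll\nu_f$ with $d\mu/d\nu_f=h$ (by uniqueness of weak limits and the Riesz-Markov representation).

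It remains to show the eigenvalue equation $\mathscr{L}_f h=\lambda_f h$ in $L^1(\Omega,\mathscr{F},\nu_f)$. Starting from $\mathscr{L}_{f_n}(h_{f_n})=\lambda_{f_n} h_{f_n}$ (which holds pointwise and hence in $L^1(\nu_f)$), I would estimate
\[
\|\mathscr{L}_{f_n}(h_{f_n})-\mathscr{L}_f(h)\|_{L^1(\nu_f)}
\leq \|\mathscr{L}_{f_n}-\mathscr{L}_f\|_{\mathrm{op}}\|h_{f_n}\|_{L^1(\nu_f)}
+\|\mathscr{L}_f\|_{\mathrm{op}}\|h_{f_n}-h\|_{L^1(\nu_f)},
\]
which vanishes in the limit thanks to Proposition \ref{prop-convergencia-Lfn-Lf} and $L^1$-convergence of $h_{f_n}$. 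On the right-hand side,
\[
\|\lambda_{f_n}h_{f_n}-\lambda_f h\|_{L^1(\nu_f)}
\leq |\lambda_{f_n}-\lambda_f|\,\|h_{f_n}\|_{L^1(\nu_f)}+\lambda_f\,\|h_{f_n}-h\|_{L^1(\nu_f)}\to 0
\]
by Corollary \ref{cor-converg-lambdan}. Matching both limits gives $\mathscr{L}_f(d\mu/d\nu_f)=\lambda_f\, d\mu/d\nu_f$ in $L^1(\Omega,\mathscr{F},\nu_f)$.

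The main obstacle is conceptual rather than computational: it lies in ensuring that the $L^1$-limit $h$ is actually the density $d\mu/d\nu_f$ of the weak limit $\mu$, given that weak convergence of measures is a much weaker notion than $L^1$-convergence of densities. The relative compactness assumption is exactly what bridges this gap, and once the $L^1$-limit is identified with $d\mu/d\nu_f$, the passage to the limit in the eigenvalue equation proceeds by a routine application of the operator-norm convergence established in Proposition \ref{prop-convergencia-Lfn-Lf} together with the normalization $\|h_{f_n}\|_{L^1(\nu_f)}=1$ which keeps the quantities uniformly bounded.
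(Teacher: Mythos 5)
Your proposal is correct and follows essentially the same route as the paper: extract an $L^1(\nu_f)$-convergent subsequence, identify the limit $h$ with $d\mu/d\nu_f$ via the bound $|\int\varphi\,d\mu_{f_n}-\int\varphi h\,d\nu_f|\leq\|\varphi\|_\infty\|h_{f_n}-h\|_{L^1(\nu_f)}$, and pass to the limit in the eigenvalue equation using Proposition \ref{prop-convergencia-Lfn-Lf}, Corollary \ref{cor-converg-lambdan} and the normalization $\|h_{f_n}\|_{L^1(\nu_f)}=1$. The only difference is cosmetic: the paper bounds $\|\mathscr{L}_f(h)-\lambda_f h\|_{L^1(\nu_f)}$ directly by a triangle inequality, whereas you take limits on both sides of $\mathscr{L}_{f_n}(h_{f_n})=\lambda_{f_n}h_{f_n}$; the estimates involved are identical.
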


\begin{proof}
Without loss of generality we can assume that $h_{f_n}$ converges
to some non-negative function $h_f\in L^1(\Omega,\mathscr{F},\nu_f)$.
This convergence implies
\[
\left|
\int_{\Omega}\varphi h_{f_n}\, d\nu_{f}
-
\int_{\Omega}\varphi h_{f}\, d\nu_{f}
\right|
\to
0,
\quad \forall \varphi\in C(\Omega).
\]
Therefore $\mu_{f_n}\rightharpoonup \mu$ with $\mu\ll \nu_f$
and $d\mu/d\nu_{f}=h_f$ almost surely.

Let us show that this Radon-Nikodym derivative is a
non-negative eigenfunction for the Ruelle operator $\mathscr{L}_f$.
It follows from the triangular inequality  that
\begin{align*}
\|\mathscr{L}_{f}(h_f)-\lambda_fh_f \|_{L^1(\nu_f)}
\!\leq \!
\|\mathscr{L}_{f}(h_f)-\mathscr{L}_{f_n}(h_f) \|_{L^1(\nu_f)}
\!+\!
\|\mathscr{L}_{f_n}(h_f)-\lambda_fh_f \|_{L^1(\nu_f)}.
\end{align*}
Proposition \ref{prop-convergencia-Lfn-Lf} implies that
the first term tends to zero when $n$ tends to infinity, whereas
the second term can be estimated as follows.
\begin{align*}
\|\mathscr{L}_{f_n}(h_f)-\lambda_fh_f \|_{L^1(\nu_f)}
&\leq
\|\mathscr{L}_{f_n}(h_f-h_{f_n}+h_{f_n})-\lambda_fh_f \|_{L^1(\nu_f)}
\\
&\!\!\!\!\!\!
\leq
\|\mathscr{L}_{f_n}(h_f-h_{f_n})+ \lambda_{f_n}h_{f_n}-\lambda_fh_f \|_{L^1(\nu_f)}
\\
&\!\!\!\!\!\!\!\!\!\!
\leq
\|\mathscr{L}_{f_n}\|_{L^1(\nu)} \cdot
\|h_f-h_{f_n}\|_{L^1(\nu)}+
\|\lambda_{f_n}h_{f_n}-\lambda_fh_f \|_{L^1(\nu_f)}.
\end{align*}
Since $\sup_{n\in\mathbb{N}} \|\mathscr{L}_{f_n}\|_{L^1(\nu)}<+\infty$
and
$\|h_f-h_{f_n}\|_{L^1(\Omega,\mathscr{F},\nu_f)}\to 0$ as $n\to\infty$,
we have that the first term on the right hand side also goes to zero when $n$ tends to infinity.
The second term on the right hand side above is bounded by
\begin{align*}
\|\lambda_{f_n}h_{f_n}-\lambda_fh_f \|_{L^1(\nu_f)}
&\leq
\|\lambda_{f_n}h_{f_n}-\lambda_fh_{f_n} \|_{L^1(\nu_f)}
+
\|\lambda_{f}h_{f_n}-\lambda_fh_f \|_{L^1(\nu_f)}
\\
&=
|\lambda_{f_n}-\lambda_f|
+
|\lambda_{f}|\cdot \|h_{f_n}-h_f \|_{L^1(\nu_f)}.
\end{align*}
From Corollary \ref{cor-converg-lambdan}
and our assumption follows that the lhs above
can be made small if $n$ is big enough.
Piecing together all these estimates we
conclude that
$\|\mathscr{L}_{f}(h_f)-\lambda_fh_f \|_{L^1(\nu_f)}=0$
and therefore $\mathscr{L}_{f}(h_f)=\lambda_fh_f,\ \nu_f$\ a.s..
\end{proof}

\begin{theorem} Let $(f_n)$ be a sequence of Hölder continuous functions which converges uniformly to $f$,  $\mu_{f_n}$ as in \eqref{def-mufn} and suppose that
 that $\mu_{f_n} \rightharpoonup \mu$.
 If
$\mu\ll \nu_{f}$ and $h_{f_n}(x)\to d\mu/d\nu_{f}$ $\nu_{f}$-a.s.
then $\mathscr{L}_{f}(d\mu/d\nu_{f})=\lambda_f\, d\mu/d\nu_{f}$.
\end{theorem}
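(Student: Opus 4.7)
The plan is to reduce this theorem to the previous one (Theorem \ref{teo-der-RN-auto-funcao}) by upgrading the almost sure convergence of $(h_{f_n})_{n\in\mathbb{N}}$ to convergence in $L^1(\Omega,\mathscr{F},\nu_f)$, so that the sequence becomes relatively compact in $L^1$.

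First I would verify the mass normalization. By construction each $h_{f_n}$ is non-negative and $\int_\Omega h_{f_n}\, d\nu_f = 1$, so $\mu_{f_n}$ is a Borel probability measure. Testing the weak convergence $\mu_{f_n}\rightharpoonup\mu$ against the continuous function $\varphi\equiv 1$ gives $\mu(\Omega)=1$, hence
\[
\int_{\Omega}\frac{d\mu}{d\nu_{f}}\, d\nu_{f}= \mu(\Omega)=1=\int_{\Omega} h_{f_n}\, d\nu_f.
\]
Writing $h_f:=d\mu/d\nu_f\geq 0$, I now have two key facts: $h_{f_n}\to h_f$ $\nu_f$-a.s. by hypothesis, and $\|h_{f_n}\|_{L^1(\nu_f)}=\|h_f\|_{L^1(\nu_f)}$. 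By Scheff\'e's lemma this forces $\|h_{f_n}-h_f\|_{L^1(\nu_f)}\to 0$.

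Once this $L^1$ convergence is established, the sequence $(h_{f_n})_{n\in\mathbb{N}}$ is trivially relatively compact in $L^1(\Omega,\mathscr{F},\nu_f)$, and Theorem \ref{teo-der-RN-auto-funcao} applies directly, yielding $\mathscr{L}_{f}(d\mu/d\nu_f)=\lambda_f\, d\mu/d\nu_f$. Alternatively, one can repeat the triangle inequality estimate of that proof:
\[
\|\mathscr{L}_{f}(h_f)-\lambda_f h_f\|_{L^1(\nu_f)}
\leq
\|(\mathscr{L}_{f}-\mathscr{L}_{f_n})(h_f)\|_{L^1(\nu_f)}
+
\|\mathscr{L}_{f_n}(h_f-h_{f_n})\|_{L^1(\nu_f)}
+
\|\lambda_{f_n}h_{f_n}-\lambda_f h_f\|_{L^1(\nu_f)},
\]
where the first term vanishes by Proposition \ref{prop-convergencia-Lfn-Lf}, the second by the uniform bound $\|\mathscr{L}_{f_n}\|_{L^1(\nu_f)}=\lambda_{f_n}$ (bounded in $n$ by Corollary \ref{cor-converg-lambdan}) combined with the $L^1$ convergence of $h_{f_n}$ to $h_f$, and the third by Corollary \ref{cor-converg-lambdan} together with the same $L^1$ convergence.

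The only nontrivial step is the $L^1$ convergence of the densities, and the whole subtlety is whether the a.s. limit really has the same total mass as each $h_{f_n}$; this is precisely where the hypothesis $\mu\ll\nu_f$ becomes essential, since without it the weak limit $\mu$ could charge a $\nu_f$-null set, the a.s.\ limit of $h_{f_n}$ would underestimate $\mu(\Omega)$, Scheff\'e would fail, and the densities could concentrate into a singular part, breaking the eigenfunction identity.
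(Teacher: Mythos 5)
Your proposal is correct and follows essentially the same route as the paper: equate the $L^1(\nu_f)$ masses of $h_{f_n}$ and $d\mu/d\nu_f$, invoke Scheff\'e's lemma to upgrade the almost sure convergence to $L^1$ convergence, and then apply Theorem \ref{teo-der-RN-auto-funcao}. The only difference is that you spell out why $\mu(\Omega)=1$ and restate the triangle-inequality estimate, details the paper leaves implicit.
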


\begin{proof}
Notice that
\[
\int_{\Omega}|h_{f_n}|\, d\nu_f
=
1
=
\int_{\Omega}\left| \frac{d\mu}{d\nu_{f}}\right| \, d\nu_f
\quad
\text{and}
\ \ h_{f_n}(x)\to d\mu/d\nu_{f}\ \ \nu_{f}-\text{a.s.}.
\]
Scheff\'e's lemma implies that $h_{f_n}$ converges
to $d\mu/d\nu_{f}$ in the $L^1(\Omega,\mathscr{F},\nu_f)$ norm.
To finish the proof it is enough to apply the previous theorem.
\end{proof}

We now construct an eigenfunction for $\mathscr{L}_{f}$
without assuming convergence of $h_{f_n}$ neither in
$L^1(\Omega,\mathscr{F},\nu_f)$ nor almost surely.
We should remark that the next theorem applies even
when no convergent subsequence of $(h_{f_n})_{n\in\mathbb{N}}$
exists.

\begin{theorem}
Let $(h_{f_n})_{n\in\mathbb{N}}$ be a sequence of eigenfunctions
in the unit sphere of the Lebesgue space $L^1(\Omega,\mathscr{F},\nu_f)$,
where $(f_n)_{n\in\mathbb{N}}$ is a sequence of H\"older potentials converging uniformly to $f$.
If $\sup_{n\in\mathbb{N}}\|h_{f_n}\|_{\infty}<+\infty$,  then
$\limsup h_{f_n} \in L^1(\Omega,\mathscr{F},\nu_f)\setminus \{0\}$
and moreover $\mathscr{L}_{f}(\limsup h_{f_n}) = \lambda_f \limsup h_{f_n}$.
\end{theorem}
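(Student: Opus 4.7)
The plan is to set $h \equiv \limsup_{n\to\infty} h_{f_n}$ and verify, in order: (i) $h$ lies in $L^1(\nu_f)$ and is non-trivial; (ii) the pointwise inequality $\mathscr{L}_f h \geq \lambda_f h$; and (iii) equality $\nu_f$-a.s.\ via the duality relation.

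First, write $M \equiv \sup_n \|h_{f_n}\|_{\infty} < \infty$. Since each $h_{f_n}$ is strictly positive by Theorem \ref{teo-RPF-holder}, the function $h$ is measurable and satisfies $0 \leq h \leq M$, hence $h \in L^{\infty}(\nu_f) \subset L^1(\nu_f)$. To see $h \neq 0$, apply Fatou's lemma to the non-negative sequence $M - h_{f_n}$:
\[
\int_\Omega (M - h)\, d\nu_f
\leq
\liminf_{n\to\infty}\int_\Omega (M - h_{f_n})\, d\nu_f
=
M - 1,
\]
since $\|h_{f_n}\|_{L^1(\nu_f)} = 1$. Thus $\int h\, d\nu_f \geq 1$.

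Next, the key step is to establish $\lambda_f h(x) \leq \mathscr{L}_f(h)(x)$ for every $x \in \Omega$. Since $\|e^{f_n} - e^f\|_\infty \to 0$ and $\|h_{f_n}\|_\infty \leq M$, the explicit integral form of the Ruelle operator gives the uniform estimate
\[
\sup_{x\in\Omega}\bigl|\mathscr{L}_f(h_{f_n})(x) - \mathscr{L}_{f_n}(h_{f_n})(x)\bigr|
\leq
M\,\|e^f - e^{f_n}\|_\infty \longrightarrow 0.
\]
Combined with $\mathscr{L}_{f_n}(h_{f_n}) = \lambda_{f_n} h_{f_n}$ and $\lambda_{f_n} \to \lambda_f$ (Corollary \ref{cor-converg-lambdan}), this yields
\[
\limsup_{n\to\infty}\mathscr{L}_f(h_{f_n})(x)
=
\limsup_{n\to\infty}\lambda_{f_n} h_{f_n}(x)
=
\lambda_f\, h(x).
\]
On the other hand, since $0 \leq e^{f(ax)} h_{f_n}(ax) \leq e^{\|f\|_\infty} M$ is uniformly dominated by an integrable function on $(M, p)$, the reverse Fatou lemma applied pointwise in $x$ to the integrand of $\mathscr{L}_f(h_{f_n})(x) = \int_M e^{f(ax)} h_{f_n}(ax)\, dp(a)$ gives
\[
\limsup_{n\to\infty}\mathscr{L}_f(h_{f_n})(x)
\leq
\int_M e^{f(ax)} \limsup_{n\to\infty} h_{f_n}(ax)\, dp(a)
=
\mathscr{L}_f(h)(x).
\]
Combining the two displays gives $\lambda_f h \leq \mathscr{L}_f h$ pointwise.

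Finally, integrating the inequality $\mathscr{L}_f h - \lambda_f h \geq 0$ against $\nu_f$ and invoking the extended duality relation (valid for all $h \in L^1(\nu_f)$ by the result in Section \ref{secao-ruelle-L1}) yields
\[
\int_\Omega \bigl(\mathscr{L}_f h - \lambda_f h\bigr)\, d\nu_f
=
\lambda_f\!\int_\Omega h\, d\nu_f - \lambda_f\!\int_\Omega h\, d\nu_f
=
0.
\]
A non-negative function with zero integral vanishes $\nu_f$-a.s., so $\mathscr{L}_f(h) = \lambda_f h$ $\nu_f$-a.s., completing the proof. The main obstacle is the pointwise bound $\mathscr{L}_f h \geq \lambda_f h$: no almost-sure convergence of the $h_{f_n}$ is assumed, so one cannot simply pass to limits inside $\mathscr{L}_f$; the trick is to use reverse Fatou against the \textit{a priori} measure $p$ (where the uniform $L^\infty$ bound furnishes the required domination) together with uniform control of the operator perturbation $\mathscr{L}_f - \mathscr{L}_{f_n}$.
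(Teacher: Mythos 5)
Your proof is correct and follows essentially the same route as the paper: the reverse Fatou lemma against the \textit{a priori} measure yields the supersolution inequality $\mathscr{L}_f h \geq \lambda_f h$ for $h=\limsup_n h_{f_n}$, and integrating against $\nu_f$ (equivalently, using $\|\mathscr{L}_f\|_{L^1(\nu_f)}=\lambda_f$) upgrades it to an a.e.\ equality. The one place you genuinely diverge is the non-triviality of $h$: the paper first shows that the measures $h_{f_n}\nu_f$ converge weakly to some $\mu \ll \nu_f$ (via the Portmanteau theorem and outer regularity of $\nu_f$) and only then applies reverse Fatou to get $\int_\Omega h\, d\nu_f \geq 1$, whereas your application of Fatou to the non-negative sequence $M-h_{f_n}$ reaches the same bound directly from $\|h_{f_n}\|_{L^1(\nu_f)}=1$, with no weak-convergence machinery; this is a genuine simplification. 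Your explicit uniform estimate $\|\mathscr{L}_f(h_{f_n})-\mathscr{L}_{f_n}(h_{f_n})\|_\infty \leq M\|e^f-e^{f_n}\|_\infty$ is also a cleaner justification of the interchange that the paper performs by inserting $e^{f(ax)}=\lim_n e^{f_n(ax)}$ inside the limsup. One minor gloss shared by both arguments: when invoking the duality $\int_\Omega \mathscr{L}_f h\, d\nu_f=\lambda_f\int_\Omega h\, d\nu_f$, one implicitly identifies the pointwise integral formula for $\mathscr{L}_f h$ (with $h$ bounded Borel) with the $L^1(\nu_f)$-extension constructed in Section \ref{secao-ruelle-L1}; this requires a routine approximation or monotone class argument that neither proof spells out.
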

\begin{proof}
Since we are assuming that
$\sup_{n\in\mathbb{N}}\|h_{f_n}\|_{\infty}<+\infty$, it follows that
 $\limsup h_n\in L^1(\Omega,\mathscr{F},\nu_f)$.
For any fixed $x\in \Omega$ follows from this uniform
bound that the mapping
\[
M\ni a\mapsto \limsup_{n\to\infty} h_{f_n}(ax)
\]
is uniformly bounded and therefore integrable with
respect to the a-priori measure $\nu$
so we can apply the limsup version of the Fatou's lemma
to get the  inequality
\begin{align*}
\mathscr{L}_f(\limsup_{n\to\infty} h_{f_n})
&=
\int_{M} \exp(f(ax))\limsup_{n\to\infty} h_{f_n}(ax)\, dp(a)
\\
&=
\int_{M} \lim_{n\to\infty}\exp(f_n(ax))\limsup_{n\to\infty}
h_{f_n}(ax)\, dp(a)
\\
&=
\int_{M} \limsup_{n\to\infty}(\exp(f_n(ax)) h_{f_n}(ax))\, dp(a)
\\
&\geq
\limsup_{n\to\infty}
\int_{M} \exp(f_n(ax)) h_{f_n}(ax)\, dp(a)
\\
&=
\limsup_{n\to\infty}
\lambda_{f_n} h_{f_n}
\\
&=
\lambda_{f}
\limsup_{n\to\infty} h_{f_n}.
\end{align*}
These inequalities implies that $\limsup h_{f_n}$ is a super
solution to the eigenvalue problem. On the other hand,
we have proved that  $\|\mathscr{L}_f\|_{L^1(\nu_f)}=\lambda_f$.
This fact together with the previous inequality implies that
\[
\mathscr{L}_f(\limsup_{n\to\infty} h_{f_n})
=
\lambda_{f}
\limsup_{n\to\infty} h_{f_n}
\]
$\nu_f$-almost surely. It remains to prove that $\limsup h_{f_n}$ is non trivial.
Since we have $\sup_{n\in\mathbb{N}}\|h_{f_n}\|_{\infty}<+\infty$
follows that $\mu_{f_n} \rightharpoonup \mu \ll \nu_f$.
Indeed, for any open set $A\subset \Omega$,
weak convergence and the Portmanteau Theorem imply
that
\[
\mu(A)\leq \liminf_{n\to\infty} \int_{\Omega} 1_A h_{f_n}\, d\nu_{f}.
\]
Since $\nu_{f}$ is outer regular we have for any
$B\in\mathscr{F}$ that
$\nu_{f}(B)=\inf \{\nu_{f}(A): A\supset B, A\ \text{open} \}$.
From the previous inequality and uniform limitation of $h_{f_n}$
we get for any $B\subset A$ ($A$ open set) that
$\mu(B)\leq \mu(A)\leq \sup_{n\in\mathbb{N}}\|h_{f_n}\|_{\infty}\ \nu_{f}(A)$.
Taking the infimum over $A\supset B$, $A$ open, we have
$\mu(B) \leq \sup_{n\in\mathbb{N}}\|h_{f_n}\|_{\infty}\ \nu_{f}(B)$
and thus $\mu\ll \nu_f$.
By applying again the limit sup version of the Fatou Lemma we get that
\[
1
=
\int_{\Omega} \frac{d\mu}{d\nu_{f}}\, d\nu_f
=
\lim_{n\to\infty} \int_{\Omega} h_{f_n}\, d\nu_f
=
\limsup_{n\to\infty} \int_{\Omega} h_{f_n}\, d\nu_f
\leq
\int_{\Omega} \limsup_{n\to\infty} h_{f_n}\, d\nu_f,
\]
where the second equality comes from the definition of
the weak convergence.
\end{proof}
\medskip
We point out that the conditions $\sup_{n\in\mathbb{N}}\|h_{f_n}\|_{\infty}<+\infty$ and $(f_n)$ Hölder, $f_n \to f$ uniformly is not
satisfied for Hofbauer potentials.

All the previous theorems of this section
required informations about the eigenfunction.
Now we present an existence result that one can
check by using only the potential (via $\mathscr{L}^n_{f}$) and some estimates
on the maximal eigenvalue. We remark that a similar approach can be found in \cite{Denker-Kifer-Stadlbauer-2008} in the setting of random shift spaces with countable alphabets. In there, it was shown that the limes inferior defines a random eigenfunction.

\begin{theorem}\label{teo-eigenfunction-limsup}
Let $f$ be a continuous potential and $\lambda_f$ the eigenvalue of
$\mathscr{L}^*_f$ provided by Proposition \ref{prop-limite-pressao}.
If
\[
\sup_{n\in\mathbb{N}}
\left\| \mathscr{L}^n_{f}(1)/\lambda^n_{f}\right\|_{\infty}
<
+\infty,
\]
then,
$
\limsup_{n\to\infty}\mathscr{L}^n_{f}(1)/\lambda^n_{f}
$
is a non-trivial
eigenfunction of $\mathscr{L}_f$ in  $L^1(\Omega,\mathscr{F},\nu_{f})$ associated to $\lambda_f$.
\end{theorem}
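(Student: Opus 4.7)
The plan is to adapt the strategy of the previous \emph{limsup} theorem to the sequence $u_n := \mathscr{L}^n_f(1)/\lambda^n_f$, exploiting the elementary identity
\[
\mathscr{L}_f(u_n) = \frac{\mathscr{L}^{n+1}_f(1)}{\lambda^n_f} = \lambda_f\, u_{n+1},
\]
which says that the $u_n$ form an ``almost'' eigen-sequence. Set $h := \limsup_{n\to\infty} u_n$. The standing hypothesis $C := \sup_n \|u_n\|_\infty < \infty$ gives $0\leq h \leq C$, so $h$ is bounded and in particular belongs to $L^1(\Omega,\mathscr{F},\nu_f)$.

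First I would establish the pointwise super-eigenfunction inequality $\mathscr{L}_f(h) \geq \lambda_f h$. For any fixed $x\in\Omega$, the integrand $a \mapsto e^{f(ax)}u_n(ax)$ is uniformly dominated by $e^{\|f\|_\infty}C$, so the reverse Fatou lemma applied to the \emph{a priori} measure $p$ gives
\begin{align*}
\mathscr{L}_f(h)(x)
&= \int_M e^{f(ax)}\,\limsup_{n\to\infty} u_n(ax)\, dp(a) \\
&\geq \limsup_{n\to\infty}\int_M e^{f(ax)}\,u_n(ax)\, dp(a)
= \limsup_{n\to\infty}\lambda_f\, u_{n+1}(x) = \lambda_f\, h(x).
\end{align*}
Next I would upgrade this inequality to an $\nu_f$-almost sure equality using the $L^1$-extension of the duality from Section~\ref{secao-ruelle-L1}: since $\int \mathscr{L}_f(h)\, d\nu_f = \lambda_f \int h\, d\nu_f$ for every $h\in L^1(\nu_f)$, the non-negative function $\mathscr{L}_f(h) - \lambda_f h$ has zero $\nu_f$-integral and therefore vanishes $\nu_f$-almost surely.

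For non-triviality, iterating $\mathscr{L}^*_f \nu_f = \lambda_f \nu_f$ yields $\int_\Omega u_n\, d\nu_f = 1$ for every $n$. A second application of the reverse Fatou lemma, now in the probability space $(\Omega,\mathscr{F},\nu_f)$ and again justified by the uniform bound $u_n \leq C$, then gives
\[
\int_{\Omega} h\, d\nu_f \;\geq\; \limsup_{n\to\infty}\int_{\Omega} u_n\, d\nu_f \;=\; 1,
\]
so $h$ is not the zero element of $L^1(\nu_f)$. I do not anticipate a significant obstacle here: the only delicate point is aligning the pointwise super-eigenfunction inequality with the $L^1$-duality to force equality, and both uses of the reverse Fatou lemma are legitimate precisely because of the assumed uniform sup-norm bound on $(u_n)$, which is tailor-made for this argument.
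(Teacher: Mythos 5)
Your proposal is correct and follows essentially the same route as the paper: reverse Fatou to get the super-solution inequality $\mathscr{L}_f(h)\geq\lambda_f h$, the $L^1(\nu_f)$ duality to force the non-negative difference to vanish $\nu_f$-a.s., and a second Fatou argument for non-triviality. Your non-triviality step is in fact a slight streamlining — you apply reverse Fatou directly to the identities $\int_\Omega u_n\,d\nu_f=1$, whereas the paper takes a detour through weak cluster points of the measures $E\mapsto\int_E u_n\,d\nu_f$ — but the underlying idea is identical.
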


\begin{proof}
The key idea is to prove that
$
\limsup_{n\to\infty}\mathscr{L}^n_{f}(1)/\lambda^n_{f}
$
is a super solution for the eigenvalue problem, since
it belongs to $L^1(\Omega,\mathscr{F},\nu_{f})$ it
has to be a sub solution and then it is in fact a solution.
Its non-triviality is based on the arguments given
in the previous proof and the weak convergence of
suitable sequence of probability measures.

The super solution part of the argument is again based on
the reverse Fatou Lemma as follows
\begin{align*}
\mathscr{L}_f(\limsup_{n\to\infty}
\mathscr{L}^n_{f}(1)/\lambda^n_{f}
)
&=
\int_{M} \exp(f(ax))\limsup_{n\to\infty}
\mathscr{L}^n_{f}(1)(ax)/\lambda^n_{f} \, dp(a)
\\
&\geq
\limsup_{n\to\infty}
\int_{M} \exp(f(ax))
\mathscr{L}^n_{f}(1)(ax)/\lambda^n_{f} \, dp(a)
\\
&=
\limsup_{n\to\infty}
\lambda_{f}\ \mathscr{L}^{n+1}_{f}(1)(x)/\lambda^{n+1}_{f}
\\
&=
\lambda_{f}
\limsup_{n\to\infty} \mathscr{L}^{n}_{f}(1)(x)/\lambda^{n}_{f}.
\end{align*}
The next step is to prove the non-triviality of this limsup.
From the definition of $\nu_f$ we can say that the following
sequence of probability measures is contained in $\mathscr{P}(\Omega,\mathscr{F})$:
\[
\mathscr{F}\ni E \mapsto \int_{E} \frac{\mathscr{L}^{n}_{f}(1)}{\lambda^n_{f}}
\, d\nu_{f}.
\]
Similarly, from the previous theorem we can ensure that all its cluster points
in the weak topology are absolutely continuous with respect to $\nu_f$.
Up to subsequence, we can get from another application of the Fatou
Lemma that
\[
1
=
\int_{\Omega} \frac{d\mu}{d\nu_{f}}\, d\nu_f
=
\lim_{n\to\infty} \int_{\Omega}
\frac{\mathscr{L}^{n}_{f}(1)}{\lambda^n_{f}}\, d\nu_f
\leq
\int_{\Omega} \limsup_{n\to\infty}
\frac{\mathscr{L}^{n}_{f}(1)}{\lambda^n_{f}}\, d\nu_f.
\]

\end{proof}

\begin{corollary}\label{cor-autofun-Bowen}
Let $f$ be a potential satisfying Bowen's condition and let $D$ as in \eqref{Bowen-condition}.
Then,
\[
h_{f}\equiv \limsup_{n\to\infty}\mathscr{L}^n_{f}(1)/\lambda^n_{f}
\]
is a non trivial $L^1(\Omega,\mathscr{F},\nu_{f})$
eigenfunction of $\mathscr{L}_f$ associated to $\lambda_f$
and $e^{-D}\leq h_{f}\leq e^{D}$.
\end{corollary}

\begin{proof}
Since we are assuming that the potential $f$ satisfies Bowen's condition
and $D$ is given by \eqref{Bowen-condition} it follows from \eqref{eq-des-Lnfxy} that uniformly in $n\in\mathbb{N}$ and $z,y\in\Omega$ we have
$
e^{-D}\mathscr{L}_{f}^{n}(1)(\sigma^{n}(z))
\leq
\mathscr{L}_{f}^{n}(1)(\sigma^{n}(y))
\leq
e^{D}\mathscr{L}_{f}^{n}(1)(\sigma^{n}(z)).
$
Replacing in this inequality $z$ by $a_1\ldots a_nz$ and similarly $y$ by $a_1\ldots a_ny$,
where $(a_1,\ldots, a_n)\in M^n$ is fixed, we obtain the following estimate
which holds for all $n\geq 1$ and $y,z\in \Omega$
\[
e^{-D}\mathscr{L}_{f}^{n}(1)(z)
\leq
\mathscr{L}_{f}^{n}(1)(y)
\leq
e^{D}\mathscr{L}_{f}^{n}(1)(z).
\]
By integrating the above inequality in $z$, with respect to
the eigenmeasure, we get
\[
e^{-D} \leq \frac{\mathscr{L}_{f}^{n}(1)(y)}{\lambda_{f}^n}\leq e^{D}.
\]
The conclusions then follow from the last inequality and Theorem \ref{teo-eigenfunction-limsup}.
\end{proof}

\begin{remark}
It is not possible to conclude from the above argument whether $h_f$ is
a continuous function. Similarly to the case of finite alphabet considered
in \cite{MR1783787} the best information we have so far about its regularity
is that this eigenfunction is at least
$L^{\infty}(\Omega,\mathscr{F},\nu_{f})$. In the context of Markov maps, this result
for example in 
\cite{ADU}. However, the continuity of this
eigenfunction as far as we know remains open, even in the finite alphabet setting.
\end{remark}

\section{Applications}
\label{secao-aplicacoes}

\subsubsection*{Weak Convergence of Eigenprobabilities}

In this section we
consider a continuous potential $f:\Omega\to\mathbb{R}$  or
an element of $C^{\gamma}(\Omega)$ for some $0\leq \gamma<1$.
We would like to get results for continuous potentials
via limits of H\"older potentials.

We choose a point in the state space $M$ and
for simplicity call it $0$. We denote by
$(f_n)_{n\in\mathbb{N}}\subset C^{\gamma}(\Omega)$
the sequence given by
$f_n(x)=f(x_1,\ldots,x_n,0,0,\ldots)$. Keeping the notation
of the previous sections, the eigenprobabilities
of $\mathscr{L}_{f_n}$ and $\mathscr{L}_{f}$ are also denoted
by $\nu_{f_n}$ and $\nu_f$, respectively.
Notice that $\|f-f_n\|_{\infty}\to 0$, when
$n\to\infty$ and, moreover, if $f$ is H\"older then
this convergence is exponentially fast.
We denote by $\mathcal{L}(C(\Omega))$
the space of all bounded operators from $C(\Omega)$ to
itself and for each $T\in \mathcal{L}(C(\Omega)) $
we use the notation $\|T\|_{C(\Omega)}$
for its operator norm.
The next lemma is inspired by  Proposition \ref{prop-convergencia-Lfn-Lf}.

\begin{lemma}
The sequence $(\mathscr{L}_{f_n})_{n\in\mathbb{N}}$
converges in the operator norm to the Ruelle operator $\mathscr{L}_{f}$, i.e.,
$\|\mathscr{L}_{f_n}-\mathscr{L}_{f}\|_{C(\Omega)}\to 0$, when $n\to\infty$.
\end{lemma}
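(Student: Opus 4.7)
The plan is to reduce the operator-norm bound to a uniform bound on $|e^{f_n}-e^f|$ and then invoke uniform continuity of $f$. Given any $\varphi\in C(\Omega)$ with $\|\varphi\|_\infty\leq 1$, I would estimate pointwise
\[
|\mathscr{L}_{f_n}(\varphi)(x)-\mathscr{L}_{f}(\varphi)(x)|
\leq
\int_{M} |e^{f_n(ax)}-e^{f(ax)}|\,|\varphi(ax)|\,dp(a)
\leq
\|e^{f_n}-e^{f}\|_{\infty},
\]
uniformly in $x$, so that $\|\mathscr{L}_{f_n}-\mathscr{L}_{f}\|_{C(\Omega)}\leq \|e^{f_n}-e^{f}\|_{\infty}$.

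The remaining step is to show $\|e^{f_n}-e^f\|_\infty\to 0$. Using the elementary inequality $|e^a-e^b|\leq e^{\max\{|a|,|b|\}}|a-b|$, one has
\[
\|e^{f_n}-e^{f}\|_{\infty}\leq e^{\|f\|_{\infty}}\cdot \|f_n-f\|_{\infty},
\]
(noting $\|f_n\|_\infty\leq \|f\|_\infty$ by construction). So everything reduces to proving $\|f_n-f\|_\infty\to 0$, which is exactly the content of the proposition shown earlier in Section \ref{secao-quasilocalidade}: since $\Omega$ is compact, $f$ is uniformly continuous, and the distance $d_\Omega(x,(x_1,\ldots,x_n,0,0,\ldots))$ is bounded above by $\mathrm{diam}(\Omega)\,2^{-n}$, which is independent of $x$ and tends to zero.

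Piecing these three estimates together gives
\[
\|\mathscr{L}_{f_n}-\mathscr{L}_{f}\|_{C(\Omega)}
\leq
e^{\|f\|_{\infty}}\,\|f_n-f\|_{\infty}
\xrightarrow{\ n\to\infty\ } 0,
\]
which is the claim. There is no real obstacle here; the only subtlety is making sure one works with the operator norm (uniform over $\|\varphi\|_\infty\leq 1$) rather than a pointwise bound, but this drops out immediately from the pointwise estimate above, which is already uniform in both $x$ and $\varphi$. The argument is essentially the $C(\Omega)$-analogue of Proposition \ref{prop-convergencia-Lfn-Lf}, only simpler because one avoids the extension to $L^1(\nu_f)$.
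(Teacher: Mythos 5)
Your proof is correct and follows essentially the same route as the paper's: a direct pointwise estimate of the kernel difference, uniform in $x$ and in $\|\varphi\|_\infty\leq 1$, reducing everything to $\|f_n-f\|_\infty\to 0$. The only cosmetic difference is that the paper factors the difference as $\mathscr{L}_f\bigl((e^{f_n-f}-1)\varphi\bigr)$ and bounds it by $\|\mathscr{L}_f\|_{C(\Omega)}\,\|e^{f_n-f}-1\|_\infty$, whereas you bound $|e^{f_n}-e^{f}|$ directly via the mean value inequality; the two bounds are equivalent up to constants.
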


\begin{proof}
For all $n\in\mathbb{N}$ we have
\begin{align*}
\|\mathscr{L}_{f_n}-\mathscr{L}_{f}\|_{C(\Omega)}
&=
\sup_{0<\|\varphi\|_{\infty}\leq 1}\
\sup_{x\in\Omega}
|\mathscr{L}_{f_n}(\varphi)(x)-\mathscr{L}_{f}(\varphi)(x)|
\\
&\leq
\sup_{0<\|\varphi\|_{\infty}\leq 1}\
\sup_{x\in\Omega}
|\mathscr{L}_{f}(\exp(f_n-f)\varphi)(x)-\mathscr{L}_{f}(\varphi)(x)|
\\
&\leq
\|\mathscr{L}_{f}\|_{C(\Omega)}
\sup_{0<\|\varphi\|_{\infty}\leq 1}\
\|\varphi\|_{\infty}
\|(\exp(f_n-f)-1)\|_{\infty}
\\
&\leq
\|\mathscr{L}_{f}\|_{C(\Omega)}
\|(\exp(f_n-f)-1)\|_{\infty}.
\end{align*}
\end{proof}

\begin{proposition}
Any cluster point, in the weak topology,
of the sequence $(\nu_{f_n})_{n\in\mathbb{N}}$ belongs to the
set $ \mathscr{G}^{*}(f)$.
\end{proposition}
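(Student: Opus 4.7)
The plan is to show that along any weakly convergent subsequence $\nu_{f_{n_k}} \rightharpoonup \nu$, the limit $\nu$ satisfies the eigenmeasure identity $\int_\Omega \mathscr{L}_f\varphi\, d\nu = \lambda_f \int_\Omega \varphi\, d\nu$ for every $\varphi \in C(\Omega)$. Since $\mathscr{L}^*_{f_n}\nu_{f_n} = \lambda_{f_n}\nu_{f_n}$, the duality relation \eqref{eq-dualidade} gives, for every $\varphi \in C(\Omega)$ and every $n \in \mathbb{N}$,
\[
\int_\Omega \mathscr{L}_{f_n}\varphi\, d\nu_{f_n}
\;=\;
\lambda_{f_n} \int_\Omega \varphi\, d\nu_{f_n}.
\]
It therefore suffices to pass to the limit in both sides along the chosen subsequence.

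For the right-hand side, the convergence $\lambda_{f_n} \to \lambda_f$ is granted by Corollary \ref{cor-converg-lambdan}, while $\int_\Omega \varphi\, d\nu_{f_n} \to \int_\Omega \varphi\, d\nu$ follows from weak convergence since $\varphi \in C(\Omega)$. Hence the right-hand side converges to $\lambda_f \int_\Omega \varphi\, d\nu$.

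For the left-hand side, I would split the difference using the triangle inequality:
\[
\left|\int_\Omega \mathscr{L}_{f_n}\varphi\, d\nu_{f_n} - \int_\Omega \mathscr{L}_{f}\varphi\, d\nu\right|
\leq
\|\mathscr{L}_{f_n}\varphi - \mathscr{L}_f\varphi\|_\infty
\,+\,
\left|\int_\Omega \mathscr{L}_f\varphi\, d\nu_{f_n} - \int_\Omega \mathscr{L}_f\varphi\, d\nu\right|.
\]
The first summand is controlled by the uniform operator-norm convergence established in the preceding lemma, since $\|\mathscr{L}_{f_n}\varphi - \mathscr{L}_f\varphi\|_\infty \leq \|\mathscr{L}_{f_n} - \mathscr{L}_f\|_{C(\Omega)}\,\|\varphi\|_\infty \to 0$. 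The second summand goes to zero by weak convergence, because $\mathscr{L}_f\varphi \in C(\Omega)$. Combining both limits yields $\int_\Omega \mathscr{L}_f\varphi\, d\nu = \lambda_f\int_\Omega\varphi\, d\nu$ for all $\varphi \in C(\Omega)$, so $\mathscr{L}_f^*\nu = \lambda_f \nu$, i.e., $\nu \in \mathscr{G}^{*}(f)$.

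There is no serious obstacle here; the only thing worth emphasizing is that one must use the \emph{uniform} operator-norm convergence $\|\mathscr{L}_{f_n} - \mathscr{L}_f\|_{C(\Omega)} \to 0$ rather than mere pointwise convergence, because the test measures $\nu_{f_n}$ also vary with $n$ and weak convergence by itself does not allow one to integrate a moving sequence of continuous functions against a moving sequence of measures.
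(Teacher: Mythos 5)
Your proof is correct and follows essentially the same route as the paper's: both pass to the limit in the duality identity $\int_\Omega \mathscr{L}_{f_n}\varphi\, d\nu_{f_n} = \lambda_{f_n}\int_\Omega \varphi\, d\nu_{f_n}$ along the weakly convergent subsequence, using the operator-norm convergence $\|\mathscr{L}_{f_n}-\mathscr{L}_f\|_{C(\Omega)}\to 0$ from the preceding lemma together with Corollary \ref{cor-converg-lambdan}; the paper phrases the uniform control as an $\varepsilon$-sandwich where you use a triangle inequality, but the ingredients and logic are identical. Your closing remark about why uniform (rather than pointwise) convergence is needed is exactly the right point of emphasis.
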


\begin{proof}
By the previous lemma for any given $\varepsilon>0$
there is $n_0\in\mathbb{N}$ such that
if $n\geq n_0$ we have for all $\varphi\in C(\Omega)$ and
for all $x\in\Omega$ that
$
\mathscr{L}_{f_n}(\varphi)(x)-\varepsilon
<
\mathscr{L}_{f}(\varphi)(x)
<
\mathscr{L}_{f_n}(\varphi)(x)+\varepsilon.
$
From the duality relation of the Ruelle operator and
the weak convergence and Corollary \ref{cor-converg-lambdan}
we have that
\begin{align*}
\int_{\Omega} \varphi\, d(\mathscr{L}^*_{f}\nu)
=
\int_{\Omega} \mathscr{L}_{f}(\varphi)\, d\nu
&=
\lim_{n\to\infty}
\int_{\Omega} \mathscr{L}_{f}(\varphi)\, d\nu_{f_n}
\\
&<
\lim_{n\to\infty}
\int_{\Omega} \mathscr{L}_{f_n}(\varphi)\, d\nu_{f_n}+\varepsilon
\\
&=
\lim_{n\to\infty}
\int_{\Omega} \varphi\, d(\mathscr{L}_{f_n}\nu_{f_n})+\varepsilon
\\
&=
\lim_{n\to\infty}
\lambda_{f_n}
\int_{\Omega} \varphi\, d\nu_{f_n}+\varepsilon
\\
&=
\lambda_{f}
\int_{\Omega} \varphi\, d\nu+\varepsilon.
\end{align*}
We obtain analogous lower bound, with $-\varepsilon$
instead. Since
$\varepsilon>0$ is arbitrary it follows for any
$\varphi\in C(\Omega)$ that
\[
\int_{\Omega} \varphi\, d(\mathscr{L}^*_{f}\nu)
=
\lambda_{f}
\int_{\Omega} \varphi\, d\nu
\]
and therefore $\mathscr{L}^*_{f}\nu=\lambda_{f}\nu$.
\end{proof}

\begin{remark}
The above proposition for $f\in C^{\gamma}(\Omega)$
says that up to subsequence $\nu_{f_n}\rightharpoonup \nu_f$,
which is the unique eigenprobability of $\mathscr{L}^*_{f}$.
Therefore the eigenprobability $\nu_{f}$ inherits all
the properties of the sequence $\nu_{f_n}$ that are
preserved by weak limits.
\end{remark}

\subsubsection*{Constructive Approach for Equilibrium States for General Continuous Potentials}

Before proceed we should mention that Sarig in \cite{Sarig2015}
has also presented a construction of equilibrium measures for topologically
mixing topological Markov shifts (TMS).

\begin{lemma}\label{lema-construcao-muA}
For each $n\in\mathbb{N}$, let $f_n$ be the potential defined above and
$h_{f_n}$ the main eigenfunction of $\mathscr{L}_{f_n}$
associated to $\lambda_{f_n}$,
normalized so that $\|h_{f_n}\|_{L^1(\nu_{f_n})}=1$, where $\nu_{f_n}$
is the unique eigenprobability of $\mathscr{L}_{f_n}^{*}$.
Then there exist a $\sigma$-invariant Borel probability measure $\mu_{f}$
such that, up to taking subsequences,
\[
\lim_{n\to\infty} \int_{\Omega} \varphi h_{f_n}\,  d\nu_{f_{n}}
=
\int_{\Omega} \varphi\, d\mu_{f},
\quad \forall \varphi\in C(\Omega)
\]
\end{lemma}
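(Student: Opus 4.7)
The plan is to define, for each $n\in\mathbb{N}$, a Borel probability measure $\mu_n\in\mathscr{P}(\Omega,\mathscr{F})$ by $d\mu_n = h_{f_n}\,d\nu_{f_n}$, to verify that each $\mu_n$ is shift-invariant, and then to extract a weakly convergent subsequence whose limit is the desired $\mu_f$. Existence of such a subsequence is essentially for free: because $\Omega=M^{\mathbb{N}}$ is a compact metric space, $C(\Omega)$ is separable, so $\mathscr{P}(\Omega,\mathscr{F})$ is compact and metrizable in the weak topology. Each $\mu_n$ is indeed a probability measure because $h_{f_n}\geq 0$ by Theorem \ref{teo-RPF-holder} and $\|h_{f_n}\|_{L^1(\nu_{f_n})}=1$ by assumption.

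The central algebraic observation is the commutation identity $\mathscr{L}_{f_n}\bigl((\varphi\circ\sigma)\psi\bigr) = \varphi\cdot\mathscr{L}_{f_n}(\psi)$ for $\varphi,\psi\in C(\Omega)$, which follows immediately from the definition \eqref{def-operador-de-ruelle} since $\sigma(ax)=x$. Specializing to $\psi=h_{f_n}$ and using the eigenfunction relation $\mathscr{L}_{f_n}(h_{f_n})=\lambda_{f_n}h_{f_n}$, we obtain $\mathscr{L}_{f_n}\bigl((\varphi\circ\sigma)h_{f_n}\bigr)=\lambda_{f_n}\,\varphi\,h_{f_n}$. Combining this with the eigenmeasure identity $\mathscr{L}^*_{f_n}\nu_{f_n}=\lambda_{f_n}\nu_{f_n}$ via the duality \eqref{eq-dualidade} gives, for every $\varphi\in C(\Omega)$,
\[
\int_\Omega (\varphi\circ\sigma)\,d\mu_n
= \int_\Omega (\varphi\circ\sigma)h_{f_n}\,d\nu_{f_n}
= \frac{1}{\lambda_{f_n}}\int_\Omega \lambda_{f_n}\varphi\,h_{f_n}\,d\nu_{f_n}
= \int_\Omega \varphi\,d\mu_n,
\]
so that $\mu_n\in\mathscr{P}_\sigma(\Omega,\mathscr{F})$.

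Passing to a weakly convergent subsequence $\mu_{n_k}\rightharpoonup \mu_f$, the defining equality $\int\varphi\,d\mu_{n_k}=\int\varphi\,h_{f_{n_k}}\,d\nu_{f_{n_k}}$ yields the announced convergence $\int\varphi\,h_n\,d\nu_n\to\int\varphi\,d\mu_f$ for all $\varphi\in C(\Omega)$. Shift-invariance of the limit follows because $\varphi\circ\sigma\in C(\Omega)$ whenever $\varphi\in C(\Omega)$ (by continuity of $\sigma$), and thus the functional $\nu\mapsto \int(\varphi-\varphi\circ\sigma)\,d\nu$ is weakly continuous and vanishes along the subsequence. There is no genuine obstacle here; the only step that might be overlooked is the commutation identity, whose validity rests entirely on $\sigma(ax)=x$ and which is precisely the structural fact that forces the normalized measure $h_{f_n}\,d\nu_{f_n}$ to be shift-invariant.
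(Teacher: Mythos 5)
Your proof is correct and follows the same route as the paper, which simply asserts that $h_{f_n}\nu_{f_n}$ is a $\sigma$-invariant probability measure and invokes weak compactness of $\mathscr{P}(\Omega,\mathscr{F})$; you have merely supplied the standard verification (the commutation identity $\mathscr{L}_{f_n}((\varphi\circ\sigma)h_{f_n})=\lambda_{f_n}\varphi h_{f_n}$ combined with duality) that the paper leaves as "well known."
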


\begin{proof}
It is well known that $h_{f_n}d\nu_{f_n}$
defines a $\sigma$-invariant Borel probability measure
and therefore any of its cluster point, in the weak topology
is a shift invariant probability measure.
\end{proof}

As observed in \cite{MR3377291}, when $M$ is uncountable, the
Kolmogorov-Sinai entropy is not suitable in the formulation of the variational problem.
In what follows we use the concept of entropy introduced in \cite{MR3377291}. This
entropy is defined for each probability measure $\mu$ by
\begin{align}\label{def-entropy-variacional}
h_{\mu}(\sigma)
\equiv
\inf_{g \in C^{\alpha}(\Omega,\mathbb{R})}
\left\lbrace - \int_{\Omega} g \  d\mu + \log \lambda_g \right\rbrace.
\end{align}

Note that this entropy depends on the choice of the a priori measure and
similar ideas are employed in Statistical Mechanics to study translation
invariant Gibbs measures of continuous spin systems on the lattice,
see \cite{MR2807681,MR1241537} and references therein.

\begin{theorem}[Equilibrium States]\label{teo-estados-equilibrio}
Let $f:\Omega\to\mathbb{R}$ be a continuous potential and
$(f_n)_{n\in\mathbb{N}}$ a sequence
of H\"older potentials such that $\|f_n-f\|_{\infty}\to 0$,
when $n\to\infty$. Then any probability measure $\mu_{f}$
as constructed in the Lemma \ref{lema-construcao-muA}
is an equilibrium state for $f$.
\end{theorem}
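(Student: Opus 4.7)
The plan is to sandwich $h(\mu_f)+\int f\,d\mu_f$ between $\log\lambda_f$ and itself, using the H\"older approximations in two different ways. For H\"older $f_n$ the full variational principle is available (see \cite{MR3377291}), so $h(\mu_{f_n})+\int f_n\,d\mu_{f_n}=\log\lambda_{f_n}$. The upper bound $\sup_\rho\{h(\rho)+\int f\,d\rho\}\leq \log\lambda_f$ will fall out of the definition \eqref{def-entropy-variacional} together with the Lipschitz continuity of the pressure (Proposition \ref{prop-pressao-lip}). The matching lower bound $h(\mu_f)+\int f\,d\mu_f\geq\log\lambda_f$ will come from letting $n\to\infty$ in the H\"older identity and invoking the upper semicontinuity of the functional $h$ in the weak topology on $\mathscr{P}(\Omega,\mathscr{F})$.

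\textbf{Upper bound.} For any $\rho\in\mathscr{P}_\sigma(\Omega,\mathscr{F})$ and any $g\in C^{\alpha}(\Omega)$, definition \eqref{def-entropy-variacional} gives $h(\rho)\leq -\int g\,d\rho+\log\lambda_g$, hence
\[
h(\rho)+\int f\,d\rho\leq \log\lambda_g+\int(f-g)\,d\rho\leq \log\lambda_g+\|f-g\|_\infty.
\]
Proposition \ref{prop-pressao-lip} gives $\log\lambda_g\leq\log\lambda_f+\|f-g\|_\infty$, whence $h(\rho)+\int f\,d\rho\leq\log\lambda_f+2\|f-g\|_\infty$. Since H\"older functions are uniformly dense in $C(\Omega)$, letting $\|f-g\|_\infty\to 0$ yields $h(\rho)+\int f\,d\rho\leq\log\lambda_f$ for every $\sigma$-invariant $\rho$.

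\textbf{Lower bound via semicontinuity.} For each fixed $g\in C^{\alpha}(\Omega)$ the map $F_g(\mu):=-\int g\,d\mu+\log\lambda_g$ is weakly continuous in $\mu$, so $h=\inf_g F_g$ is upper semicontinuous on $\mathscr{P}(\Omega,\mathscr{F})$. Taking a subsequence along which $\mu_{f_n}\rightharpoonup\mu_f$, the standard estimate
\[
\Bigl|\int f_n\,d\mu_{f_n}-\int f\,d\mu_f\Bigr|\leq \|f_n-f\|_\infty+\Bigl|\int f\,d(\mu_{f_n}-\mu_f)\Bigr|
\]
combined with Corollary \ref{cor-converg-lambdan} forces
\[
\lim_{n\to\infty}h(\mu_{f_n})=\lim_{n\to\infty}\Bigl[\log\lambda_{f_n}-\int f_n\,d\mu_{f_n}\Bigr]=\log\lambda_f-\int f\,d\mu_f.
\]
Upper semicontinuity then gives $h(\mu_f)\geq \limsup_n h(\mu_{f_n})=\log\lambda_f-\int f\,d\mu_f$, which is the matching bound.

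\textbf{Conclusion and main obstacle.} Combining the two inequalities yields $h(\mu_f)+\int f\,d\mu_f=\log\lambda_f=\sup_\rho\{h(\rho)+\int f\,d\rho\}$, so $\mu_f$ is an equilibrium state (and as a byproduct $P=p$). The one external input is the H\"older identity $h(\mu_{f_n})+\int f_n\,d\mu_{f_n}=\log\lambda_{f_n}$; the trivial direction follows from \eqref{def-entropy-variacional} by choosing $g=f_n$, but the nontrivial direction is the variational principle in the setting of a general compact $M$ equipped with an \emph{a priori} measure, which I would simply cite from \cite{MR3377291}. This citation is the delicate step: everything else is a one-shot semicontinuity plus uniform-convergence manipulation made possible by the fact that defining entropy as an \emph{infimum} of weakly continuous functionals forces upper semicontinuity, so weak limits of approximating equilibrium states can only gain entropy, never lose it.
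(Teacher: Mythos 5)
Your proof is correct and follows essentially the same route as the paper's: both arguments rest on the variational principle for the H\"older approximations $f_n$ (cited from \cite{MR3377291}) and pass to the limit using the upper semicontinuity of the entropy \eqref{def-entropy-variacional} together with the weak convergence $\mu_{f_n}\rightharpoonup\mu_f$ and $\|f_n-f\|_{\infty}\to 0$. The only organizational difference is that you pivot both inequalities around $\log\lambda_f$ (using Proposition \ref{prop-pressao-lip} for the upper bound and thereby obtaining the identity $P(f)=\log\lambda_f$ of the subsequent corollary as a byproduct), whereas the paper compares the supremum directly with $h(\mu_f)+\int_{\Omega}f\,d\mu_f$ and derives that corollary afterwards.
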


\begin{proof}
Given any $\varepsilon>0$ there is $n_0\in\mathbb{N}$ so that
if $n\geq n_0$ then $-\varepsilon<f-f_n<\varepsilon $.
We know that the equilibrium measure $\mu_{f_n}$ for $f_n$ is given by
$\mu_{f_n}=h_{f_n}\nu_{f_n}$ and therefore, we have
that
\begin{align*}
\sup_{\rho\in\mathscr{P}_{\sigma}(\Omega,\mathscr{F})}
\left\{ h(\rho) + \int_{\Omega} f\, d\rho \right\}
&=
\sup_{\rho\in\mathscr{P}_{\sigma}(\Omega,\mathscr{F})}
\left\{ h(\rho) + \int_{\Omega} (f-f_n)\, d\rho +\int_{\Omega} f_n\, d\rho \right\}
\\
&<
\varepsilon+
\sup_{\rho\in\mathscr{P}_{\sigma}(\Omega,\mathscr{F})}
\left\{ h(\rho) +\int_{\Omega} f_n\, d\rho \right\}
\\
&=
\varepsilon+
h(\mu_{f_n}) +\int_{\Omega} f_n\, d\mu_{f_n}.
\end{align*}
Since the entropy defined by \eqref{def-entropy-variacional} is upper semi-continuous
and $\mu_{f_n}\rightharpoonup \mu_{f}$ it follows that
for some $n_1\in\mathbb{N}$ and $n\geq n_1$ we have
\[
h(\mu_{f_n})<h(\mu_{f})+\varepsilon.
\]
Using again the uniform convergence of $f_n$ to $f$
and the weak convergence of $\mu_{f_n}$ to $\mu_{f}$, for
some $n_2\in\mathbb{N}$ and $n\geq n_2$ we get
\begin{align*}
\int_{\Omega} f_n\, d\mu_{f_n}
=
\int_{\Omega} f_n-f\, d\mu_{f_n}
+
\int_{\Omega} f\, d\mu_{f_n}
<
2\varepsilon
+
\int_{\Omega} f \, d\mu_{f}.
\end{align*}
Using the previous three inequalities we get
for $n\geq \max\{n_0,n_1,n_2\}$
\begin{align*}
\sup_{\rho\in\mathscr{P}_{\sigma}(\Omega,\mathscr{F})}
\left\{ h(\rho) + \int_{\Omega} f\, d\rho \right\}
<
4\varepsilon+
h(\mu_f) +\int_{\Omega} f\, d\mu_{f}.
\end{align*}
Since $\varepsilon>0$ is arbitrary follows from the
definition of the supremum and above inequality that
\begin{align*}
\sup_{\rho\in\mathscr{P}_{\sigma}(\Omega,\mathscr{F})}
\left\{ h(\rho) + \int_{\Omega} f\, d\rho \right\}
=
h(\mu_f) +\int_{\Omega} f\, d\mu_{f}
\end{align*}
and therefore $\mu_f$ constructed in Lemma \ref{lema-construcao-muA}
is an equilibrium state.
\end{proof}

\begin{corollary}
For any continuous potential $f:\Omega\to\mathbb{R}$ we have that
\[
\log \lambda_{f} =
\sup_{\rho\in\mathscr{P}_{\sigma}(\Omega,\mathscr{F})}
\left\{ h(\rho) + \int_{\Omega} f\, d\rho \right\}.
\]
\end{corollary}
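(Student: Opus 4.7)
The plan is to prove the two inequalities separately, sandwiching the pressure between $\log\lambda_f$ and itself by passing to the uniform limit along a sequence of H\"older approximations $(f_n)_{n\in\mathbb{N}}$ with $\|f_n-f\|_{\infty}\to 0$ (which exists by Stone--Weierstrass, as already used throughout the paper).

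For the upper bound $\sup \leq \log\lambda_f$, I would rely exclusively on the variational definition \eqref{def-entropy-variacional} of the entropy. For any $\rho\in\mathscr{P}_{\sigma}(\Omega,\mathscr{F})$ and any H\"older $g\in C^{\alpha}(\Omega)$, that definition immediately gives $h(\rho)\leq -\int g\,d\rho+\log\lambda_g$, hence
\[
h(\rho)+\int f\,d\rho\;\leq\;\int(f-g)\,d\rho+\log\lambda_g\;\leq\;\|f-g\|_{\infty}+\log\lambda_g.
\]
Choosing $g=f_n$ and letting $n\to\infty$, Corollary \ref{cor-converg-lambdan} together with $\|f-f_n\|_{\infty}\to 0$ yields $h(\rho)+\int f\,d\rho\leq \log\lambda_f$; taking the supremum over $\rho$ gives the desired inequality.

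For the lower bound $\sup\geq\log\lambda_f$, I would invoke Theorem \ref{teo-estados-equilibrio}: the probability $\mu_f$ produced by Lemma \ref{lema-construcao-muA} is an equilibrium state for $f$, so the supremum equals $h(\mu_f)+\int f\,d\mu_f$. For each H\"older approximation $f_n$, the classical variational principle for H\"older potentials (as established in \cite{MR3377291}) gives the exact identity
\[
\log\lambda_{f_n}\;=\;h(\mu_{f_n})+\int f_n\,d\mu_{f_n},
\]
where $\mu_{f_n}=h_{f_n}\nu_{f_n}$. I would then pass to the limit using three ingredients simultaneously: Corollary \ref{cor-converg-lambdan} for $\log\lambda_{f_n}\to\log\lambda_f$; the bound $|\int f_n\,d\mu_{f_n}-\int f\,d\mu_f|\leq \|f_n-f\|_{\infty}+|\int f\,d\mu_{f_n}-\int f\,d\mu_f|$, which vanishes thanks to uniform convergence $f_n\to f$ and the weak convergence $\mu_{f_n}\rightharpoonup\mu_f$ from Lemma \ref{lema-construcao-muA}; and the upper semicontinuity of $h$ (already used in the proof of Theorem \ref{teo-estados-equilibrio}) giving $\limsup_n h(\mu_{f_n})\leq h(\mu_f)$. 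Combining these yields
\[
\log\lambda_f\;=\;\limsup_{n\to\infty}\!\Bigl[h(\mu_{f_n})+\!\int\! f_n\,d\mu_{f_n}\Bigr]\;\leq\; h(\mu_f)+\!\int\! f\,d\mu_f\;=\;\sup_{\rho}\Bigl\{h(\rho)+\!\int\! f\,d\rho\Bigr\}.
\]

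The main obstacle is the clean identification $\log\lambda_{f_n}=h(\mu_{f_n})+\int f_n\,d\mu_{f_n}$ for H\"older potentials with respect to the entropy defined by \eqref{def-entropy-variacional}. Once one accepts this as known input from \cite{MR3377291}, everything else is routine uniform-convergence and weak-convergence bookkeeping; in particular, upper semicontinuity of $h$ is cheap because $h$ is the infimum of the continuous affine functionals $\rho\mapsto -\int g\,d\rho+\log\lambda_g$. If one wished to avoid citing the H\"older variational principle one could instead argue directly by showing that for H\"older $f_n$ the inequality $h(\mu_{f_n})+\int f_n\,d\mu_{f_n}\geq \log\lambda_{f_n}$ follows from testing the definition of $h(\mu_{f_n})$ against $g=f_n$ after using that $\mu_{f_n}$ is the unique equilibrium state produced by the RPF theorem (Theorem \ref{teo-RPF-holder}).
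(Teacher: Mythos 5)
Your proof is correct and rests on the same key input as the paper's: the exact variational identity $\log\lambda_{f_n}=h(\mu_{f_n})+\int_{\Omega} f_n\,d\mu_{f_n}$ for H\"older potentials (from \cite{MR3377291}), combined with $\lambda_{f_n}\to\lambda_f$ via Corollary \ref{cor-converg-lambdan}. The bookkeeping differs on both sides, though. The paper sandwiches the supremum for $f$ between $\log\lambda_{f_n}-\varepsilon$ and $\log\lambda_{f_n}+\varepsilon$ by (i) plugging the single invariant measure $\mu_{f_n}$ directly into the supremum for $f$ (lower bound) and (ii) reusing the first inequality from the proof of Theorem \ref{teo-estados-equilibrio}, namely that $\mu_{f_n}$ attains the supremum for the H\"older potential $f_n$ (upper bound); neither step needs the limit measure $\mu_f$, upper semicontinuity of $h$, or weak convergence. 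Your upper bound is actually more economical than the paper's: testing the infimum in \eqref{def-entropy-variacional} against $g=f_n$ gives $h(\rho)+\int_{\Omega} f\,d\rho\leq\|f-f_n\|_{\infty}+\log\lambda_{f_n}$ for every $\rho$ simultaneously, with no reference to equilibrium states at all. Your lower bound is more roundabout: routing through $\mu_f$ via Theorem \ref{teo-estados-equilibrio}, upper semicontinuity of $h$ and the weak convergence $\mu_{f_n}\rightharpoonup\mu_f$ (along the subsequence of Lemma \ref{lema-construcao-muA}) does work, but the paper's observation that $\mu_{f_n}$ itself is an admissible competitor in the supremum for $f$, so that the supremum is at least $h(\mu_{f_n})+\int_{\Omega} f\,d\mu_{f_n}\geq\log\lambda_{f_n}-\varepsilon$, reaches the same conclusion without any of that machinery. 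Both routes are valid, and both ultimately hinge on accepting the H\"older variational principle for the entropy \eqref{def-entropy-variacional} as known input, exactly as the paper does.
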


\begin{proof}
Consider the H\"older approximations
$(f_n)_{n\in\mathbb{N}}$ of $f$ as above. Then
for any given $\varepsilon>0$ and $n$ large enough
we have
\begin{align*}
\log \lambda_{f_n}-\varepsilon
&=
h(\mu_{f_n}) +\int_{\Omega} f_n\, d\mu_{f_n}
-\varepsilon
\\
&<
\sup_{\rho\in\mathscr{P}_{\sigma}(\Omega,\mathscr{F})}
\left\{ h(\rho) + \int_{\Omega} f\, d\rho \right\}
\\
&<
\varepsilon+
h(\mu_{f_n}) +\int_{\Omega} f_n\, d\mu_{f_n}
\\
&=
\varepsilon +\log \lambda_{f_n}.
\end{align*}
Since $\lambda_{f_n}\to \lambda_{f}$ it follows from the
above inequality that
\[
\sup_{\rho\in\mathscr{P}_{\sigma}(\Omega,\mathscr{F})}
\left\{ h(\rho) + \int_{\Omega} f\, d\rho \right\}
=
\log\lambda_{f}.
\qedhere
\]
\end{proof}

\subsubsection*{Necessary and Sufficient Conditions for the Existence of $\pmb{L}^1$ Eigenfunctions}
\begin{theorem}
Let $\nu\in\mathscr{G}^*(f)$.
The Ruelle operator has a non-negative eigenfunction
$h\in L^1(\nu)$ if, and only if,
there exists
$\mu\in\mathscr{P}_{\sigma}(\Omega,\mathscr{F})$ such that
$\mu\ll \nu$.

\end{theorem}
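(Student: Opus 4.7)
The plan is to show each direction by an explicit construction, where the candidate object is in each case the natural one: a normalized eigenfunction times $\nu$ on one side, and a Radon--Nikodym derivative on the other. The whole argument rests on the identity
\[
\mathscr{L}_{f}\bigl((\varphi\circ\sigma)\,g\bigr)
\;=\;
\varphi\cdot \mathscr{L}_{f}(g),
\]
which is trivial for $g\in C(\Omega)$ from the definition \eqref{def-operador-de-ruelle}, and extends to $g\in L^{1}(\nu)$ by the density $\overline{C(\Omega)}^{L^{1}(\nu)}=L^{1}(\nu)$ together with the boundedness of $\mathscr{L}_{f}$ on $L^{1}(\nu)$ established in Proposition \ref{prop-ext-Lf-L1}; this extension step is the only mildly technical point but it is routine.

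For the direction ($\Rightarrow$), assume $h\in L^{1}(\nu)$ is non-negative and non-trivial with $\mathscr{L}_{f}h=\lambda_{f}h$. Since $h\geq 0$ and $h\not\equiv 0$ in $L^{1}(\nu)$, the quantity $Z:=\int h\,d\nu$ is strictly positive, so the formula $d\mu:=Z^{-1}\,h\,d\nu$ defines a Borel probability measure with $\mu\ll\nu$. To check $\sigma$-invariance, test against $\varphi\in C(\Omega)$: using $\mathscr{L}_{f}^{*}\nu=\lambda_{f}\nu$, the displayed operator identity, and the eigenequation for $h$,
\[
Z\int(\varphi\circ\sigma)\,d\mu
=\int(\varphi\circ\sigma)\,h\,d\nu
=\frac{1}{\lambda_{f}}\int\mathscr{L}_{f}\bigl((\varphi\circ\sigma)h\bigr)\,d\nu
=\frac{1}{\lambda_{f}}\int \varphi\,\mathscr{L}_{f}(h)\,d\nu
=\int\varphi\,h\,d\nu
=Z\int\varphi\,d\mu.
\]
This gives $\mu\in\mathscr{P}_{\sigma}(\Omega,\mathscr{F})$ with $\mu\ll\nu$.

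For the direction ($\Leftarrow$), assume $\mu\in\mathscr{P}_{\sigma}(\Omega,\mathscr{F})$ with $\mu\ll\nu$, and set $h:=d\mu/d\nu\in L^{1}(\nu)$, which is automatically non-negative. For any $\varphi\in C(\Omega)$, shift-invariance of $\mu$ reads $\int(\varphi\circ\sigma)h\,d\nu=\int\varphi\,h\,d\nu$. Running the same computation as above in reverse,
\[
\int\varphi\,h\,d\nu
=\int(\varphi\circ\sigma)\,h\,d\nu
=\frac{1}{\lambda_{f}}\int\mathscr{L}_{f}\bigl((\varphi\circ\sigma)h\bigr)\,d\nu
=\frac{1}{\lambda_{f}}\int\varphi\,\mathscr{L}_{f}(h)\,d\nu,
\]
so that $\int \varphi\bigl(\mathscr{L}_{f}(h)-\lambda_{f}h\bigr)\,d\nu=0$ for every $\varphi\in C(\Omega)$. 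Since $C(\Omega)$ is dense in $L^{\infty}(\nu)$ in an appropriate sense (or simply because continuous functions separate signed measures that are absolutely continuous with respect to $\nu$), this forces $\mathscr{L}_{f}(h)=\lambda_{f}h$ $\nu$-a.s., exhibiting the desired non-negative $L^{1}(\nu)$ eigenfunction.

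The only step that requires any care is the extension of the commutation identity $\mathscr{L}_{f}((\varphi\circ\sigma)g)=\varphi\cdot\mathscr{L}_{f}(g)$ from $g\in C(\Omega)$ to $g\in L^{1}(\nu)$; I would handle it by approximating $g$ in $L^{1}(\nu)$ by continuous functions $g_{n}$, noting that $(\varphi\circ\sigma)g_{n}\to(\varphi\circ\sigma)g$ and $\varphi\,\mathscr{L}_{f}(g_{n})\to\varphi\,\mathscr{L}_{f}(g)$ in $L^{1}(\nu)$ because $\varphi$ is bounded and $\mathscr{L}_{f}$ is bounded on $L^{1}(\nu)$, and then passing to the limit. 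Everything else is a bookkeeping check with the duality relation and the eigenequations.
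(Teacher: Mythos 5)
Your proposal is correct and follows essentially the same route as the paper: both directions rest on the identity $\mathscr{L}_{f}((\varphi\circ\sigma)g)=\varphi\,\mathscr{L}_{f}(g)$ combined with the duality $\int\mathscr{L}_{f}(g)\,d\nu=\lambda_{f}\int g\,d\nu$, taking $\mu=h\,\nu$ in one direction and $h=d\mu/d\nu$ in the other. You are in fact slightly more careful than the paper about the $L^{1}$-extension of the commutation identity and about deducing $\mathscr{L}_{f}(h)=\lambda_{f}h$ $\nu$-a.s.\ from the integral identity, but the argument is the same.
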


\begin{proof}
We first assume that there is $\mu\in\mathscr{P}_{\sigma}(\Omega,\mathscr{F})$
so that $\mu\ll \nu$. In this case we claim that
\[
\mathscr{L}_{f}\!\left(\frac{d\mu}{d\nu} \right)
=
\lambda_{f}\frac{d\mu}{d\nu}.
\]
Indeed, for any continuous function $\varphi$ we have
\begin{align*}
\int_{\Omega} \varphi \mathscr{L}_{f}\left( \frac{d\mu}{d\nu} \right)\, d\nu
&=
\int_{\Omega} \mathscr{L}_{f}\left(\varphi\circ\sigma \cdot \frac{d\mu}{d\nu}  \right)\, d\nu
\\
&=
\lambda_f\int_{\Omega}\varphi\circ\sigma\cdot \frac{d\mu}{d\nu}\, d\nu
=
\lambda_f\int_{\Omega}\varphi\circ\sigma\cdot\, d\mu
\\
&=
\lambda_f\int_{\Omega}\varphi\, d\mu
=
\lambda_f\int_{\Omega}\varphi \cdot \frac{d\mu}{d\nu}\, d\nu.
\end{align*}

Conversely, suppose that $h\in L^{1}(\nu)$ is a non-negative
eigenfunction for the Ruelle operator associated to the main
eigenvalue and normalized so that $\int_{\Omega} h\, d\nu=1$.
Define the probability measure $\mu =hd\nu$.
Then for any $\varphi\in C(\Omega)$ we have
\begin{align*}
\lambda_f \int_{\Omega} \varphi\, d\mu
&=
\lambda_f \int_{\Omega} \varphi h\, d\nu
=
\int_{\Omega} \varphi\, \mathscr{L}_fh \ d\nu
\\
&=
\int_{\Omega} \mathscr{L}_f(\varphi\circ\sigma \cdot h) \ d\nu
=
\lambda_{f}\int_{\Omega} \varphi\circ\sigma \cdot h \ d\nu
\\
&=
\lambda_{f}\int_{\Omega} \varphi\circ\sigma\, d\mu
\end{align*}
and therefore $\mu\in\mathscr{P}_{\sigma}(\Omega,\mathscr{F})$ and
$\mu\ll \nu$.
\end{proof}

\subsubsection*{Continuous Potentials not Having Continuous Eigenfunctions}

Now we assume that the state space $M=\{-1,1\}$ and the a priori measure
is a Bernoulli measure, which we denote by $\kappa$.
Let $\rho$ be the infinite product measure
$\rho=\prod_{i\in\mathbb{N}}\kappa$.
Consider the continuous potential $f$ given by
$f(x)=\sum_{n\geq 1} (x_n/n^{\gamma})$, where $3/2<\gamma\leq 2$.
For each $n\in\mathbb{N}$ set
$
\alpha_n = \zeta(\gamma)- \sum_{j=1}^{n} n^{-\gamma}.
$
From Theorem 5.1 in \cite{JLMS:JLMS12031}
we have that the main eigenvalue for $\mathscr{L}_{f}$ is
$\lambda_f=2\cosh(\zeta(\gamma))$ and there is a $\mathscr{F}$-measurable
set $\Omega_0\subset\Omega$
satisfying $\rho(\Omega_0)=1$ and such that
for all $x\in \Omega_0$ the following function
\[
x\mapsto h_f(x)\equiv
\exp(\alpha_1 x_1+ \alpha_2 \, x_2 + \alpha_3 x_3 +   \ldots+\,\alpha_n x_n\,+\ldots)
\]
is well defined.

From Theorem 6.1 item (iv) in \cite{JLMS:JLMS12031}
it follows that $h_f$ is the unique eigenfunction
associated to $\lambda_f$ and it is not an element of
$L^{\infty}(\Omega,\mathscr{F},\rho)$ which implies that $h_f\notin C(\Omega)$.
On the other hand, from Theorem 5.1 in \cite{JLMS:JLMS12031} follows
that  $h_f\in L^1(\Omega,\mathscr{F},\nu_f)$.

\section{Appendix}

On this appendix we adapt some results  from the reference \cite{MR2807681} to the present setting.
Let $\mathscr{L}_f $ be the Ruelle operator of a
continuous potential $f$ and  for each $n\in\mathbb{N}$,
$x\in\Omega$ and $E\in\mathscr{F}$, consider the probability kernel
$K_n:\mathscr{F}\times\Omega\to [0,1]$ given by the expression
$$
K_n(E,x)\equiv
\frac{\mathscr{L}_{f}^{n}(1_E)(\sigma^n(x)) }
{\mathscr{L}^{n}_{f}(1)(\sigma^n(x))}.
$$

\begin{proposition} \label{ap1} Suppose $\mu \in \mathscr{G}^{DLR}(f)$, then
for all $n\in\mathbb{N}$
\[
\mathscr{A}_n(\mu)
=
\{ E\in\mathscr{F}: K_n(E,\omega)= 1_E(\omega) \ \mu \ \mathrm{a.s.} \}
\quad \mathrm{is\ a}\  \sigma-\mathrm{algebra}.
\]

\end{proposition}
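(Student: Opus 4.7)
The plan is to first translate the membership condition defining $\mathscr{A}_n(\mu)$ into a statement about measurability modulo $\mu$-null sets, and then verify the three axioms of a $\sigma$-algebra using only standard properties of conditional expectations and symmetric differences.

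Since $\mu\in\mathscr{G}^{DLR}(f)$, the defining DLR equation gives for every $E\in\mathscr{F}$ the $\mu$-a.s.\ identity
\[
\mathbb{E}_{\mu}[1_E\,|\,\sigma^n\mathscr{F}](x)
\;=\;
\int_{\Omega} 1_E(y)\,dK_n(y,x)
\;=\;
K_n(E,x).
\]
Therefore $E\in\mathscr{A}_n(\mu)$ if and only if $1_E$ coincides $\mu$-a.s.\ with its conditional expectation given $\sigma^n\mathscr{F}$, that is, if and only if $1_E$ is $\mu$-a.s.\ equal to a $\sigma^n\mathscr{F}$-measurable function. Because $1_E$ takes only the values $0$ and $1$, this is equivalent to the existence of a set $\tilde E\in\sigma^n\mathscr{F}$ with $\mu(E\,\triangle\,\tilde E)=0$ (take $\tilde E=\{K_n(E,\cdot)=1\}$, which is $\sigma^n\mathscr{F}$-measurable). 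Conversely, if such $\tilde E$ exists, then by DLR $K_n(E,\cdot)=\mathbb{E}_{\mu}[1_E|\sigma^n\mathscr{F}]=\mathbb{E}_{\mu}[1_{\tilde E}|\sigma^n\mathscr{F}]=1_{\tilde E}=1_E$ $\mu$-a.s.

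With this reformulation in hand, the verification of the $\sigma$-algebra axioms becomes routine. First, $\Omega\in\mathscr{A}_n(\mu)$ since $K_n(\Omega,\cdot)\equiv 1=1_{\Omega}$. Second, if $E\in\mathscr{A}_n(\mu)$ with associated $\tilde E\in\sigma^n\mathscr{F}$, then $\tilde E^{c}\in\sigma^n\mathscr{F}$ and $\mu(E^{c}\triangle\tilde E^{c})=\mu(E\triangle\tilde E)=0$, so $E^{c}\in\mathscr{A}_n(\mu)$. Third, given a countable family $(E_k)_{k\in\mathbb{N}}\subset\mathscr{A}_n(\mu)$ with corresponding $\tilde E_k\in\sigma^n\mathscr{F}$, set $\tilde E=\bigcup_{k}\tilde E_k\in\sigma^n\mathscr{F}$; the sub-additivity bound
\[
\mu\!\left(\,\bigcup_{k} E_k\ \triangle\ \bigcup_{k}\tilde E_k\right)
\;\leq\;
\sum_{k}\mu(E_k\triangle\tilde E_k)
\;=\;
0
\]
shows that $\bigcup_k E_k\in\mathscr{A}_n(\mu)$.

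The only delicate point in the argument is the equivalence established in the first paragraph: one must be careful that the DLR property is used to pass between the pointwise condition $K_n(E,x)=1_E(x)$ and the abstract statement about $\mu$-a.s.\ $\sigma^n\mathscr{F}$-measurability. Once this is secured, the three axioms follow mechanically from the fact that $\sigma^n\mathscr{F}$ is itself a $\sigma$-algebra and that $\mu$-null sets are stable under countable unions; no further properties of the specific kernel $K_n$ (such as the Feller or quasilocal properties) are needed.
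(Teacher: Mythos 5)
Your proof is correct, but it follows a genuinely different route from the paper's. The paper verifies the axioms directly from properties of the kernel: countable additivity of $E\mapsto K_n(E,x)$ gives closure under countable disjoint unions, the identity $K_n(E^c,x)=1-K_n(E,x)$ gives closure under complements, and the only nontrivial step --- closure under finite intersections --- is handled by combining the monotonicity bound $K_n(E\cap F,x)\leq\min\{K_n(E,x),K_n(F,x)\}=1_{E\cap F}(x)$ ($\mu$-a.s.) with the integrated DLR identity $\mu(E\cap F)=\int_\Omega K_n(E\cap F,\cdot)\,d\mu$, so that a nonnegative integrand with zero integral must vanish a.s. You instead characterize $\mathscr{A}_n(\mu)$ outright as the family of sets $E$ that differ from some $\tilde E\in\sigma^n\mathscr{F}$ by a $\mu$-null set, after which all three axioms are immediate from the stability of null sets under countable unions. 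Your reformulation is shorter and more conceptual --- it in effect identifies $\mathscr{A}_n(\mu)$ with the trace on $\mathscr{F}$ of the $\mu$-completion of $\sigma^n\mathscr{F}$, anticipating Corollary \ref{ap5} of the appendix --- while the paper's argument stays closer to the bare kernel identities and never needs the conditional-expectation form of the DLR equations. Two small points to note: the paper's definition of $\mathscr{G}^{DLR}(f)$ is stated for continuous test functions, so your use of $\mathbb{E}_\mu[1_E\,|\,\sigma^n\mathscr{F}]=K_n(E,\cdot)$ for arbitrary $E\in\mathscr{F}$ requires the standard monotone-class extension (the paper relies on the same extension when it invokes $\mu(E)=\int_\Omega K_n(E,\cdot)\,d\mu$); and your choice $\tilde E=\{K_n(E,\cdot)=1\}$ uses the $\sigma^n\mathscr{F}$-measurability of $x\mapsto K_n(E,x)$, which the paper establishes when it introduces the kernels. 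Neither point is a gap.
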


\begin{proof}
Since $K_n(\Omega,\omega)=1=1_{\Omega}(\omega)$
we get that $\Omega\in A_n(\mu)$. For the empty set the proof is trivial.

Now suppose that $(E_j)_{j\in\mathbb{N}}$ is a disjoint collection
of elements of $\mathscr{A}_n(\mu)$. Then, for all $\omega$ we get
$K_n(\cup_{j\in\mathbb{N}}E_j,\omega)= \sum_{j\in\mathbb{N}} K_n(E_j,\omega)$.
Note that $\mu$-a.e.  $K_n(E_j,\omega) =1_{E_j}(\omega)$
for all $j\in\mathbb{N}$, because $E_j\in \mathscr{A}_n(\mu)$.
Clearly,
$1_{\cup_{j\in\mathbb{N}} E_j}(\omega) = \sum_{j\in\mathbb{N}} 1_{E_j}(\omega)$,
then by using that the intersection of sets of measure one has measure one,
we get that
$K_n(\cup_{j\in\mathbb{N}}E_j,\omega)=1_{\cup_{j\in\mathbb{N}} E_j}(\omega)$,
$\mu$-a.e..

Note that $\mathscr{A}_n(\mu) $ is closed by the complement operation.
Indeed,
for all $\omega\in\Omega$ and $E\in \mathscr{A}_n(\mu)$
we have that
$K_{n}(E^c,\omega)=1-K_n(E,\omega)=1-1_{E}(\omega)=1_{E^c}(\omega)$.

Since we have shown that $\mathscr{A}_n(\mu)$ is closed under
denumerable disjoint unions then the remaining task is
to show that $\mathscr{A}_n(\mu)$ is closed under finite intersections.
Then it will follow that $\mathscr{A}_n(\mu)$ is closed
under any denumerable union.
Suppose that $E,F\in \mathscr{A}_n(\mu)$.
By the monotonicity of the measure
we have $\mu$-a.e that
\begin{align*}
K_n(E\cap F,\omega)
&\leq
\min\{ K_n(E,\omega),K_n(F,\omega) \}
\\
&=
\min\{ 1_{E}(\omega),1_{F}(\omega) \}
\\
&=
1_{E\cap F}(\omega).
\end{align*}
By using the hypothesis we get that
\begin{align*}
\int_{\Omega} [1_{E\cap F}- K_n(E\cap F,\cdot)]\, d\mu
&=
\int_{\Omega} 1_{E\cap F}\, d\mu
-
\int_{\Omega} K_n(E\cap F,\cdot)\, d\mu
\\
&=
\mu(E\cap F)
-
\int_{\Omega} K_n(E\cap F,\cdot)\, d\mu
\\
&=
\mu(E\cap F)
-
\mu(E\cap F)
\\
&=
0.
\end{align*}
From the previous inequality we known that the integrand
in the left hand side of the above is non-negative.
So it has to be zero $\mu$-a.e..
Therefore, $K_n(E\cap F,\omega)=1_{E\cap F}(\omega)$, $\mu$-a.e..
and finally we get that $\mathscr{A}_n(\mu)$ is closed for
finite intersections.
Therefore,
$\mathscr{A}_n(\mu)$ is a $\sigma$-algebra.
\end{proof}

\begin{proposition} \label{ap2}
Given a function $g:\Omega\to [0,\infty)$ we get the equivalence:
\medskip

1- $\int_{\Omega} K_n(E,\cdot )g\, d\mu  = \int_{\Omega} 1_{E} g\, d\mu$
for all  $E\in\mathscr{F}$,
\medskip

2- The function $g$ is measurable with respect to the sigma-algebra $\mathscr{A}_n(\mu)$.
\end{proposition}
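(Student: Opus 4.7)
The plan is to pass through the DLR characterization of $K_n$ as a regular version of the conditional expectation and thereby reduce the stated equivalence to the classical fact that a non-negative $g$ coincides $\mu$-a.s.\ with its conditional expectation on $\sigma^n\mathscr{F}$ if and only if it is $\sigma^n\mathscr{F}$-measurable modulo $\mu$-null sets. I would first settle the case of bounded $g$ and then extend to general non-negative $g$ by monotone convergence applied to the truncations $g_N := g\wedge N$.

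The preliminary and crucial step is to identify $\mathscr{A}_n(\mu)$ with the $\mu$-completion of $\sigma^n\mathscr{F}$. Since $\mu \in \mathscr{G}^{DLR}(f)$, the defining property of a DLR measure says that $x \mapsto K_n(E,x)$ is a version of $\mathbb{E}_\mu[1_E\,|\,\sigma^n\mathscr{F}]$ for every $E\in\mathscr{F}$. Hence $E \in \mathscr{A}_n(\mu)$ iff $1_E = \mathbb{E}_\mu[1_E\,|\,\sigma^n\mathscr{F}]$ $\mu$-a.s., iff $1_E$ agrees $\mu$-a.s.\ with a $\sigma^n\mathscr{F}$-measurable function. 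Lifting this through simple-function approximation (Proposition \ref{ap1} guarantees that $\mathscr{A}_n(\mu)$ is a $\sigma$-algebra, so the lift is legitimate), a non-negative $g$ is $\mathscr{A}_n(\mu)$-measurable iff $g = \mathbb{E}_\mu[g\,|\,\sigma^n\mathscr{F}]$ $\mu$-a.s.

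Both directions then follow from a single computation. For arbitrary $E \in \mathscr{F}$, invoking in turn the DLR identification, the pull-out property applied to the $\sigma^n\mathscr{F}$-measurable factor $\mathbb{E}_\mu[1_E\,|\,\sigma^n\mathscr{F}]$, and the defining identity of the latter tested against the $\sigma^n\mathscr{F}$-measurable function $\mathbb{E}_\mu[g\,|\,\sigma^n\mathscr{F}]$, one obtains
\[
\int_{\Omega} K_n(E,\cdot)\, g\, d\mu
= \int_{\Omega} \mathbb{E}_\mu[1_E\,|\,\sigma^n\mathscr{F}]\, \mathbb{E}_\mu[g\,|\,\sigma^n\mathscr{F}]\, d\mu
= \int_{\Omega} 1_E\, \mathbb{E}_\mu[g\,|\,\sigma^n\mathscr{F}]\, d\mu.
\]
Consequently, condition (1) is equivalent to $\int_\Omega 1_E\, (g - \mathbb{E}_\mu[g\,|\,\sigma^n\mathscr{F}])\, d\mu = 0$ for every $E\in\mathscr{F}$, which forces $g = \mathbb{E}_\mu[g\,|\,\sigma^n\mathscr{F}]$ $\mu$-a.s., i.e.\ condition (2) by the preliminary step.

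The main technical obstacle is the absence of any integrability hypothesis on $g$: the integrals above may a priori be infinite and the chain of identities must be justified term by term. I would circumvent this by first proving the equivalence for bounded $g$ (where all the manipulations are standard) and then observing that $g$ is $\mathscr{A}_n(\mu)$-measurable iff every truncation $g_N = g\wedge N$ is, while condition (1) is monotone-stable under $g_N \uparrow g$; monotone convergence in both integrals then transfers the bounded statement to the general non-negative case.
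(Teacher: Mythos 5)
Your route is genuinely different from the paper's. The paper gives a bare-hands argument in the style of Georgii's Theorem 7.6: for $1\Rightarrow 2$ it shows directly that every level set $\{g\geq c\}$ lies in $\mathscr{A}_n(\mu)$, via a chain of integral inequalities using only condition (1), the bound $K_n(\cdot,x)\leq 1$ and the pointwise inequality $1_{\{g\geq c\}}\,g\geq c\,1_{\{g\geq c\}}$; for $2\Rightarrow 1$ it verifies the identity for $g=1_F$ with $F\in\mathscr{A}_n(\mu)$ by splitting $E$ into $E\cap F$ and $E\cap F^c$, then extends by linearity and monotone convergence. You instead translate everything into the language of conditional expectations, using that for $\mu\in\mathscr{G}^{DLR}(f)$ the kernel $K_n(E,\cdot)$ is a version of $\mathbb{E}_\mu[1_E\,|\,\sigma^n\mathscr{F}]$ and that $\mathscr{A}_n(\mu)$ consists of the sets agreeing $\mu$-a.s.\ with $\sigma^n\mathscr{F}$-sets. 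That identification is correct (it is essentially Corollary \ref{ap5} transposed from the tail field to a single $n$), and your central identity $\int_\Omega K_n(E,\cdot)g\,d\mu=\int_\Omega 1_E\,\mathbb{E}_\mu[g\,|\,\sigma^n\mathscr{F}]\,d\mu$ does reduce the proposition to the classical characterization of measurability mod null sets. Two debts you should pay explicitly: the paper's definition of $\mathscr{G}^{DLR}(f)$ states the conditional-expectation property only for continuous test functions, so a monotone class argument (together with properness of the kernels) is needed to get it for all indicators $1_E$; and the passage from ``all level sets of $g$ are in $\mathscr{A}_n(\mu)$'' to ``$g$ agrees a.s.\ with a $\sigma^n\mathscr{F}$-measurable function'' requires gluing the exceptional null sets over a countable dense set of levels.

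The genuine gap is in your treatment of non-integrable $g$. Your reduction to bounded functions transfers only one direction: if $g$ is $\mathscr{A}_n(\mu)$-measurable then so is each truncation $g_N=g\wedge N$, condition (1) holds for each $g_N$ by the bounded case, and monotone convergence yields (1) for $g$ --- that part is fine. But for $1\Rightarrow 2$ you would need condition (1) for $g$ to imply condition (1) for each $g_N$, and nothing in the hypothesis gives this; truncation does not commute with the integral identity. The repair is to run your main computation directly for unbounded $g$, using the $[0,\infty]$-valued conditional expectation $\mathbb{E}_\mu[g\,|\,\sigma^n\mathscr{F}]$ (both the pull-out property and the defining identity hold for non-negative extended-valued functions by monotone convergence), and then to avoid the illegal subtraction in $\int_\Omega 1_E\bigl(g-\mathbb{E}_\mu[g\,|\,\sigma^n\mathscr{F}]\bigr)\,d\mu$: test the resulting identity $\int_E\mathbb{E}_\mu[g\,|\,\sigma^n\mathscr{F}]\,d\mu=\int_E g\,d\mu$ on the sets $E\cap\{g\leq M\}\cap\{\mathbb{E}_\mu[g\,|\,\sigma^n\mathscr{F}]\leq M\}$, where both sides are finite, conclude a.s.\ equality there, and let $M\to\infty$ (the set where the conditional expectation is infinite is $\mu$-null because $g$ is finite-valued and the two set functions agree on all of $\mathscr{F}$). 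With that modification your argument closes.
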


\begin{remark}
In \cite{MR2807681} the condition 1 is denoted by
$(g\mu)K_n=g\mu$, where  $g\mu$ is the measure defined by
$E\mapsto \int_{\Omega} 1_E\, g\, d\mu$.
This condition is equivalent to say that
$g\mu$ is compatible with $K_n$.
\end{remark}
\begin{proof}
First we will prove that $\mathit{1}\, \Longrightarrow \mathit{2}$.
This follows from the following claim:
for all $g:\Omega\to [0,\infty)$ for which the condition
\textit{1} holds, we have $\{g\geq c\}\in \mathscr{A}_n(\mu)$,
for any $c\in\mathbb{R}$. Indeed, the identity
$1_{\{g<c\}}=1-1_{\{g\geq c \}}$  implies
\begin{multline*}
\int_{ \{g<c\} } K_n( 1_{\{g\geq c\}},\omega) g(\omega)\, d\mu(\omega)
\\[0.3cm]
=
\int_{ \Omega } K_n( 1_{\{g\geq c\}},\omega) g(\omega)\, d\mu(\omega)
-
\int_{ \Omega } 1_{\{g\geq c\}}(\omega) g(\omega) K_n( 1_{\{g\geq c\}},\omega)
\, d\mu(\omega).
\end{multline*}
By using the condition \textit{1} in the first expression of rhs we get
\begin{multline*}
\int_{ \{g<c\} } K_n( 1_{\{g\geq c\}},\omega) g(\omega)\, d\mu(\omega)
\\[0.3cm]
=
\int_{ \Omega } 1_{\{g\geq c\}}(\omega) g(\omega)\, d\mu(\omega)
-
\int_{ \Omega } 1_{\{g\geq c\}}(\omega) g(\omega) K_n( 1_{\{g\geq c\}},\omega)
\, d\mu(\omega)
\\[0.3cm]
=
\int_{ \Omega } 1_{\{g\geq c\}}(\omega) g(\omega) (1-K_n( 1_{\{g\geq c\}},\omega))
\, d\mu(\omega).
\end{multline*}

Now, we will use  the two inequalities
$1_{\{g\geq c\}}(\omega) g(\omega)\geq c\cdot 1_{\{g\geq c\}}(\omega)$
and $K_n( 1_{\{g\geq c\}},\omega)\leq 1$, in the above expression,
to get
\begin{align*}
\int_{ \{g<c\} }& K_n( 1_{\{g\geq c\}},\omega) g(\omega)\, d\mu(\omega)
\\[0.3cm]
&=
\int_{ \Omega } 1_{\{g\geq c\}}(\omega) g(\omega) (1-K_n( 1_{\{g\geq c\}},\omega))
\, d\mu(\omega)
\\[0.3cm]
&\geq
c\int_{ \Omega } 1_{\{g\geq c\}}(\omega) (1-K_n( 1_{\{g\geq c\}},\omega))
\, d\mu(\omega)
\\[0.3cm]
&=
c\int_{ \Omega } 1_{\{g\geq c\}}(\omega) \, d\mu(\omega)
-
c\int_{ \Omega } 1_{\{g\geq c\}}(\omega)K_n( 1_{\{g\geq c\}},\omega) \, d\mu(\omega)
\\[0.3cm]
&\underset{=}{{\scriptstyle(cond\ 1)}}\ \
c\int_{ \Omega } K_n( 1_{\{g\geq c\}},\omega) \, d\mu(\omega)
-
c\int_{ \Omega } 1_{\{g\geq c\}}(\omega)K_n( 1_{\{g\geq c\}},\omega) \, d\mu(\omega)
\\[0.3cm]
&\underset{=}{ {\scriptstyle (1_{\{g<c\}}=1-1_{\{g\leq c\}})} }\ \
c\int_{ \{g<c\} } K_n( 1_{\{g\geq c\}},\omega) \, d\mu(\omega).
\end{align*}
Now, the two extremes of the above inequality give us
\[
\int_{ \{g<c\} } (g-c)\, K_n( 1_{\{g\geq c\}},\omega)\ d\mu(\omega)\geq 0.
\]
Therefore,
$
1_{\{g<c\}}(\omega) K_n( 1_{\{g\geq c\}},\omega) = 0 \
$
$\mu$-a.e.. From this follows that
\begin{align*}
K_n( 1_{\{g\geq c\}},\omega)
&=
1_{\{g\geq c\}}(\omega)\ K_n( 1_{\{g\geq c\}},\omega)
+
1_{\{g< c\}}(\omega)\ K_n( 1_{\{g\geq c\}},\omega)
\\
&=
1_{\{g\geq c\}}(\omega)\ K_n( 1_{\{g\geq c\}},\omega)
\\
&\leq
1_{\{g\geq c\}}(\omega).
\end{align*}
By another application of the condition \textit{1} we get
\[
\int_{\Omega} 1_{\{g\geq c\}}(\omega)- K_n( 1_{\{g\geq c\}},\omega)\, d\mu =0
\]
and, then from the last inequality we obtain the  $\mu$-a.e.
equality
$1_{\{g\geq c\}}(\omega)= K_n( 1_{\{g\geq c\}},\omega)$.
This means that $\{g\geq c\}\in \mathscr{A}_n(\mu)$
and so $g$ is $\mathscr{A}_n(\mu)$-mensurable.

\medskip

Now we will show that $2\, \Longrightarrow\, 1$.
Suppose  $g$ is $\mathscr{A}_n(\mu)$-mensurable.
First we will show that $2\, \Longrightarrow\, 1$
holds when $g=1_{F}$, for some $F\in \mathscr{A}_n(\mu)$.
To prove this claim, it only remains to verify that
\begin{equation}\label{prova-func-indicadora}
\int_{\Omega} 1_F\cdot K_n(E,\cdot)\, d\mu
=
\int_{\Omega} 1_F\cdot 1_{E}\, d\mu,
\qquad \forall \ E\in\mathscr{F}.
\end{equation}
Note that for any $ E\in\mathscr{F}$ we have
\begin{align*}
\int_{\Omega} 1_F\cdot K_n(E,\cdot)\, d\mu
&=
\int_{\Omega} 1_F\cdot K_n(E\cap F,\cdot)\, d\mu
+
\int_{\Omega} 1_F\cdot K_n(E\cap F^c,\cdot)\, d\mu
\\
&\leq
\int_{\Omega} K_n(E\cap F,\cdot)\, d\mu
+
\int_{\Omega} 1_F\cdot K_n(F^c,\cdot)\, d\mu
\\
&\hspace*{-0.5cm}\underset{=}{\scriptstyle(\mathrm{Hip. \,on\, K_n})}
\int_{\Omega} 1_{E\cap F}\, d\mu
+
\int_{\Omega} 1_F\cdot K_n(F^c,\cdot)\, d\mu
\\
&\hspace*{-0.5cm}\underset{=}{\scriptstyle(F\in \mathscr{A}_n(\mu))}
\int_{\Omega} 1_{E\cap F}\, d\mu
+
\int_{\Omega} 1_F\cdot 1_{F^c} \, d\mu
\\
&=
\int_{\Omega} 1_{E}\cdot 1_{F}\, d\mu.
\end{align*}

By a similar argument we can show that
\begin{align*}
\int_{\Omega} 1_{F}\cdot K_n(E^c,\cdot)\, d\mu
\leq
\int_{\Omega} 1_{E^c}\cdot 1_{F}\, d\mu.
\end{align*}
Since
\begin{align*}
\int_{\Omega} 1_{F}\cdot K_n(E,\cdot)\, d\mu
+
\int_{\Omega} 1_{F}\cdot K_n(E^c,\cdot)\, d\mu
&=
\mu(F)
\\[0.3cm]
&=
\int_{\Omega} 1_{F}\cdot 1_{E}\, d\mu
+
\int_{\Omega} 1_{F}\cdot 1_{E^c}\, d\mu
\end{align*}
it follows from the two last inequalities that
\[
\int_{\Omega} 1_F\cdot K_n(E,\cdot)\, d\mu
=
\int_{\Omega} 1_F\cdot 1_{E}\, d\mu,
\qquad \forall \ E\in\mathscr{F}.
\]
The above identity extends by linearity for simple functions.
By taking a sequence of simple functions $\varphi_k \uparrow f$,
and using the monotone convergence theorem we get for any
$\mathscr{A}_n(\mu)$-measurable function $g$ that
\[
\int_{\Omega} g\cdot K_n(E,\cdot)\, d\mu
=
\int_{\Omega} g\cdot 1_{E}\, d\mu,
\qquad \forall \ E\in\mathscr{F}.
\]

\end{proof}

\medskip
It follows from last proposition that if $g$ is measurable with respect to the sigma-algebra $\mathscr{A}_n(\mu)$ for all $n\in \mathbb{N}$, and $\mu\in \mathscr{G}^{DLR}(f)$, $f$ continuous, then $g\mu$ is also in $\mathscr{G}^{DLR}(f)$

\begin{corollary} \label{ap4}

Given $\mu\in \mathscr{G}^{DLR} (f)$ define
$\mathscr{A}(\mu) \equiv \bigcap_{n\in\mathbb{N}} \mathscr{A}_n(\mu)$.
Then,  $\mu$ is extreme in $\mathscr{G}^{DLR}(f)$, if an only if,
$\mu$ is trivial on  $\mathscr{A}(\mu)$.

\end{corollary}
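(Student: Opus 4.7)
My plan is to prove the two implications by using Proposition \ref{ap2} as the bridge between measurability with respect to $\mathscr{A}(\mu)=\cap_{n}\mathscr{A}_{n}(\mu)$ and compatibility of (sub-probability) measures of the form $g\mu$ with the kernels $(K_{n})_{n\in\mathbb{N}}$.

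For the direction \emph{extreme} $\Rightarrow$ \emph{trivial on $\mathscr{A}(\mu)$}, I would argue by contrapositive. Suppose there exists $A\in \mathscr{A}(\mu)$ with $0<\mu(A)<1$. Then $1_A$ is $\mathscr{A}_n(\mu)$-measurable for every $n$, so by the implication $2\Rightarrow 1$ of Proposition \ref{ap2} we have $\int K_n(E,\cdot)\,1_A\,d\mu=\int 1_E\,1_A\,d\mu$ for all $E\in\mathscr{F}$. Dividing by $\mu(A)$ shows that $\mu_1\equiv \mu(\,\cdot\,|A)\in\mathscr{G}^{DLR}(f)$, and by the same reasoning applied to $A^c$, $\mu_2\equiv \mu(\,\cdot\,|A^c)\in\mathscr{G}^{DLR}(f)$. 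Since $\mu=\mu(A)\mu_1+\mu(A^c)\mu_2$ with $\mu_1\neq\mu\neq\mu_2$ (as $\mu_1(A)=1$ while $\mu(A)<1$), $\mu$ is not extreme.

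For the reverse direction \emph{trivial on $\mathscr{A}(\mu)$} $\Rightarrow$ \emph{extreme}, suppose $\mu=\alpha\mu_1+(1-\alpha)\mu_2$ with $\mu_1,\mu_2\in\mathscr{G}^{DLR}(f)$ and $\alpha\in(0,1)$. Then $\alpha\mu_1\ll\mu$, so Radon--Nikodym gives a non-negative density $g=d(\alpha\mu_1)/d\mu$ with $0\leq g\leq 1$. Because $\mu_1$ is DLR, $\alpha\mu_1$ is compatible with every $K_n$, i.e., $(g\mu)K_n=g\mu$; this is precisely condition $1$ of Proposition \ref{ap2}, so by the implication $1\Rightarrow 2$ the function $g$ is $\mathscr{A}_n(\mu)$-measurable for every $n$, hence $\mathscr{A}(\mu)$-measurable. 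Since $\mu$ is trivial on $\mathscr{A}(\mu)$, $g$ equals some constant $c$ $\mu$-a.s.; integrating the identity $g\mu=\alpha\mu_1$ yields $c=\alpha$, whence $\mu_1=\mu$. The same argument forces $\mu_2=\mu$, proving extremality.

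The proof is essentially a direct packaging of Proposition \ref{ap2}, so there is no serious obstacle. The only point that requires a little care is the first direction, where one must verify that the conditional probabilities $\mu(\,\cdot\,|A)$ and $\mu(\,\cdot\,|A^c)$ really do lie in $\mathscr{G}^{DLR}(f)$; this is handled by invoking the $2\Rightarrow 1$ implication of Proposition \ref{ap2} applied to $g=1_A$ and $g=1_{A^c}$, after which the DLR compatibility for $\mu_1$ and $\mu_2$ is just normalization of the identity $\int K_n(E,\cdot)\,1_A\,d\mu=\mu(E\cap A)$.
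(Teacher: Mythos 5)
Your proof is correct and follows essentially the same route as the paper: both directions hinge on the equivalence in Proposition \ref{ap2}, with the conditional measures $\mu(\cdot|A)$, $\mu(\cdot|A^c)$ giving a nontrivial decomposition in one direction, and the Radon--Nikodym density of a component being $\mathscr{A}(\mu)$-measurable (hence a.s.\ constant by triviality) in the other. No issues to report.
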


\begin{proof}
Suppose that there exists $F\in\mathscr{A}(\mu)$ such that $0<\mu(F)<1$
and consider the following probability measures
\[
\begin{array}{c}
\displaystyle\mathscr{F}\ni E\mapsto \nu(E) = \mu(E|F) = \int_{\Omega} \frac{1}{\mu(F)}1_{E}1_{F}\, d\mu,
\\[0.5cm]
\displaystyle\mathscr{F}\ni E\mapsto \gamma(E) = \mu(E|F^c) = \int_{\Omega} \frac{1}{\mu(F^c)}1_{E}1_{F^c}\, d\mu.
\end{array}
\]
Clearly $\nu\neq \gamma$ and moreover
\begin{equation}\label{comb-convexa}\mu  = \mu(F)\nu +(1-\mu(F))\gamma.\end{equation}

The last proposition guarantees that both $\nu$ and $\gamma$
belong to $\mathscr{G}^{DLR}(f)$.
Indeed,  in last proposition take $f$
as $(1/\mu(F))\cdot 1_{F}$ and  $(1/\mu(F^c))\cdot 1_{F^c}$, respectively (these functions are
$\mathscr{A}_n(\mu)$-measurable for $n\in\mathbb{N}$).
However the existence of the non trivial convex combination \eqref{comb-convexa},
of two elements in $\mathscr{G}^{DLR}(f)$, is a contradiction.
Therefore, any set $F\in\mathscr{A}(\mu)$ has the $\mu$ measure
zero or one.

Conversely, suppose that $\mu$ is trivial on $\mathscr{A}(\mu)$
and at same time expressible as $\mu = \lambda\nu +(1-\lambda)\gamma$,
with $0<\lambda<1$ and $\nu,\gamma\in \mathscr{G}(f)$.

Note that  $\nu \ll \mu$ and then from
Radon-Nikodym Theorem we get that
$\nu(E) = \int_{\Omega} 1_{E}f\, d\mu$  for some measurable function $f\geq 0$. Once more by the equivalence
$1\, \Longleftrightarrow\, 2$
we get that  $f$ is $\mathscr{A}_n(\mu)$-measurable for all $n\in\mathbb{N}$
(recall that $\nu\in\mathscr{G}^{DLR}(f)$).
Since we assumed that $\mu$ is trivial on $\mathscr{A}(\mu)$
we get that both integrals below are always equals to each other
being zero or one
\[
\int_{\Omega} 1_{F}f\, d\mu = \int_{\Omega} 1_F d\mu,
\qquad \forall F\in \mathscr{A}(\mu).
\]

As the equality is valid for all $F\in \mathscr{A}(\mu)$
and  $f$ is $\mathscr{A}_n(\mu)$-measurable
we can conclude that $f=1$ $\mu$-a.e..
Therefore, $\mu=\nu$ and consequently $\gamma=\mu$.
So $\mu$ is a extreme point of $\mathscr{G}^{DLR}(f) $.
\end{proof}

\medskip

It follows from last corollary that if $\mathscr{G}^{DLR}(f)$
has only one element  $\mu$, then $\mu$ is trivial on  $\mathscr{A}(\mu)$.
If there is phase transition,
in the sense that the cardinality of $\mathscr{G}^{DLR}(f)$ is bigger than one,
then any extreme probability measure $\mu$
in $\mathscr{G}^{DLR}(f)$ is trivial on  $\mathscr{A}(\mu)$.

In the next proposition we show the relationship between
$\mathscr{A}(\mu)$ and $\mathscr{T}\equiv \bigcap_{n\in\mathbb{N}}\mathscr{T}_{n}$,
for $\mu\in \mathscr{G}^{DLR}$.

\begin{corollary} \label{ap5}
If $\mu\in\mathscr{G}^{DLR}(f)$
then $\mathscr{A}(\mu)$ is a  $\mu$ completion of
$\mathscr{T}$.
In particular, it follows from last corollary that if
$\mu\in\mathscr{G}^{DLR}(f)$ is extreme, then, it is trivial on
$\mathscr{T}$.
\end{corollary}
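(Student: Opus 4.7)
The plan is to prove two inclusions (one exactly, one up to $\mu$-null sets) between $\mathscr{A}(\mu)$ and $\cap_{j\in\mathbb{N}}\sigma^{j}(\mathscr{F})$; the second statement then follows immediately from Corollary~\ref{ap4}.

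For the easy inclusion $\cap_{j}\sigma^{j}(\mathscr{F})\subseteq \mathscr{A}(\mu)$, I would invoke the properness of the kernels $K_{n}$ noted right after \eqref{def-Kn-ruelle}: if $E\in\sigma^{n}(\mathscr{F})$ then $1_{E}$ depends only on the coordinates indexed by $\{n+1,n+2,\ldots\}$, so a direct computation with \eqref{def-Kn-ruelle} gives $K_{n}(E,x)=1_{E}(x)$ for \emph{every} $x\in\Omega$, hence $E\in\mathscr{A}_{n}(\mu)$ regardless of $\mu$. Intersecting over $n$ yields $E\in\mathscr{A}(\mu)$.

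The content of the corollary is the reverse inclusion modulo $\mu$-null sets, and this is where the DLR hypothesis and the Backward Martingale Convergence Theorem enter. Let $E\in\mathscr{A}(\mu)$. By definition of $\mathscr{G}^{DLR}(f)$, for each $n\in\mathbb{N}$ one has the $\mu$-a.s.\ identity
\[
\mathbb{E}_{\mu}[1_{E}\,|\,\sigma^{n}\mathscr{F}]\,=\,K_{n}(E,\cdot)\,=\,1_{E},
\]
where the second equality holds $\mu$-a.s.\ because $E\in\mathscr{A}_{n}(\mu)$. Passing $n\to\infty$ and applying the Backward Martingale Convergence Theorem (stated before Theorem~\ref{teo-B}) produces
\[
1_{E}\,=\,\mathbb{E}_{\mu}\!\left[1_{E}\,\big|\,\bigcap_{j\in\mathbb{N}}\sigma^{j}\mathscr{F}\right]\quad \mu\text{-a.s.}
\]
Since the right-hand side admits a version which is $\bigcap_{j}\sigma^{j}\mathscr{F}$-measurable, one can take $E'\equiv\{x:\mathbb{E}_{\mu}[1_{E}\,|\,\cap_{j}\sigma^{j}\mathscr{F}](x)=1\}\in\bigcap_{j}\sigma^{j}\mathscr{F}$ and check $\mu(E\triangle E')=0$, proving that $\mathscr{A}(\mu)$ is contained in the $\mu$-completion of the tail sigma-algebra. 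Combining both inclusions gives the first assertion.

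For the second assertion, suppose $\mu\in\mathscr{G}^{DLR}(f)$ is extreme. Corollary~\ref{ap4} says $\mu$ is trivial on $\mathscr{A}(\mu)$, and since $\bigcap_{j}\sigma^{j}\mathscr{F}\subseteq \mathscr{A}(\mu)$ by the first inclusion, $\mu$ is a fortiori trivial on the tail sigma-algebra. The main (mild) obstacle in the whole argument is bookkeeping the distinction between the $\mu$-a.s.\ identity $K_{n}(E,\cdot)=1_{E}$ and the pointwise properness used in the easy direction, and making sure the Backward Martingale Theorem is applied with the conditioning sigma-algebras in the correct order; once these are set straight, the argument is essentially immediate.
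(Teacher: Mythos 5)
Your proof is correct, and its skeleton (exact inclusion of the tail sigma-algebra into $\mathscr{A}(\mu)$ via properness of the kernels, plus the reverse inclusion modulo $\mu$-null sets, then Corollary~\ref{ap4} for the triviality claim) matches the paper's. The one genuine difference is in how you produce a tail-measurable representative of a set $E\in\mathscr{A}(\mu)$: you run $1_E=K_n(E,\cdot)=\mathbb{E}_\mu[1_E\,|\,\sigma^n\mathscr{F}]$ through the Backward Martingale Convergence Theorem and take $E'=\{\mathbb{E}_\mu[1_E\,|\,\cap_j\sigma^j\mathscr{F}]=1\}$, whereas the paper works entirely at the level of sets, defining $B=\bigcap_{n}\bigcup_{m\geq n}\{x:K_m(E,x)=1\}$ and observing that each $\{K_m(E,\cdot)=1\}$ lies in $\sigma^m(\mathscr{F})$ (so $B$ is a tail set) while $1_B=\limsup_m 1_{\{K_m(E,\cdot)=1\}}=1_E$ $\mu$-a.e. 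The paper's construction is more elementary — it needs only the $\sigma^m(\mathscr{F})$-measurability of $x\mapsto K_m(E,x)$ and avoids martingale convergence altogether — while your route is conceptually cleaner and reuses a theorem the paper has already stated; both are complete, and your version does require the (standard, but worth stating) step that the DLR condition $\mu(E)=\int_\Omega K_n(E,\cdot)\,d\mu$ together with properness identifies $K_n(E,\cdot)$ as a version of $\mathbb{E}_\mu[1_E\,|\,\sigma^n\mathscr{F}]$ for arbitrary $E\in\mathscr{F}$, not just for continuous test functions.
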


\begin{proof}
For all  $n\in\mathbb{N}$ we have that $K_n$ is a proper kernel.
Therefore, for any set
$F\in \mathscr{T}$
we get that
$K_n(F,\omega)=1_{F}(\omega)$.

On the other hand,
if
$F\in \{ E\in\mathscr{F}: K_n(E,\omega)
=1_{E}(\omega), \ \forall n\in\mathbb{N},\ \forall \omega\in\Omega \}$,
then,
$F=\{\omega\in\Omega: K_n(F,\omega)=1\}\in \mathscr{T}_{n}$.
Therefore,
$F\in\mathscr{T}$.
Consider $\mu\in\mathscr{G}^{DLR}(f)$ and let
$F\in \mathscr{A}(\mu)$, then,
\[
B = \bigcap_{n\in\mathbb{N}}\ \bigcup _{m\geq n} \{\omega\in\Omega: K_m(F,\omega)=1 \}
\]
is an element on the sigma algebra
$\mathscr{T}$ and moreover, $\mu(F\Delta B)=0$, because
\[
1_{B}
=
\limsup_{n\to\infty} 1_{ \{\omega\in\Omega: K_n(F,\omega)=1 \}  }
=
\limsup_{n\to\infty} 1_{F}
=
1_{F}
\qquad \mu\ a.e..
\qedhere\]
\end{proof}

\section*{Acknowledgments}
This study was financed in part by the Coordena\c  c\~ao de
Aperfei\c coamento de Pessoal de N\'ivel Superior - Brasil (CAPES) - Finance Code 001.
L. Cioletti and A. O. Lopes would like to acknowledge financial support by CNPq
and FAP-DF, and M. Stadlbauer acknowledges support by CNPq. The authors also would like to thank Evgeny Verbistkiy for pointing out an error in the proof
of Lemma 3 in the previous version of this manuscript and  Manfred Denker
for very helpful conversations during the process of writing this paper.

\end{document}